\documentclass{amsart}

\usepackage{amssymb, amsmath}
\usepackage{mathrsfs}
\usepackage{amscd}
\usepackage{verbatim}
\usepackage{enumerate}
\usepackage{todonotes}

\allowdisplaybreaks



\usepackage[colorlinks,linkcolor={blue},citecolor={blue},urlcolor={red},]{hyperref}



\theoremstyle{plain}
\newtheorem{theorem}{Theorem}[section]
\theoremstyle{remark}
\newtheorem{remark}[theorem]{Remark}

\theoremstyle{plain}
\newtheorem{corollary}[theorem]{Corollary}
\newtheorem{lemma}[theorem]{Lemma}
\newtheorem{proposition}[theorem]{Proposition}
\newtheorem{definition}[theorem]{Definition}

\newtheorem{assumption}[theorem]{Assumption}

\numberwithin{equation}{section}



\def\N{{\mathbb N}}

\def\R{{\mathbb R}}
\def\C{{\mathbb C}}


\newcommand{\E}{{\mathbb E}}
\renewcommand{\P}{{\mathbb P}}

\newcommand{\B}{{\mathcal B}}

\newcommand{\F}{{\mathcal F}}
\newcommand{\calF}{{\mathcal F}}
\newcommand{\calA}{{\mathcal A}}


\newcommand{\g}{\gamma}

\renewcommand{\O}{\Omega}



\renewcommand{\Re}{\hbox{\rm Re}\,}

\newcommand{\calL}{{\mathcal L}}
\newcommand{\n}{\Vert}
\newcommand{\one}{{{\bf 1}}}
\newcommand{\embed}{\hookrightarrow}

\newcommand{\lb}{\langle}
\newcommand{\rb}{\rangle}

\newcommand{\wh}{\widehat}
\newcommand{\supp}{\text{\rm supp\,}}

\newcommand{\dom}{\mathcal{O}}
\newcommand {\ud}{d}

\def\Xint#1{\mathchoice
{\XXint\displaystyle\textstyle{#1}}%
{\XXint\textstyle\scriptstyle{#1}}%
{\XXint\scriptstyle\scriptscriptstyle{#1}}%
{\XXint\scriptscriptstyle\scriptscriptstyle{#1}}%
\!\int}
\def\XXint#1#2#3{{\setbox0=\hbox{$#1{#2#3}{\int}$ }
\vcenter{\hbox{$#2#3$ }}\kern-.6\wd0}}

\def\dashint{\Xint-}
\newcommand{\fint}{\dashint}

\newcommand{\wt}{\widetilde}
\renewcommand{\tilde}{\widetilde}
\newcommand{\DMR}{{\rm DMR}}
\newcommand{\SMR}{{\rm SMR}}
\renewcommand{\MR}{{\rm MR}}

\newcommand{\nnn}{|\!|\!|}
\newcommand {\Schw}{\mathcal{S}}

\begin{document}

\author{Pierre Portal}
\address{Australian National University\\ Mathematical Sciences Institute\\ Hannah Neumann Building 145, Canberra ACT 0200\\ Australia \\ and Universit\'e Lille 1 \\ Laboratoire Paul Painlev\'e \\
F-59655 Villeneuve d'Ascq \\ France.} \email{Pierre.Portal@anu.edu.au}

\author{Mark Veraar}
\address{Delft Institute of Applied Mathematics\\
Delft University of Technology \\ P.O. Box 5031\\ 2600 GA Delft\\The
Netherlands} \email{M.C.Veraar@tudelft.nl}

\thanks{The first author is supported by the Future Fellowship FT130100607
of the Australian Research Council.
The second author is supported by the VIDI subsidy 639.032.427 of the Netherlands Organisation for Scientific Research (NWO)}

\date\today

\title[Stochastic maximal regularity]{Stochastic maximal regularity for rough time-dependent problems}

\begin{abstract}
We unify and extend the semigroup and the PDE approaches to stochastic maximal regularity of time-dependent semilinear parabolic problems with noise given by a cylindrical Brownian motion. We treat random coefficients that are only progressively measurable in the time variable. For $2m$-th order systems with $VMO$ regularity in space, we obtain $L^{p}(L^{q})$ estimates for all $p>2$ and $q\geq 2$, leading to optimal space-time regularity results. For second order systems with continuous coefficients in space, we also include a first order linear term, under a stochastic parabolicity condition, and obtain $L^{p}(L^{p})$ estimates together with optimal space-time regularity. For linear second order equations in divergence form with random coefficients that are merely measurable in both space and time, we obtain estimates in the tent spaces $T^{p,2}_{\sigma}$ of Coifman-Meyer-Stein. This is done in the deterministic case under no extra assumption, and in the stochastic case under the assumption that the coefficients are divergence free.
\end{abstract}

\keywords{stochastic PDEs, maximal regularity, VMO coefficients, measurable coefficients, higher order equations, Sobolev spaces, $A_p$-weights}

\subjclass[2010]{Primary: 60H15, Secondary: 35B65, 42B37, 47D06.}

\maketitle

\setcounter{tocdepth}{1}
\tableofcontents

\section{Introduction}

On $X_0$ (typically  $X_0= L^r(\dom;\C^N)$ where $r\in [2, \infty)$), we consider the following stochastic evolution equation:
\begin{equation}\label{eq:SEEintro}
\left\{\begin{aligned}
dU(t)  +  A(t) U(t)\ud t & =  F(t,U(t)) \ud t  +  \big(B(t) U(t)  +G(t,U(t))\big) \ud W_H(t),\\
 U(0) & = u_0,
\end{aligned}
\right.
\end{equation}
where $H$ is a Hilbert space, $W_{H}$ a cylindrical Brownian motion, $A:\R_+\times\O\to \calL(X_1, X_0)$ (for some Banach space $X_{1}$ such that $X_1\hookrightarrow X_0$, typically a Sobolev space) and $B:\R_+\times \O\to \calL(X_{1}, \gamma(H,X_{\frac12}))$ are progressively measurable (and satisfy a suitable stochastic parabolic estimate), the functions $F$ and $G$ are suitable nonlinearities, and the initial value $u_0:\O\to X_{0}$ is $\F_0$-measurable (see Section \ref{sec:MR} for precise definitions). We are interested in {\em maximal $L^p$-regularity} results for \eqref{eq:SEEintro}. This means that we want to investigate well-posedness together with sharp $L^p$-regularity estimates given the data $F,G$ and $u_0$.

Knowing these sharp regularity results for equations such as \eqref{eq:SEEintro}, gives a priori estimates to nonlinear equations involving suitable nonlinearities $F(t,U(t))\ud t$ and $G(t,U(t))\ud W_H(t)$. Well-posedness of such non-linear equations follows easily from these a priori estimates (see e.g.\ the proofs in \cite{NVWsiam}).

\subsection{Deterministic maximal regularity}
In deterministic parabolic PDE, maximal regularity is routinely used without identifying it as a specific property. It is traditionally established by showing that the kernel of the semigroup is a standard Calder\'on-Zygmund kernel as a function of space and time (see e.g.\ \cite{LSU} for a general theory and \cite{kt} for a quintessential example).
As a property of abstract evolution equations, maximal regularity also has a long history. A turning point, that can be seen as the starting point for the methods used in this paper, was reached in \cite{We}. He obtained a characterisation of maximal regularity in the time-independent case:
the deterministic problem
$$
U'(t) +AU(t)= f(t).
$$
Under the assumption that the underlying space $X_0$ is UMD, he showed that $A$ has maximal $L^p$-regularity if and only if $A$ is $R$-sectorial.

In the time-dependent case, maximal regularity is far less understood.
For abstract evolution equations, it has been established under regularity assumptions in time: continuity when $D(A(t))$ is constant (see \cite{PrSch} and the references therein), and H\"older regularity when $D(A(t))$ varies (see \cite{ps}). For concrete PDE with very general boundary conditions, it has been established under continuity assumptions on the coefficients in \cite{DHP} and this was extended to equations with VMO coefficients in time and space in \cite{DongGal}. For equations with Dirichlet boundary conditions one can obtain maximal regularity when the coefficients satisfy a VMO condition in space and are measurable in time, see \cite{DK11, DKnew, Kry2008} and references therein.

In Section \ref{sec:2morder} we apply the results of \cite{DKnew} to obtain an $L^p(L^q)$-theory for higher order systems. In Section \ref{sec:secondorder} we consider second order systems and we use a more classical technique: we obtain an $L^p(L^q)$ in the space independent case first, then for $p=q$ we use standard localization arguments to reach the space dependent case under minimal regularity assumptions in the spatial variable.

Treating fully rough coefficients (merely bounded and measurable in both space and time) seems to be much more difficult, despite the fact that it was already understood in $L^{2}(\R_{+};L^{2}(\R^{d}))$ in the 1950's. The form method developed by J.L. Lions and his school allows one, in that case, to prove the following variant of maximal regularity:
\begin{equation*}
\|u\|_{W^{1,2}(I;W^{-1,2}(\R^{d}))} + \|u\|_{L^{2}(I;W^{1,2}(\R^{d}))}\leq C\|f\|_{L^{2}(I;W^{-1,2}(\R^{d}))}.
\end{equation*}
In Section \ref{sec:tent}, we initiate the development of Lions's maximal regularity theory in appropriate analogues of Lions's energy space $L^{2}(I;W^{1,2}(\R^{d}))$. These spaces are the tent spaces $T^{p,2}$ introduced in \cite{CMS}, and extensively used in harmonic analysis of PDE with rough coefficients (see e.g. \cite{as, aa, hkmp} and the references therein). It was discovered in \cite{AMP15} that Lions's well posedness theory for time-dependent divergence form parabolic problems with $L^{\infty}(\R_{+}\times \R^{d})$ coefficients can be extended to tent spaces. Here we start extending the corresponding maximal regularity, both in the deterministic and in the stochastic setting.\\

For deterministic equations maximal $L^p$-regularity can be used to obtain a local existence theory for quasilinear PDEs of parabolic type (see \cite{CleLi, Pruss02, PruSimbook}). Moreover, it can sometimes be used to derive global existence for semilinear equations (see \cite{Pierre, Pruss02}). In \cite{KPW} maximal $L^p$-regularity was used to study long time behavior of solutions to quasilinear equations. In \cite{PruSimWil} it was used to study critical spaces of initial values for which the quasilinear equation is well-posed.

At the moment it remains unclear which of the mentioned theories have a suitable version for stochastic evolution equations. In this paper we develop a maximal $L^p$-regularity theory for \eqref{eq:SEEintro} which extends several of existing known theories. In future works we plan to study consequence for concrete nonlinear SPDEs. In the next subsection we explain some of the known results, and then compare them to what is proved in the current paper.

\subsection{SPDEs of second order}\label{subs:Krylovapproach}
For {\em second order elliptic operators} $A$ on $\R^d$ in non-divergence form, this theory was first developed by Krylov in a series of papers \cite{Kry94a,Kry,Kry96,Kry00} and was surveyed in \cite{Kry06,KryOverview}. These works have been very influential. In particular they have led to e.g.\ \cite{Du2018,Kimildoo15,Kim04a,Kim04b,Kim05,Kim09,KimKimquasi2,KimKimKim13,KryLot} where also the case of smooth domains has been considered, and later to e.g.\  \cite{CioicaKimLeeLindner13,CioicaKimLeeLindner18,CioicaKimLee18,Kim14nonsmooth,Lindner14} where the case of non-smooth domains is investigated. In the above mentioned results one uses $L^p$-integrability in space, time and $\Omega$. In \cite{Kim09,Kry00} $p\neq q$ is allowed but only if $q\leq p$.

The above mentioned papers mostly deal with second order operators of scalar equations. In the deterministic setting higher order systems are considered as well (see e.g.\ \cite{DK11, DKnew, GV, GV2, Kry2008}). In the stochastic case some $L^p$-theory for second order systems has been developed in \cite{KimLeesystems,MiRo01} and an $L^p(\Omega;L^2)$-theory in \cite{DuLiuZhang}, but in the last mentioned paper the main contribution is a $C^{\alpha}$-theory.

\subsection{The role of the $H^\infty$-calculus assumptions}\label{ssubs:Hinftyapproach}
In \cite{NVW12a,NVWsiam,NVW15} together with van Neerven and Weis, the second author has found another approach to maximal $L^p$-regularity of SPDEs, based on McIntosh's $H^\infty$-calculus and square function estimates (see e.g.\ \cite{CDMY,Haase:2,KW,KWcalc,McI}). This allows one to obtain maximal $L^p$-regularity for \eqref{eq:SEEintro} for any sectorial operator $A$ on $L^q$-spaces ($q\geq2$) with a bounded $H^\infty$-calculus of angle $<\pi/2$. There is a vast literature with examples of operators with a bounded $H^\infty$-calculus (see \cite[Section 10.8]{HNVW2} for a more complete list):
\begin{itemize}
\item \cite{DDHPV}: $2m$-th order elliptic systems with general boundary conditions and smooth coefficients
\item \cite{DuongYan}: second order elliptic equations with VMO coefficients.
\item \cite{E18} second order elliptic systems in divergence form on bounded Lipschitz domains,
with $L^{\infty}$ coefficients and mixed boundary conditions.
\item \cite{KuWe17} Stokes operator on a Lipschitz domain
\item \cite{LinVerLp}: Dirichlet Laplace operator on $C^2$-domains with weights
\item \cite{MM16} Hodge Laplacian and Stokes operator with Hodge boundary conditions on very weakly Lipschitz domains
\end{itemize}
One advantage of the above approach is that it leads to an $L^p(\Omega\times(0,T);L^q)$-theory for all $p\in (2, \infty)$ and $q\in [2, \infty)$ (where in case $q=2$, the case $p=2$ is included), and gives optimal space-time regularity results such as $U\in L^p(\Omega;H^{\theta,p}(0,T;X_{1-\theta}))$ or even $U\in L^p(\Omega;C([0,T];X_{1-\frac1p,p}))$, where we used complex and real interpolation space respectively. Such results seem unavailable in the approach of Subsection \ref{subs:Krylovapproach}.

\subsection{New results}
Until now the approach based on functional calculus techniques was limited to equations where $A$ was independent of time and $\Omega$ (or continuous in time see \cite{NVWsiam}). We will give a simple method to also treat the case where the coefficients of the differential operator $A$ only depend on time and $\Omega$ in a progressive measurable way. The method is inspired by \cite[Lemma 5.1]{Kry09VMO} and \cite{KimLeesystems} where it is used to reduce to the case of second order equations with constant coefficients. \\

Our paper extends and unifies the theories in \cite{Kry} and \cite{NVW12a} in several ways. Moreover, we introduce weights in time in order to be able to treat rough initial values. In the deterministic setting weight in time have been used for this purpose in \cite{PRSimweight}. In the stochastic case some result in this direction have been presented in \cite{AndJentzenKu}, but not in a maximal regularity setting. Furthermore, we initiate a Lions's type stochastic maximal regularity theory outside of Hilbert spaces, based on the $L^2$ theory (see \cite{L},  \cite{Par2}, \cite{KR79}, \cite{LiuRock}), \cite{Rozov}, \cite{AMP15}, and \cite{anp}. Our main abstract results can be found in Theorems \ref{thm:timedepSMR} and \ref{thm:nonlinearperturbation} below. Our result in the Lions's setting is Theorem \ref{thm:tent}.\\

Additionally we are able to give an abstract formulation of the stochastic parabolicity condition for $A$ and $B$ (see Section \ref{subs:B=0}). It coincides with the classical one if $A$ is a scalar second order operator on $\R^d$ and $B$ consists of first order operators.

In the applications of our abstract results we will only consider equations on the full space $\R^d$, but in principle other situations can be considered as well. However, in order to include an operator $B$ which satisfies an optimal abstract stochastic parabolicity condition, we require certain special group generation structure.

The concrete SPDEs we consider are
\begin{itemize}
\item $2m$-th order elliptic systems in non-divergence form with coefficients which are only progressively measurable (see Theorem \ref{thm:2msystemxind}). The main novelties are that, in space, the coefficients are assumed to be VMO, and we are able to give an $L^p(L^q)$-theory for all $p\in (2, \infty)$ and $q\in [2, \infty)$ ($p=q=2$ is allowed as well).
\item Second order elliptic systems in non-divergence form with coefficients which are only progressively measurable, with a diffusion coefficient that satisfies an optimal stochastic parabolicity condition (see Theorem \ref{thm:secondorderxind}). When the coefficients are independent of space we give an $L^p(L^q)$-theory. Moreover, we give an $L^p(L^p)$-theory if the coefficients are continuous in space.
\item
Second order divergence form equations with coefficients which are only progressively measurable in both the time and the space variables, but satisfy the structural condition of being divergence free. We treat this problem in suitable tent spaces, and in the model case where $B=0$, $u_{0}=0$.
\end{itemize}
A major advantage of our approach to the $L^p(L^q)$-theory, is that we can obtain the same space-time regularity results as in Section \ref{ssubs:Hinftyapproach}. This seems completely new in the case of measurable dependence on $(t,\omega)$.
Our approach to stochastic maximal regularity in the Lions sense is nowhere as developed, but gives, to the best of our knowledge, the first results (outside of Hilbert spaces) where no regularity in either space and time is assumed.

\subsection{Other forms of maximal regularity}
To end this introduction let us mention several other type of maximal $L^p$-regularity results. In \cite{Brz1,DPL} maximal $L^p$-regularity for any analytic semigroup was established in the real interpolation scale. In \cite{vNVWgamma} maximal regularity was obtained using $\gamma$-spaces. In Banach function spaces variations of the latter have been obtained in \cite{Antoni}.

\subsubsection*{Notation}

We write $A \lesssim_p B$ whenever $A \leq C_p B$ where $C_p$ is a constant which depends on the parameter $p$. Similarly, we write $A\eqsim_p B$ if $A\lesssim_p B$ and $B\lesssim_p A$. Moreover, $C$ is a constant which can vary from line to line.

\subsubsection*{Acknowledgements}

This work was started during a visit of Veraar at the Australian National University in 2016, and concluded during a visit of Portal at the TU Delft in 2018. The authors would like to express their gratitude to these institutions for providing an excellent environment for their collaboration. We would like to thank Antonio Agresti for carefully reading the manuscript.

\section{Preliminaries}

\subsection{Measurability\label{subsec:meas}}

Let $(S,\Sigma, \mu)$ be a measure space. A function $f:S\to X$ is called {\em strongly measurable} if it can be approximated by $\mu$-simple functions a.e.
An operator valued function $f:S\to \calL(X,Y)$ is called $X$-strongly measurable if for every $x\in X$, $s\mapsto f(s) x$ is strongly measurable.

Let $(\Omega, \P, \calA)$ be a probability space with filtration $(\calF_t)_{t\geq 0}$. A process $\phi:\R_+\times\O\to X$ is called {\em progressively measurable} if for every fixed $T\geq 0$, $\phi$ restricted to $[0,T]\times\O$ is strongly $\B([0,T])\times \F_T$-measurable

An operator valued process $\phi:\R_+\times\O\to \calL(X,Y)$ will be called {\em $X$-strongly progressively measurable} if for every $x\in X$, $\phi x$ is progressively measurable.

Let $\bigtriangleup:=\{(s,t): 0\leq s\leq t<\infty\}$ and $\bigtriangleup_T = \bigtriangleup\cap [0,T]^2$. Let $\B_T$ denotes the Borel $\sigma$-algebra on $\bigtriangleup_T$.
A two-parameter process $\phi:\bigtriangleup\times\O\to X$ will be called {\em progressively measurable} if for every fixed $T\geq 0$,
$\phi$ restricted to $\bigtriangleup_T\times\O$ is strongly $\B_T\times \F_T$-measurable.

\subsection{Functional calculus}
For $\sigma\in (0,\pi)$ let $\Sigma_{\sigma} = \{z\in \C: |\arg(z)|<\sigma\}$.
A closed and densely defined operator $(A,D(A))$ on a Banach space $X$ is called {\em sectorial of type $(M,\sigma)\in \R_+\times (0,\pi)$} if $A$  is injective, has dense range, $\sigma(A)\subseteq \overline{\Sigma_{\sigma}}$ and
\begin{align*}
\|\lambda R(\lambda,A)\|\leq M, \qquad  \ \ \lambda\in \C\setminus \Sigma_{\sigma}.
\end{align*}
A closed and densely defined operator $(A,D(A))$ on a Banach space $X$ is called {\em sectorial of type $(M,w,\sigma)\in \R_+\times\R\times (0,\pi)$} if $A+w$ is sectorial of type $(M, \sigma)$.

Let $H^\infty(\Sigma_{\varphi})$ denote the space of all bounded holomorphic functions $f:\Sigma_{\varphi}\to \C$ and let $\|f\|_{H^\infty(\Sigma_{\varphi})} = \sup_{z\in \Sigma_{\varphi}} |f(z)|$. Let $H^\infty_0(\Sigma_{\varphi})\subseteq H^\infty(\Sigma_{\varphi})$  be the set of all $f$ for which there exists an $\varepsilon>0$ and $C>0$ such that $|f(z)|\leq C \frac{|z|^{\varepsilon}}{1+|z|^{2\varepsilon}}$.

For an operator $A$ which is sectorial of type $(M,\sigma)$, $\sigma<\nu<\varphi$, and $f\in H^\infty_0(\Sigma_{\varphi})$ define
\[f(A) = \frac{1}{2\pi i} \int_{\partial \Sigma_{\nu}} f(\lambda) R(\lambda,A) \ud \lambda,\]
where the orientation is such that $\sigma(A)$ is on the right side of the integration path.
The operator $A$ is said to have a {\em bounded $H^\infty$-calculus} of angle $\varphi$ if there exists a constant $C$ such that for all $f\in H^\infty_0(\Sigma_{\varphi})$
\[\|f(A)\|\leq C\|f\|_{H^\infty(\Sigma_{\varphi})}.\]
For details on the $H^\infty$-functional calculus we refer the reader to \cite{Haase:2} and \cite{HNVW2}. A list of examples has been given in the introduction.

For an interpolation couple $(X_0,X_1)$ let
\[X_{\theta} = [X_0, X_1]_{\theta}, \ \  \text{and} \ \ X_{\theta,p} = [X_0, X_1]_{\theta,p}\]
denote the complex and real interpolation spaces at $\theta\in (0,1)$ and $p\in [1, \infty]$, respectively.

\subsection{Function spaces}

Let $S\subseteq \R^d$ be open. For a weight function $w:\R^d\to (0,\infty)$ which is integrable on bounded subset of $\R^d$, $p \in [1,\infty)$, and $X$ a Banach space, we work with the Bochner spaces $L^{p}(S,w;X)$ with norm defined by
$$
\|u\|_{L^{p}(S,w;X)} ^{p} = \int \limits _{S} \|u(t)\|_{X} ^{p} w(t)\ud t,
$$
We also use the corresponding Sobolev spaces defined by
$$\|u\|_{W^{1,p}(S,w;X)} ^{p}= \|u\|_{L^{p}(S,w;X)} ^{p} + \|u'\|_{L^{p}(S,w;X)} ^{p}.
$$

If $q<p$,  and $w_{\alpha}(x) = |x|^{\alpha}$ with $\alpha/d <\frac{p}{q}-1$, note that, by H\"older inequality $L^{p}(S,w_{\alpha};X)\hookrightarrow L^q(S;X)$.

In several cases the class of weight we will consider is the class of $A_p$-weights $w:\R^d\to (0,\infty)$. Recall that $w\in A_p$ if and only if the Hardy--Littlewood maximal function is bounded on $L^p(\R^d,w)$.

For $p\in (1, \infty)$ and an $A_p$-weight $w$ let the Bessel potential spaces $H^{s,p}(\R^d,w;X)$ be defined as the space of all $f\in \Schw'(\R^d;X):=\calL(\Schw(\R^d),X)$ for which $\F^{-1} [(1+|\cdot|^2)^{s/2} \wh{f} \in L^p(\R^d,w;X)$. Here $\F$ denotes the Fourier transform. Then $H^{s,p}(\R^d,w;X)$ is a Banach space when equipped with the norm
\[\|f\|_{H^{s,p}(\R^d,w;X)} = \|\F^{-1} [(1+|\cdot|^2)^{s/2} \wh{f} ]\|_{L^p(\R^d,w;X)}.\]
The following is a well known consequence of Fourier multiplier theory.
\begin{lemma}\label{lem:equivalentnormsH}
Let $X$ be a UMD Banach space, $p\in (1, \infty)$, $s\in \R$, $r>0$ and $k\in \N$.
Then the following give equivalent norms on $H^{s,p}(\R^d;X)$:
\begin{align*}
& \|(-\Delta)^{r/2} u\|_{H^{s-r,p}(\R^d;X)} + \|u\|_{H^{s-r,p}(\R^d;X)},
\\ &\sum_{|\alpha| = k} \|\partial^{\alpha} u\|_{H^{s-k,p}(\R^d;X)} + \|u\|_{H^{s-k,p}(\R^d;X)}.
\end{align*}
\end{lemma}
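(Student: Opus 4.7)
The plan is to deduce both equivalences from the Mikhlin Fourier multiplier theorem for UMD-valued $L^{p}$ functions. Since $J_{s-r}:=(1-\Delta)^{(s-r)/2}$ is an isomorphism of the Bessel scale, the first equivalence reduces, after applying $J_{s-r}$ to $u$ and setting $v=J_{s-r}u$, to
\[
\|v\|_{L^{p}(\R^{d};X)}+\|(-\Delta)^{r/2}v\|_{L^{p}(\R^{d};X)}\eqsim \|(1-\Delta)^{r/2}v\|_{L^{p}(\R^{d};X)},
\]
which in turn amounts to checking that the scalar Fourier symbols
\[
m_{1}(\xi)=\frac{1+|\xi|^{r}}{(1+|\xi|^{2})^{r/2}},\qquad m_{2}(\xi)=\frac{(1+|\xi|^{2})^{r/2}}{1+|\xi|^{r}}
\]
satisfy the Mikhlin condition $|\xi|^{|\alpha|}|\partial^{\alpha}m_{i}(\xi)|\lesssim_{\alpha}1$ for every multi-index $\alpha$. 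Both $1+|\xi|^{r}$ and $(1+|\xi|^{2})^{r/2}$ are comparable, positive, smooth off the origin, and obey the scale-invariant estimate $|\xi|^{|\alpha|}|\partial^{\alpha}f(\xi)|\lesssim |f(\xi)|$ (since $\partial^{\alpha}|\xi|^{r}$ scales as $|\xi|^{r-|\alpha|}$), and this property transfers to the ratios by the quotient rule; combined with the uniform two-sided bound $m_{i}\eqsim 1$, the Mikhlin estimate follows.

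For the second equivalence, the ``easy'' direction follows from the same reasoning applied to the symbols $\frac{(i\xi)^{\alpha}}{(1+|\xi|^{2})^{k/2}}$ for $|\alpha|=k$ and $(1+|\xi|^{2})^{-k/2}$, both of which are visibly Mikhlin. For the reverse bound the plan is to exhibit a pointwise identity
\[
(1+|\xi|^{2})^{k/2}=m_{0}(\xi)+\sum_{|\alpha|=k}m_{\alpha}(\xi)(i\xi)^{\alpha}
\]
with every $m_{\alpha}$ a Mikhlin multiplier, and then apply the UMD multiplier theorem termwise after factoring out $(1+|\xi|^{2})^{(s-k)/2}$. To construct such a decomposition, pick a smooth cutoff $\chi$ with $\chi\equiv 1$ near $0$ and compact support, set $m_{0}(\xi)=\chi(\xi)(1+|\xi|^{2})^{k/2}$ (smooth, compactly supported, hence Mikhlin), and use the multinomial identity $|\xi|^{2k}=\sum_{|\alpha|=k}\binom{k}{\alpha}\xi^{2\alpha}$ to rewrite the high-frequency remainder $(1-\chi(\xi))(1+|\xi|^{2})^{k/2}$ as
\[
\sum_{|\alpha|=k}\binom{k}{\alpha}\,\frac{(1-\chi(\xi))(1+|\xi|^{2})^{k/2}\,\xi^{\alpha}}{|\xi|^{2k}}\,\xi^{\alpha}.
\]
The cutoff removes the singularity of $|\xi|^{-2k}$ at the origin, so each coefficient is smooth on $\R^{d}$, homogeneous of degree zero at infinity, and therefore a Mikhlin symbol (up to the harmless factor $i^{-|\alpha|}$ converting $\xi^{\alpha}$ to $(i\xi)^{\alpha}$).

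The only delicate point is the verification of the Mikhlin condition near $\xi=0$ when $r$ is not an even integer, because $|\xi|^{r}$ then fails to be smooth there; this is handled by the scaling observation above, which shows that the order of the singularity of $\partial^{\alpha}|\xi|^{r}$ is exactly $|\alpha|$ below that of $|\xi|^{r}$, so that the Mikhlin factor $|\xi|^{|\alpha|}$ precisely absorbs it. Once the two symbol-level statements are in place, the norm equivalences follow directly from the UMD-valued Mikhlin theorem, which applies since $X$ is UMD and the multipliers are scalar.
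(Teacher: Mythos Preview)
Your argument is correct and is precisely what the paper has in mind: the paper does not give a proof but merely records the lemma as ``a well known consequence of Fourier multiplier theory,'' and your reduction to the scalar Mikhlin theorem for UMD-valued $L^{p}$ spaces is exactly that consequence. The symbol checks you perform (including the handling of the origin when $r\notin 2\N$ and the cutoff-plus-multinomial decomposition for the reverse inequality in the second equivalence) are all standard and correct.
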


The spaces $H^{s,p}$ will also be needed on bounded open intervals $I$. For a $I\subseteq \R$, $p\in (1, \infty)$, $w\in A_p$, $s\in \R$ the space $H^{s,p}(I,w;X)$ is defined as all restriction $f|_{I}$ where $f\in H^{s,p}(I,w;X)$. This is a Banach space when equipped with the norm
\[\|f\|_{H^{s,p}(I,w;X)} = \inf\{\|g\|_{H^{s,p}(\R,w;X)}: g|_{I} = f\}.\]

Either by repeating the proof of Lemma \ref{lem:equivalentnormsH} or by reducing to it by applying a bounded extension operator from $H^{\theta,p}(I,w;Y)\to H^{\theta,p}(\R,w;Y)$ and Fubini, we obtain the following norm equivalence.
\begin{lemma}\label{lem:equivalentnormsH2}
Let $X$ be a UMD space, $p\in (1, \infty)$, $s\in \R$, $r>0$, $k\in \N$, and let $I\subseteq \R$ be an open interval. Let $\theta\in (0,1)$ and $w\in A_p$. Then the following two norms give equivalent norms on $H^{\theta, p}(I;H^{s,p}(\R^d;X))$:
\begin{align*}
& \|(-\Delta)^{r/2} u\|_{H^{\theta,p}(I,w;H^{s-r,p}(\R^d;X))} + \|u\|_{H^{\theta,p}(I;H^{s-r,p}(\R^d;X))},
\\ &\sum_{|\beta| = k} \|\partial^{\beta} u\|_{H^{\theta, p}(I;H^{s-k,p}(\R^d;X))} + \|u\|_{H^{\theta,p}(I;H^{s-k,p}(\R^d;X))}.
\end{align*}
\end{lemma}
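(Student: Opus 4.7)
I would follow the second strategy suggested in the statement: reduce from $I$ to $\R$ using a bounded extension, and then on $\R$ exploit that the Bessel potential in time commutes with the spatial multipliers so that Lemma \ref{lem:equivalentnormsH} can be applied pointwise in $t$ and the resulting estimate integrated (Fubini).

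\textbf{Step 1 (Reduction to the line).} Since $w\in A_p$, for any Banach space $Z$ there exists a universal bounded linear extension operator $E:H^{\theta,p}(I,w;Z)\to H^{\theta,p}(\R,w;Z)$, whose operator norm does not depend on $Z$ (a Rychkov-type extension built by molecules or reflection will do). Because $E$ acts only in $t$, it commutes with the spatial operators $(-\Delta)^{r/2}$ and $\partial^{\beta}$; therefore, once the two norm equivalences are established for functions in $H^{\theta,p}(\R,w;H^{s,p}(\R^d;X))$, applying $E$ and then restricting to $I$ (which can only decrease the relevant norms) gives the same equivalences on $H^{\theta,p}(I,w;H^{s,p}(\R^d;X))$.

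\textbf{Step 2 (Time Bessel potential and commutation).} Set $\mathcal{J}^\theta_t:=(1-\partial_t^2)^{\theta/2}$, viewed as a Fourier multiplier in $t$. Since $X$ is UMD, the spaces $Y_\sigma:=H^{\sigma,p}(\R^d;X)$ are UMD for every $\sigma\in\R$ (they are isomorphic to $L^p(\R^d;X)$ via $(1-\Delta)^{-\sigma/2}$). The weighted vector-valued Mihlin multiplier theorem then yields
\[
\|u\|_{H^{\theta,p}(\R,w;Y_\sigma)}\eqsim \|\mathcal{J}^\theta_t u\|_{L^p(\R,w;Y_\sigma)}\qquad \text{for } \sigma\in\{s,\,s-r,\,s-k\}.
\]
Moreover, since $\mathcal{J}^\theta_t$ is a Fourier multiplier in $t$ while $(-\Delta)^{r/2}$ and $\partial^\beta$ are Fourier multipliers in $x$, they commute:
\[
\mathcal{J}^\theta_t (-\Delta)^{r/2}u=(-\Delta)^{r/2}\mathcal{J}^\theta_t u,\qquad \mathcal{J}^\theta_t \partial^\beta u=\partial^\beta \mathcal{J}^\theta_t u.
\]

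\textbf{Step 3 (Fubini).} For $u\in H^{\theta,p}(\R,w;Y_s)$ set $v(t):=(\mathcal{J}^\theta_t u)(t)\in Y_s$ for a.e.\ $t$. Applying Lemma \ref{lem:equivalentnormsH} pointwise in $t$ gives
\[
\|v(t)\|_{Y_s}\eqsim \|(-\Delta)^{r/2}v(t)\|_{Y_{s-r}}+\|v(t)\|_{Y_{s-r}}\eqsim \sum_{|\beta|=k}\|\partial^\beta v(t)\|_{Y_{s-k}}+\|v(t)\|_{Y_{s-k}}.
\]
Raising to the $p$-th power, using $(a+b)^p\eqsim a^p+b^p$ for $a,b\geq 0$, multiplying by $w(t)$, integrating in $t$, and invoking the commutations of Step 2 together with the Bessel characterization of Step 2 yields the two claimed equivalences on $\R$. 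Step 1 then transfers them to $I$.

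\textbf{Main obstacle.} All the analytic content sits in Step 2, namely the weighted vector-valued Mihlin multiplier theorem identifying $H^{\theta,p}(\R,w;Y)$ with $\{u:\mathcal{J}^\theta_t u\in L^p(\R,w;Y)\}$ for UMD $Y$ and $A_p$ weights $w$. This is precisely where both the UMD hypothesis on $X$ and the $A_p$ hypothesis on $w$ enter. Once this identification is granted, the remainder of the argument is Fubini and the spatial statement of Lemma \ref{lem:equivalentnormsH}, so nothing new needs to be proved on the PDE side.
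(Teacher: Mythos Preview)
Your proposal is correct and follows precisely the second route indicated in the paper (reduce from $I$ to $\R$ via a bounded extension operator, then use that the temporal Bessel potential commutes with the spatial multipliers and apply Lemma~\ref{lem:equivalentnormsH} pointwise together with Fubini). The paper itself gives only this one-line hint in lieu of a proof, so your write-up is a faithful and complete execution of its intended argument.
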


The next result follows from \cite[Proposition 7.4]{MeyVer}.
\begin{proposition}\label{prop:Sobolevemb}
Let $p\in (1, \infty)$, $\alpha\in [0,p-1)$, $T\in (0,\infty]$ and set $I=(0,T)$.
For all $f\in H^{\theta,p}(I,t^{\alpha};X)$ we have
\begin{align*}
\|f\|_{C^{\theta-\frac{1+\alpha}{p}}(\overline{I};X)}& \leq C\|f\|_{H^{\theta,p}(I,w_{\alpha};X)} \ \text{if} \  \ \theta>\frac{1+\alpha}{p},
\\  \|f\|_{C^{\theta-\frac1p}([\varepsilon,T];X)}& \leq C_{\varepsilon}\|f\|_{H^{\theta,p}(I,w_{\alpha};X)} \ \ \text{if} \ \ \theta>\frac{1}{p},  \ \ \varepsilon\in (0,T].
\end{align*}
\end{proposition}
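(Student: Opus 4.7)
The plan is to reduce both estimates to known Sobolev embeddings via simple extension/restriction arguments, exploiting the fact that $|t|^{\alpha}\,dt$ behaves as a measure of ``effective dimension'' $1+\alpha$, since $\int_{|t|<r} |t|^{\alpha}\,dt \asymp r^{1+\alpha}$.

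For the second estimate, observe that on $[\varepsilon, T]$ the weight $w_{\alpha}(t) = t^{\alpha}$ is bounded between $\varepsilon^{\alpha}\wedge T^{\alpha}$ and $\varepsilon^{\alpha}\vee T^{\alpha}$. Hence the restriction map $f \mapsto f|_{(\varepsilon, T)}$ is bounded from $H^{\theta,p}(I, w_{\alpha}; X)$ into the \emph{unweighted} space $H^{\theta,p}((\varepsilon, T); X)$, with constant depending on $\varepsilon$ and $T$. The classical vector-valued Sobolev embedding $H^{\theta,p}((\varepsilon, T); X) \hookrightarrow C^{\theta - 1/p}([\varepsilon, T]; X)$ for $\theta > 1/p$ then yields the claim.

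For the first estimate, first extend $f$ to a function on $\R$ with controlled norm; this is possible because $|t|^{\alpha}$ is an $A_{p}(\R)$-weight when $\alpha \in [0, p-1)$. For the integer case $\theta = 1$ I would proceed directly: for $0 < s < t$ with $t - s \leq s$, applying H\"older's inequality to $f(t) - f(s) = \int_{s}^{t} f'(u)\,du$ against the weight $u^{\alpha}$ yields a bound of the form $\|f'\|_{L^{p}(I, w_{\alpha};X)} (t-s)^{(p-1)/p} s^{-\alpha/p}$, and the factor $s^{-\alpha/p}$ is absorbed using $(t-s)/s \leq 1$ to produce the exponent $1 - (1+\alpha)/p$. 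The complementary case $t - s > s$ reduces to estimating $|f(t) - f(0)|$, which is handled by the dyadic sequence $t_{k} = 2^{-k} t$: the telescoping estimate shows that $f(t_{k})$ is Cauchy, simultaneously defining $f(0)$ and giving the desired H\"older bound up to the endpoint.

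For the fractional case $\theta \in ((1+\alpha)/p, 1)$, I would proceed by real interpolation: identify $\bigl(L^{p}(I, w_{\alpha}; X), W^{1, p}(I, w_{\alpha}; X)\bigr)_{\theta, p}$ with the weighted Besov space $B^{\theta}_{p, p}(I, w_{\alpha}; X)$, combine this with the embedding $H^{\theta, p} \hookrightarrow B^{\theta}_{p, \infty}$ (valid in UMD $X$), and transfer the integer-order H\"older bound through the reiteration theorem and $(C^{0}, C^{1})_{\theta, \infty} = C^{\theta}$. The main obstacle is showing that the power weight $|t|^{\alpha}$ interacts correctly with the frequency decomposition underlying the Besov characterization. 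Concretely, one needs a Nikolskii-type inequality $\|\Delta_{k} g\|_{L^{\infty}} \lesssim 2^{k(1+\alpha)/p} \|\Delta_{k} g\|_{L^{p}(|t|^{\alpha})}$ reflecting the effective dimension $1+\alpha$, which follows from the pointwise kernel bound $|\check{\psi}_{k}(s)| \lesssim 2^{k}(1 + 2^{k}|s|)^{-M}$ combined with the weighted ball estimate. This is exactly the content of \cite[Proposition 7.4]{MeyVer}, which is cited for the details.
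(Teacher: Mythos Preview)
The paper does not give a proof of this proposition at all: it simply states that ``the next result follows from \cite[Proposition~7.4]{MeyVer}'' and moves on. Your proposal is therefore considerably more detailed than what the paper provides, and it ultimately defers to the very same reference, so there is no conflict.

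A few comments on the sketch itself. Your argument for the second estimate is the natural one, but the sentence ``the restriction map $f\mapsto f|_{(\varepsilon,T)}$ is bounded from $H^{\theta,p}(I,w_{\alpha};X)$ into the unweighted $H^{\theta,p}((\varepsilon,T);X)$'' hides a small step: since $H^{\theta,p}(I,w_{\alpha};X)$ is defined by restriction from $H^{\theta,p}(\R,w_{\alpha};X)$, you need to pass through a smooth cutoff supported in $(\varepsilon/2,\infty)$ (on whose support $w_{\alpha}$ is two-sided bounded) before invoking comparability of the weighted and unweighted norms; multiplication by such a cutoff is bounded on $H^{\theta,p}$, so this is routine. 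Your integer-case argument for the first estimate is correct as written. The fractional case via interpolation and a weighted Nikolskii inequality is exactly the mechanism behind \cite[Proposition~7.4]{MeyVer}; note that the embedding $H^{\theta,p}\hookrightarrow B^{\theta}_{p,\infty}$ you invoke does use the UMD property of $X$, which the proposition as stated does not assume---but in every application in the paper $X$ is a complex interpolation space between UMD spaces and hence UMD, so this is harmless in context.
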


\begin{proposition}\label{prop:funcspacesUMD}
Let $X_0, X_1$ be UMD spaces and assume $(X_0, X_1)$ is an interpolation couple. Let $p\in (1, \infty)$, $w\in A_p$, and let $I\subseteq \R$ be an open interval. If $s_0<s_1$, $\theta\in (0,1)$ and $s = (1-\theta)s_0 + \theta s_1$, then the following assertions hold:
\begin{enumerate}[(1)]
\item\label{it1:funcspacesUMD} $W^{1,p}(I,w;X_0) = H^{1,p}(I,w;X_0)$.
\item\label{it2:funcspacesUMD} $[H^{s_0, p}(I,w;X_0), H^{s_1, p}(I,w;X_1)]_{\theta} = H^{s,p}(I,w;[X_0, X_1]_{\theta})$.
\end{enumerate}
In particular, there exists a constant $C$ such that for any $f\in H^{s_1, p}(I,w;X_0\cap X_1)$,
\[\|f\|_{H^{s,p}(I,w;[X_0, X_1]_{\theta})}\leq C\|f\|_{H^{s_0,p}(I,w;X_0)}^{1-\theta} |f\|_{H^{s_1,p}(I,w;X_1)}^{\theta}.\]
\end{proposition}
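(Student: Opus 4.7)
The plan is to establish \eqref{it1:funcspacesUMD} first via vector-valued Fourier multiplier theory, then to deduce \eqref{it2:funcspacesUMD} from complex interpolation of weighted vector-valued $L^p$ spaces combined with Stein's interpolation theorem applied to the analytic family of Bessel potentials. The concluding interpolation inequality is a general consequence of \eqref{it2:funcspacesUMD}.

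For \eqref{it1:funcspacesUMD} I would first treat the case $I=\R$. The equivalence $W^{1,p}(\R,w;X_0)=H^{1,p}(\R,w;X_0)$ reduces to the boundedness on $L^p(\R,w;X_0)$ of the Fourier multipliers $\xi\mapsto i\xi(1+\xi^2)^{-1/2}$ and $\xi\mapsto (1+\xi^2)^{1/2}(1+|\xi|)^{-1}$ (plus their reciprocals). These are classical Mikhlin symbols, so the scalar-valued Mikhlin multiplier theorem in the weighted UMD setting (valid since $X_0$ is UMD and $w\in A_p$) delivers the equivalence of norms. The statement on a general open interval $I$ then follows by a standard bounded extension operator from $H^{1,p}(I,w;X_0)$ (and from $W^{1,p}(I,w;X_0)$) into the full-line space; such extension operators carry over from the scalar weighted theory to the $X_0$-valued case without change.

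For \eqref{it2:funcspacesUMD} the idea is to use the Bessel potential $\Lambda:=(1-\partial_t^2)^{1/2}$ to reduce to the known identity
\[
[L^p(\R,w;X_0),L^p(\R,w;X_1)]_\theta=L^p(\R,w;[X_0,X_1]_\theta),
\]
which is classical. By the vector-valued multiplier theorem of the previous paragraph, $\Lambda^{\sigma}$ is an isomorphism $H^{t+\sigma,p}(\R,w;Y)\to H^{t,p}(\R,w;Y)$ for every $\sigma,t\in\R$ and every UMD space $Y$. I would then introduce the analytic family $T_z:=\Lambda^{-((1-z)s_0+zs_1)}$ and apply Stein's complex interpolation theorem: since $T_0$ is an isomorphism $L^p(\R,w;X_0)\to H^{s_0,p}(\R,w;X_0)$ and $T_1$ an isomorphism $L^p(\R,w;X_1)\to H^{s_1,p}(\R,w;X_1)$, one obtains that $T_\theta=\Lambda^{-s}$ maps $L^p(\R,w;[X_0,X_1]_\theta)$ boundedly into $[H^{s_0,p}(\R,w;X_0),H^{s_1,p}(\R,w;X_1)]_\theta$. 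Composing with the isomorphism $\Lambda^{s}:H^{s,p}(\R,w;[X_0,X_1]_\theta)\to L^p(\R,w;[X_0,X_1]_\theta)$ gives one inclusion, and running the argument with $\Lambda^{(1-z)s_0+zs_1}$ in the opposite direction yields the other. Passing from $\R$ to $I$ is again handled by restriction/extension, using that the complex method is stable under retractions. The final interpolation inequality is then immediate: any $f\in H^{s_1,p}(I,w;X_0\cap X_1)$ lies in both $H^{s_j,p}(I,w;X_j)$, so the standard logarithmic convexity of the complex method, together with the identification in \eqref{it2:funcspacesUMD}, gives the claimed bound.

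The main obstacle I anticipate is verifying the hypotheses of Stein's interpolation theorem for the operator-valued analytic family $T_z$ in the weighted UMD-valued setting, in particular controlling the norms of the imaginary powers $\Lambda^{i\tau}$ as $|\tau|\to\infty$ on the boundary of the strip. This is again a Mikhlin-type statement with admissible (polynomial-in-$\tau$) multiplier bounds, which is well-documented in the vector-valued harmonic analysis literature and should cause no serious difficulty, but it is the one place where the UMD and $A_p$ assumptions are both used in an essential, non-routine way.
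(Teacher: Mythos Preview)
Your plan is correct and is precisely the standard argument underlying the references the paper cites; the paper's own proof consists of pointers to \cite{LMV} and \cite{LinVerLp}, and what you outline is essentially the content of those results (Mikhlin multipliers for $W^{1,p}=H^{1,p}$, Bessel-potential lifts combined with an analytic-family/Stein interpolation for the complex-interpolation identity, and extension/retraction to pass from $\R$ to $I$). One small point the paper makes explicit and you gloss over: to run the extension argument for a general interval $I$ and an arbitrary $A_p$-weight $w$, one first has to extend $w|_I$ to an $A_p$-weight on all of $\R$ so that the full-line Mikhlin/Stein machinery applies; this is routine but should be mentioned.
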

\begin{proof}
\eqref{it1:funcspacesUMD}: This can be proved as in \cite[Proposition 5.5]{LMV} by using a suitable extension operator and a suitable extension of $w|_{I}$ to a weight on $\R$.

\eqref{it2:funcspacesUMD}: For $I = \R$, this follows from \cite[Theorem 3.18]{LinVerLp}. The general case follows from an extension argument as in \cite[Proposition 5.6]{LMV}.
\end{proof}

The following result follows from \cite[Theorem 1.1]{MeyVerTr} and standard arguments (see \cite{AgrVer} for details). Knowing the optimal trace space is essential in the proof of Theorem \ref{thm:nonlinearperturbation}.
\begin{proposition}[Trace embedding]\label{prop:traceembedding}
Let $X_0$ be UMD Banach spaces and $A$ a sectorial operator on $X_0$ and $0\in \rho(A)$ with $D(A) = X_1$. Let $p\in (1, \infty)$, $\alpha\in [0, p-1)$, $\beta\in (0,1)$ and $T\in (0,\infty]$. Set $w_{\alpha}(t) = t^{\alpha}$, $I = (0,T)$. Let
\[X_{\theta} = [X_0, X_{1}]_{\theta},   \ \ X_{\theta,p} =  (X_0, X_1)_{\theta,p}\]
denote the complex and real interpolation spaces for $\theta\in (0,1)$.
Then
\begin{align*}
L^p(I,w_{\alpha};X_{1})\cap W^{1,p}(I,w_{\alpha};X_0) & \hookrightarrow BUC(\overline{I}; X_{1-\frac{1+\alpha}{p},p}),
\\ L^p(I,w_{\alpha};X_{\beta})\cap H^{\beta,p}(I,w_{\alpha};X_0)& \hookrightarrow BUC(\overline{I};X_{\beta-\frac{1+\alpha}{p},p}).
\end{align*}
\end{proposition}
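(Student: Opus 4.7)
The plan is to deduce both embeddings from the weighted trace theorem \cite[Theorem 1.1]{MeyVerTr} and to upgrade the resulting continuity at the single point $0$ to uniform continuity on $\overline{I}$ via a translation argument.

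For the first embedding, I would first take $T=\infty$ and apply \cite[Theorem 1.1]{MeyVerTr} with integer order $k=1$. This directly identifies the trace at $0$ as an element of $(X_0, X_1)_{1-(1+\alpha)/p, p} = X_{1-(1+\alpha)/p, p}$, together with right-continuity at $0$. To upgrade to $BUC(\overline{I};X_{1-(1+\alpha)/p, p})$, I would translate: for any $t_0>0$, the shift $u(\cdot+t_0)$ still lies in the weighted space on $(0,\infty)$, and since the weight $t^\alpha$ is bounded above and below on $[t_0,t_0+1]$, the weighted norm of the shift on $[0,1]$ is controlled by the unweighted norm of $u$ on $[t_0,t_0+1]$. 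Hence the trace theorem applied to $u(\cdot+t_0)$ gives continuity at $t_0$, with a bound uniform on compact subsets of $\overline{I}$. For $T<\infty$, a weight-preserving reflection-plus-cut-off extension operator, built as in \cite[Proposition 5.5]{LMV}, reduces to the case $T=\infty$.

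For the second embedding the same strategy applies using the fractional-order version of \cite[Theorem 1.1]{MeyVerTr}, which (assuming implicitly $\beta>(1+\alpha)/p$, without which the trace is ill-defined) yields that the trace at $0$ lies in $(X_0, X_\beta)_{1-(1+\alpha)/(\beta p), p}$. The identification
\[
(X_0, X_\beta)_{1-(1+\alpha)/(\beta p), p} = X_{\beta - (1+\alpha)/p, p}
\]
then follows from reiteration applied to the couple $(X_0, X_1)$, using that $X_\beta = [X_0, X_1]_\beta$ is an exact interpolation space of exponent $\beta$. The $BUC$ conclusion on $\overline{I}$ is obtained by the same translation argument as above.

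The main obstacle I anticipate is the interpolation-space identification in the fractional case: it mixes a complex interpolation space on the inside with a real one on the outside, and requires the appropriate reiteration lemma. A secondary technical point is the uniformity of constants in the translation argument near the endpoints of $I$, which rests on $w_\alpha$ being locally in $A_p(\R)$; these details are worked out in \cite{AgrVer}.
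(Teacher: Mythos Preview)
Your proposal is correct and matches the paper's approach: the paper simply states that the result follows from \cite[Theorem 1.1]{MeyVerTr} together with standard arguments, referring to \cite{AgrVer} for details. You have spelled out exactly those standard arguments (translation for BUC, extension for finite $T$, reiteration for the fractional case), so there is nothing to add.
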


In the one-dimensional case we will also need the much simpler fractional Sobolev--Sobolewski spaces on $I=(0,T)$ with $T\in (0,\infty]$. For $\beta\in (0,1)$, $p\in (1, \infty)$ and a weight $w\in A_p$ we define the fractional Sobolev--Sobolewski space $W^{\beta,p}(I,w;X)$ as the space of all functions $\phi\in L^p(I,w;X)$ for which
\begin{align}\label{eq:Wthetanorm}
[\phi]_{W^{\beta,p}(I,w;X)}^p = \int_0^T \int_0^{T-h} \|\phi(s+h)-\phi(s)\|^p w(s) h^{-\beta p -1} \ud s \ud h<\infty.
\end{align}
This space is a Banach space when equipped with the norm $\|\phi\|_{W^{\beta,p}(I,w;X)} = [\phi]_{W^{\beta,p}(I,w;X)} + \|\phi\|_{L^p(I,w;X)}$. In the case $w_{\alpha}(t) = t^{\alpha}$ with $\alpha\in [0,p-1)$ it is well-known that (see \cite{Grisvard} and \cite[Proposition 1.1.13]{meyries2010maximal})
\begin{equation}\label{eq:realinterpol}
W^{\beta,p}(I,w_{\alpha};X) = (L^p(I,w_{\alpha};X),W^{1,p}(I,w_{\alpha};X))_{\beta, p}.
\end{equation}
For general $A_p$-weights such a characterization seems only possible if \eqref{eq:Wthetanorm} is replaced by a more complicated expression (see \cite[Proposition 2.3 with $p=q$]{MeyVerTr} for the case $I = \R$).

Note that, by \eqref{eq:realinterpol}, Proposition \ref{prop:funcspacesUMD} and general properties of real and complex interpolation \cite[Theorems 1.3.3(e) and 1.10.3]{Tr1}, we have
\begin{equation}\label{eq:realcomplexconnect}
W^{\beta,p}(I,w_{\alpha};X)\hookrightarrow H^{\theta,p}(I,w_{\alpha};X)
\end{equation}
for any UMD space $X$, $p\in (1, \infty)$ and $0<\theta<\beta<1$.

\subsection{Stochastic integration}

Let $L^p_{\F}(\Omega;L^q(I;X))$ denote the space of progressively measurable processes in $L^p(\Omega; L^q(I;X))$.

The It\^o integral of an {\em $\F$-adapted finite rank step process in $\gamma(H,X)$}, with respect to an $\F$-cylindrical Brownian motion $W_H$, is defined by
$$  \int_{\R_+} \sum \limits _{k=1} ^{N} \sum_{j=1}^M \one_{(t_{k},t_{k+1}] \times F_{k}}\otimes (h_{j}\otimes x_{k}) \,dW_H:=
\sum \limits _{k=1} ^{N} \sum_{j=1}^M \one_{F_{k}} [W_{H}(t_{k+1})h_{j}-W_{H}(t_{k})h_{j}]\otimes x_{k}, $$
for $N \in \N$, $0\leq t_{1}<t_{2}<...<t_{N+1}$, and for all $k=1,...,N$,
$F_{k}\in \F_{t_{k}}$, $h_{k}\in H$, $x_{k}\in X$. The following version of It\^o's isomorphism holds for such processes (see \cite{NVW1}):
\begin{theorem}\label{thm:UMD}
Let $X$ be a UMD Banach space and let $G$ be an $\F$-adapted
finite rank step process in $\gamma(H,X)$. For all $p\in (1,\infty)$
one has the two-sided estimate
\begin{align}\label{eq:twosided}
\E\sup_{t\geq 0}\Big\|\int_{0}^t G(s) \, dW_H(s)\Big\|^p \eqsim_p
\E\|G\|_{\gamma(L^2(\R_+;H),X))}^p,
\end{align}
with implicit constants depending only on $p$ and (the UMD constant of) $X$.
\end{theorem}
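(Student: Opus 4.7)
The plan is to pass from the adapted stochastic integral to a \emph{decoupled} Gaussian sum, at which point the right-hand side becomes tautologically the $\gamma$-norm. Step one handles the supremum: the process $M_t = \int_0^t G\, dW_H$ is an $X$-valued martingale with continuous paths, and because $X$ is UMD it is in particular reflexive, so Doob's maximal inequality yields
\[
\E\sup_{t\geq 0}\|M_t\|^p \eqsim_p \E\|M_\infty\|^p.
\]
Thus it suffices to prove the two-sided bound for the terminal integral $\int_0^\infty G\, dW_H$.

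Step two is the crucial decoupling. On a product probability space, let $\tilde W_H$ be an independent $\F$-cylindrical Brownian motion. Writing $G$ in its canonical representation as a finite rank step process adapted to $(\F_{t_k})$, the UMD decoupling inequality of Garling (valid precisely for UMD spaces) gives
\[
\E\Big\|\int_0^\infty G\, dW_H\Big\|^p \eqsim_{p,X}\E\Big\|\int_0^\infty G\, d\tilde W_H\Big\|^p.
\]
This is proved by induction on the number of steps of $G$: at each step, $G(t_k)[W_H(t_{k+1})-W_H(t_k)]$ and $G(t_k)[\tilde W_H(t_{k+1})-\tilde W_H(t_k)]$ form a martingale difference pair to which the UMD property of $X$ applies with Gaussian multipliers (after Kahane-Khintchine reduction of the Gaussians to $\pm 1$ signs, or directly using the Gaussian-UMD characterisation).

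Step three identifies the right-hand side with the $\gamma$-norm. Condition on $\F_\infty$: then $G$ is deterministic, and $\int_0^\infty G\, d\tilde W_H$ is a centered Gaussian element of $X$ whose $L^p(\tilde \Omega;X)$-norm equals, by the very definition of the radonifying norm,
\[
\E_{\tilde\Omega}\Big\|\int_0^\infty G\, d\tilde W_H\Big\|^p \eqsim_p \|G\|_{\gamma(L^2(\R_+;H),X)}^p,
\]
where the equivalence of the $L^2$- and $L^p$-Gaussian norms is Kahane-Khintchine. Taking expectations over $\Omega$ via Fubini and chaining with the decoupling inequality produces the claimed equivalence.

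The main obstacle is the decoupling step, which encodes the full strength of the UMD hypothesis and is not a routine computation; the rest is Doob plus the definition of $\gamma(L^2(\R_+;H),X)$ together with Kahane-Khintchine. Everything else (measurability, the integrability of the Gaussian conditional expectation, Fubini) is justified by the fact that $G$ has finite rank and finitely many time steps, which makes the conditional Gaussian law explicit.
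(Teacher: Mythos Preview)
The paper does not prove this theorem; it is quoted from \cite{NVW1} (van Neerven--Veraar--Weis) as a known result. Your sketch is essentially the argument found in that reference: reduce to the terminal variable via Doob, apply Garling's UMD decoupling to pass to an independent copy $\tilde W_H$, then condition on $\mathcal F_\infty$ and recognise the $\gamma$-norm via Kahane--Khintchine.

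One small correction: Doob's $L^p$-maximal inequality for $X$-valued martingales does not rely on reflexivity or UMD. The process $t\mapsto\|M_t\|$ is a nonnegative real-valued submartingale, so the scalar Doob inequality applies directly and gives $\E\sup_t\|M_t\|^p\eqsim_p\E\|M_\infty\|^p$ for any Banach space $X$. Your appeal to ``UMD implies reflexive'' is therefore superfluous at that step, though harmless. The UMD hypothesis is used only (and essentially) in the decoupling step, as you correctly emphasise.
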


The class of UMD Banach spaces includes all Hilbert spaces, and all $L^q(\dom;G)$ spaces for $q \in (1,\infty)$, and $G$ another UMD space.
It is stable under isomorphism of Banach spaces, and included in the class of reflexive Banach spaces. Closed subspaces, quotients, and duals of UMD spaces are UMD. For more information on UMD spaces see \cite{HNVW1} or \cite{Burk01}.\\

Theorem \ref{thm:UMD} allows one to extend the stochastic integral, by density, to the closed linear span in $L^p(\Omega;\gamma(L^2(\R_+;H),X))$ of all $\F$-adapted finite rank step processes in $\gamma(H,X)$) (see \cite{NVW1}). We denote this closed linear span by $L^p _{\mathcal{F}} (\Omega;\gamma(L^2(\R_+;H),X))$. Moreover, this set coincides with the progressively measurable processes in $L^p(\Omega;\gamma(L^2(\R_+;H),X))$. \\

If the UMD Banach space $X$ has type $2$ (and thus martingale type 2), then one has a continuous embedding
$
L^2(\R_+;\g(H,X))\embed \gamma(L^2(\R_+;H),X)
$
(see \cite{NW1, RS}). See \cite{HNVW2} or \cite{DJT, Pi75, Pi2} for a presentation of the notions of type and martingale type.

In such a Banach space, \eqref{eq:twosided} implies that
\begin{align}\label{eq:type2est}
\E\sup_{t\geq 0}\Big\|\int_{0}^t G(s) \, dW_H(s)\Big\|^p \leq C^p
\E\|G\|_{L^2(\R_+;\g(H,X))}^p,
\end{align}
where $C$ depends on $X$ and $p$. The stochastic integral thus uniquely extends to
$L_\F^p(\O;L^2(\R_+;\g(H,X)))$ (as it does in \cite{Brz2, Nh}).

Note, however, that the sharp version of It\^o's isomorphism given in Theorem \ref{thm:UMD} is critical to prove stochastic maximal regularity, even in time-independent situations. The weaker estimate \eqref{eq:type2est} (where the right hand side would typically be
$L^{2}(\R_{+};L^{p}(\R^{d}))$ instead of $L^{p}(\R^{d};L^{2}(\R_{+}))$) does not suffice for this purpose (see \cite{NVW12a}).\\

We end this subsection with a simple lemma which is applied several times. It will be stated for weights in the so-called $A_q$ class in dimension one. In the unweighted case the lemma is simple and well-known. Note that $w(t) = |t|^{\alpha}$ is in $A_q$ if and only if $\alpha\in (-1, q-1)$.
\begin{lemma}\label{lem:simpleestt}
Assume $X$ is a UMD space with type $2$. Let $p\in [2, \infty)$, and $w\in A_{\frac{p}{2}}$ (if $p=2$, then we take $w=1$),  $\theta\in (0,\frac12)$, and $T\in (0,\infty)$ and set $I = (0,T)$. Assume
\[U(t)  = u_0 + \int_0^t f(s) \ud s + \int_0^t g(s) \ud W_H(s), \ \ \ t\in [0,T].\]
where $f\in L^p_{\F}(\Omega;L^p(I,w;X))$ and $g\in L^p_{\F}(\Omega;L^p(I,w;\gamma(H,X)))$. Then $U\in L^p(\Omega;C(\overline{I};X))$ and there exists a constant $C = C(p,w,T,X,\theta)$ which is increasing in $T$ and such that
\begin{align*}
\|U&\|_{L^p(\Omega;C(\overline{I};X))} + \|U\|_{L^p(\Omega;W^{\theta,p}(I, w;X))}\\ & \leq \|u_0\|_{X} + C\|f\|_{L^p(\Omega;L^p(I,w;X))} + C\|g\|_{L^p(\Omega;L^p(I,w;\gamma(H,X)))}.
\end{align*}
\end{lemma}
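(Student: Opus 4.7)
The plan is to split $U=u_0+U_d+U_s$ with $U_d(t):=\int_0^t f(s)\,\ud s$ and $U_s(t):=\int_0^t g(s)\,\ud W_H(s)$, and to estimate each piece separately. The constant $u_0$ is trivial: the $W^{\theta,p}$ seminorm vanishes on constants and $\|u_0\|_{L^p(I,w;X)}\leq \|u_0\|_X(\int_0^T w)^{1/p}\lesssim \|u_0\|_X$ using local integrability of $w\in A_{p/2}$.

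For the deterministic piece I would estimate the increments by the Hardy--Littlewood maximal function,
\[
\|U_d(s+h)-U_d(s)\|_X\leq\int_s^{s+h}\|f(r)\|_X\,\ud r\leq 2h\,M(\|f\|_X)(s),
\]
and invoke Muckenhoupt's theorem ($M:L^p(\R,w)\to L^p(\R,w)$, available since $w\in A_{p/2}\subseteq A_p$). After substituting, the $h$-integral $\int_0^T h^{p-1-\theta p}\,\ud h$ converges (as $\theta<1/2<1$) and the $s$-integral is controlled by $\|f\|_{L^p(I,w;X)}^p$. The $C(\overline I;X)$ bound follows from H\"older's inequality with the dual weight $w^{1-p'}$, locally integrable because $w\in A_p$.

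For the stochastic piece I would combine Theorem \ref{thm:UMD} with the type $2$ embedding $L^2(\R_+;\gamma(H,X))\hookrightarrow\gamma(L^2(\R_+;H),X)$ to obtain
\[
\E\|U_s(s+h)-U_s(s)\|_X^p\leq C\,\E\Bigl(\int_s^{s+h}\|g(r)\|_{\gamma(H,X)}^2\,\ud r\Bigr)^{p/2}
\]
for $0\leq s<s+h\leq T$. When $p>2$, Minkowski's integral inequality in $L^{p/2}(\Omega)$ upgrades this to $\bigl(\int_s^{s+h}\psi(r)\,\ud r\bigr)^{p/2}$ where $\psi(r):=(\E\|g(r)\|^p)^{2/p}$ satisfies $\int_0^T\psi^{p/2}w\,\ud r=\|g\|_{L^p(\Omega;L^p(I,w;\gamma(H,X)))}^p$. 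The decisive pointwise bound $\int_s^{s+h}\psi\leq 2h\,M\psi(s)$ then decouples the $h$- and $s$-integrals,
\[
\int_0^T\!\!\int_0^{T-h}h^{-1-\theta p}w(s)\Bigl(\int_s^{s+h}\psi\Bigr)^{p/2}\!\ud s\,\ud h\leq 2^{p/2}\!\!\int_0^T\!\! h^{p/2-1-\theta p}\,\ud h\int_0^T\!\! w(s)(M\psi(s))^{p/2}\ud s,
\]
and two facts close the estimate: the $h$-integral converges precisely because $\theta<1/2$, and Muckenhoupt's theorem at the \emph{smaller} exponent $p/2$ (this is where $w\in A_{p/2}$ enters) bounds the $s$-integral by $C\int\psi^{p/2}w\,\ud r$. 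The sup-norm estimate $\|U_s\|_{L^p(\Omega;C(\overline I;X))}\lesssim \|g\|_{L^p(\Omega;L^p(I,w;\gamma(H,X)))}$ follows from the same It\^o/type $2$ inequality with $s=0$, $h=T$, combined with H\"older's inequality using local integrability of $w^{-1/(p/2-1)}$ (the $A_{(p/2)'}$ dual weight). In the degenerate case $p=2$, $w=1$ we cannot appeal to Muckenhoupt on $L^1$, but this is not needed: Minkowski is vacuous, and the double integral collapses by direct Fubini since $\int_0^r(r-s)^{-2\theta}\,\ud s<\infty$ whenever $\theta<1/2$.

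The main obstacle is the regime $\theta$ close to $1/2$: a naive Jensen bound $(\int_s^{s+h}\psi)^{p/2}\leq h^{p/2-1}\int_s^{s+h}\psi^{p/2}$ applied inside the seminorm only delivers the range $\theta<\tfrac12-\tfrac1p$. Routing the $h$-average through the maximal function and invoking Muckenhoupt at exponent $p/2$ rather than $p$ recovers the full range $\theta<\tfrac12$ stated in the lemma.
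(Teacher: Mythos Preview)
Your proof is correct and follows essentially the same route as the paper: the key step for the stochastic increments---bounding $\int_s^{s+h}$ by $h$ times the Hardy--Littlewood maximal function and then invoking Muckenhoupt at the exponent $p/2$---is identical, and your $p=2$ argument by direct Fubini matches the paper's as well. Two minor differences: (i) for the deterministic piece the paper simply uses $W^{1,p}\hookrightarrow W^{\theta,p}$ (since $\int_0^\cdot f\in W^{1,p}$ with derivative $f$), which is shorter than your maximal-function route; (ii) for the stochastic piece the paper applies the maximal function pathwise to $\|g(\cdot,\omega)\|_{\gamma(H,X)}^2$ and only then takes $L^p(\Omega)$-norms, whereas you first pass to the deterministic $\psi$ via Minkowski---both orderings work and yield the same bound.
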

\begin{proof}
The definition of a strong solution and the properties of the integrals immediately give the existence of a continuous modification, and by \eqref{eq:type2est}, we find
\begin{align*}
\|&U\|_{L^p(\Omega;C(\overline{I};X))} \\ & \lesssim_{X,p} \|u_0\|_{L^p(\Omega;X)} + \|f\|_{L^p(\Omega;L^1(I;X))} +C\|g\|_{L^p(\Omega;L^2(I;\gamma(H,X)))}
\\ & \lesssim_{p,w,T} \|u_0\|_{X} + \|f\|_{L^p(\Omega;L^p(I,w;X))} + \|g\|_{L^p(\Omega;L^p(I,w;\gamma(H,X)))},
\end{align*}
where in the last step we applied H\"older's inequality.

To prove the estimate concerning the fractional regularity note that
\[\|u_0\|_{W^{\theta,p}(I,w;X)}\leq \|u_0\|_{L^{p}(I,w;X)} \leq C \|u_0\|_{X}\]
and
\[\Big\|\int_0^{\cdot} f(s) \ud s\Big\|_{W^{\theta,p}(I,w;X)}\leq
\Big\|\int_0^{\cdot} f(s) \ud s\Big\|_{W^{1,p}(I,w;X)} \leq C\|f\|_{L^p(I,w;X)}\]
which gives the required estimates after taking $L^p(\Omega)$-norms.

Let $I(g) = \int_0^\cdot g d W_H$. By \eqref{eq:type2est} and H\"older inequality the stochastic integral can be estimated, for $t \in I$, by
\[\|I(g)(t)\|_{L^p(\Omega;X)}\leq C\|g\|_{L^p(\Omega;L^p((0,T),w;\gamma(H,X)))}.\]
Taking $L^p((0,T),w)$ norms, part of the required estimate follows. For the difference norm part, first consider $p\in (2, \infty)$. Then, for $s \in I$, and $M$ denoting the Hardy-Littlewood maximal function,
\begin{align*}
\|I(g)(s+h) - I(g)(s)\|_{L^p(\Omega;X)} & \leq C \|g\|_{L^p(\Omega;L^2((s,s+h);\gamma(H,X)))}
\\ & \leq C h^{1/2} \|(M \|g\|^2_{\gamma(H,X)})^{1/2}\|_{L^p(\Omega)}.
\end{align*}
Therefore, from \eqref{eq:Wthetanorm} we obtain
\begin{align*}
[I(g)]_{L^p(\Omega;W^{\theta,p}(I,w;X))}^p &= \int_0^T \int_{0}^{T-h} \|I(g)(s+h) - I(g)(s)\|_{L^p(\Omega;X)}^p w(s) h^{-\theta p -1} \ud s \ud h
\\ & \leq C  \int_0^T \|(M \|g\|^2_{\gamma(H,X)})^{1/2}\|_{L^p(\Omega;L^p(I,w))}^p h^{(\frac12-\theta) p -1} \ud h
\\ & \leq C \|g\|_{L^p(\Omega;L^p(I,w;\gamma(H,X)))}^p,
\end{align*}
where we used $\theta\in (0,\frac12)$ and applied the boundedness of the maximal function in $L^{p/2}(\R,w)$ (see \cite[Theorem 9.1.9]{GrafakosM}).

If $p=2$, then $w=1$ and we can write (using again that $\theta<\frac{1}{2}$)
\begin{align*}
\int_0^T &\int_0^{T-h}\|I(g)(s+h) - I(g)(s)\|_{L^2(\Omega;X))}^2 \ud s \ud h \\ & \leq C \int_{0}^T \int_{0}^{T-h} \int_s^{s+h} \|g(\sigma)\|_{L^2(\Omega;\gamma(H,X)))}^2 h^{-2\theta  -1} \ud \sigma \ud s \ud h
\\ & \leq C \int_{0}^T \int_{\sigma}^{T} \int_{\sigma-s}^{T-s}  h^{-2\theta  -1} \ud h \ud s  \, \|g(\sigma)\|_{L^2(\Omega;\gamma(H,X)))}^2\ud \sigma
\\ & \leq C \int_{0}^T \int_{0}^{\sigma} (\sigma-s)^{-2\theta} \ud s  \, \|g(\sigma)\|_{L^2(\Omega;\gamma(H,X)))}^2\ud \sigma
\\ & \leq C\|g\|_{L^2(\Omega\times I;\gamma(H,X)))}^2.
\end{align*}
\end{proof}

\begin{remark}
Fractional regularity of stochastic integrals in the vector-valued setting is considered in many previous papers (see \cite{Brz2,OndrVer,ProVer15} and references therein). In particular, the unweighted case of Lemma \ref{lem:simpleestt} can be found in \cite[Corollary 4.9]{ProVer15} where it is a consequence of a regularity result on arbitrary UMD spaces. The weighted case appears to be new. Using Rubio de Francia extrapolation techniques one can extend Lemma \ref{lem:simpleestt} to a large class of Banach functions spaces $E(I,w;X)$ instead of $L^p(I,w;X)$ (see \cite{CMP}).
\end{remark}

\section{Maximal regularity for stochastic evolution equations}
\label{sec:MR}
In this section we consider the semilinear stochastic evolution equation
\begin{equation}\label{eq:SEE}
\left\{\begin{aligned}
dU(t)  +  A(t) U(t)\ud t & =  F(t,U(t)) \ud t  +  \big(B(t) U(t)  +G(t,U(t))\big) \ud W_H(t),\\
 U(0) & = u_0.
\end{aligned}
\right.
\end{equation}
Here $A(t)$ and $B(t)$ are linear operators which are $(t,\omega)$-dependent. The functions $F$ and $G$ are nonlinear perturbations.

In Subsections \ref{subsec:det} and \ref{subsec:SMR} we introduce the definitions of maximal $L^p$-regularity for deterministic equations and stochastic equations respectively. This extends well-known notions to the $(t,\omega)$-dependent setting. Moreover, we allow weights in time.
In Subsection \ref{subs:SMRreduction} we present a way to reduce the problem with time-dependent operators to the time-independent setting. In Subsection \ref{subs:semil} we show that if one has maximal $L^p$-regularity, then this implies well-posedness of semilinear initial value problems. Finally in Subsection \ref{subs:B=0} we explain a setting in which one can reduce to the case $B=0$.

\subsection{The deterministic case\label{subsec:det}}

Consider the following hypotheses.
\begin{assumption}\label{conditionX}
Let $X_0$ and $X_1$ be Banach spaces such that $X_1\hookrightarrow X_0$ is dense. Let $X_{\theta} = [X_0, X_1]_{\theta}$ and $X_{\theta,p} = (X_0, X_1)_{\theta,p}$  denote the complex and real interpolation spaces at $\theta\in (0,1)$ and $p\in [1, \infty]$, respectively.
\end{assumption}

For $f\in L^1(I;X_0)$ with $I = (0,T)$ and $T\in (0,\infty]$ we consider:
\begin{equation}\label{eq:deteq}
\left\{\begin{aligned}
u'(t) +  A(t) u(t) & =  f(t), \ \  t\in I\\
 u(0) & = 0.
\end{aligned}
\right.
\end{equation}
We say that $u$ is a {\em strong solution} of \eqref{eq:deteq} if for any finite interval $J\subseteq I$ we have $u\in L^1(J;X_1)$ and
\begin{equation}\label{eq:generalMR}
u(t)+\int_0^t A(s)u(s)\ud s= \int_0^t f(s) \ud s,\ \ t\in \overline{J},
\end{equation}
Note that this identity yields that $u\in W^{1,1}(J;X_0)$ and $u\in C(\overline{J};X_0)$ for bounded $J\subseteq I$.

\begin{definition}[Deterministic maximal regularity]
Let Assumption \ref{conditionX} be satisfied and assume that $A:[s,\infty)\to \calL(X_1, X_0)$ is strongly measurable and $\sup_{t\in \R}\|A(t)\|_{\calL(X_1, X_0)}<\infty$. Let $p\in (1, \infty)$, $\alpha\in (-1,p-1)$, $T\in (0,\infty]$, and set $I = (0,T)$. We say that $A\in \DMR(p,\alpha,T)$ if for all $f\in L^p(I,w_{\alpha};X_0)$, there exists a strong solution \[u\in W^{1,p}(I,w_{\alpha};X_0)\cap L^{p}(I,w_{\alpha};X_1)\]
of \eqref{eq:deteq} and
\begin{equation}\label{eq:MRest}
\|u\|_{W^{1,p}(I,w_{\alpha};X_0)} + \|u\|_{L^{p}(I,w_{\alpha};X_1)}\leq C\|f\|_{L^{p}(I,w_{\alpha};X_{0})}.
\end{equation}
\end{definition}

In \eqref{eq:generalMR} we use the continuous version of $u:\overline{I}\to X_0$.  By Proposition \ref{prop:traceembedding}  for $\alpha\in [0,p-1)$ we have
\[u\in C_{ub}(\overline{I};X_{1-\frac{1+\alpha}{p},p}) \ \ \text{and} \ \ u\in C_{ub}([\varepsilon,T];X_{1-\frac{1}{p},p}), \ \ \varepsilon\in (0,T).\]
If $\alpha\in (-1, 0)$ the first assertion does not hold, but the second one holds on $[0,T]$ if $T<\infty$.

\begin{remark}\label{rem:intervals}
Although we do allow $T=\infty$ in the above definition, most result will be formulated for $T\in (0,\infty)$  as this is often simpler and enough for applications to PDEs.

Note that $A\in \DMR(p,\alpha,T)$ implies that the solution $u$ is unique (use \eqref{eq:MRest}). Furthermore, it implies unique solvability of \eqref{eq:deteq} on subintervals $J = (a,b)\subseteq I$. In particular, $\DMR(p,\alpha, T)$ implies $\DMR(p,\alpha, t)$ for all $t\in (0,T]$.
\end{remark}

\subsection{Hypothesis on $A$ and $B$ and the definition of SMR\label{subsec:SMR}}

Consider the following hypotheses.
\begin{assumption}\label{conditionA} Let $H$ be a separable Hilbert space. Assume $X_0$ and $X_1$ are UMD spaces with type $2$. Let $A:\R_+\times\O\to \calL(X_1, X_0)$ be strongly progressively measurable and
    \[C_A:=\sup_{t\in \R,\omega\in \Omega}\|A(t,\omega)\|_{\calL(X_1, X_0)}<\infty.\]

Let $B:\R_+\times \O\to \calL(X_1, \calL(H,X_{\frac12}))$ be such that for all $x\in X_1$ and $h\in H$, $(Bx)h$ is strongly progressively measurable and assume there is a constant $C$ such that \[C_B:=\sup_{t\in \R,\omega\in \Omega}\|B(t,\omega)\|_{\calL(X_1,\calL(H,X_{\frac12}))}<\infty.\]
\end{assumption}

For $f\in L^1(I;X_0)$ and $g\in L^2(I;\gamma(H,X_{\frac12}))$ with $I = (0,T)$ and $T\in (0,\infty]$ we consider:
\begin{equation}\label{eq:stochcase}
\left\{\begin{aligned}
dU(t)  +  A(t) U(t)\ud t & =  f(t) \ud t  +  \big(B(t) U(t)  + g(t)\big) \ud W_H(t),\\
 U(0) & = 0.
\end{aligned}
\right.
\end{equation}
We say that $U$ is a {\em strong solution} of \eqref{eq:SEE} if for any finite interval $J\subseteq I$ we have $U \in L^0_{\F}(\Omega;L^2(J;\gamma(H,X_{1})))$ and
almost surely for all $t\in I$,
\begin{equation}\label{eq:generalMRStoch}
U(t)  +\int_0^t A(s)U(s)\ud s= \int_0^t f(s) \ud s + \int_0^t \Big(g(s) + B(s) U(s)\Big)\ud W_H(s),
\end{equation}
The above stochastic integrals are well-defined by \eqref{eq:type2est}.
Identity \eqref{eq:generalMRStoch} yields that $U$ has paths in $C(\overline{J};X_0)$ for bounded $J\subseteq I$ (see Lemma \ref{lem:simpleestt}).

\begin{definition}[Stochastic maximal regularity]
\label{def:SMR}
Suppose Assumptions \ref{conditionX} and \ref{conditionA} hold.
Let $p\in [2, \infty)$, $\alpha\in (-1,\frac{p}{2}-1)$ ($\alpha = 0$ is included if $p=2$), $T\in (0,\infty]$, and set $I = (0,T)$. We say that $(A,B)\in \SMR(p,\alpha,T)$ if for all $f\in L^p_{\F}(\Omega\times I,w_{\alpha};X_0)$ all $g\in L^p_{\F}(\Omega\times I,w_{\alpha};\gamma(H,X_{\frac12}))$, there exists a strong solution
\[U\in \bigcap_{\theta\in [0,\frac12)} L^p(\Omega;H^{\theta,p}(I,w_{\alpha};X_{1-\theta}))\]
of \eqref{eq:stochcase} and for each $\theta\in [0,\frac12)$ there is a constant $C_{\theta}$ such that
\begin{equation}\label{eq:SMRest}
\begin{aligned}
&\|U\|_{L^p(\Omega;H^{\theta,p}(I,w_{\alpha};X_{1-\theta}))} \\ &\qquad \leq
C_{\theta}\|f\|_{L^{p}(\Omega\times I,w_{\alpha};X_{0})} + C_{\theta}\|g\|_{L^{p}(\Omega\times I,w_{\alpha};\gamma(H,X_{\frac12}))}.
\end{aligned}
\end{equation}
In the case $B = 0$ we write $A\in \SMR(p,\alpha,T)$ instead of $(A,0)\in \SMR(p,\alpha,T)$
\end{definition}
In the above we use a pathwise continuous version of $U:\Omega\times \overline{I}\to X_0$. By Proposition \ref{prop:traceembedding} if $\alpha\in [0,\frac{p}{2}-1)$ we even have
\[U\in L^p(\Omega;C(\overline{I};X_{1-\frac{\alpha+1}{p},p})) \ \ \text{and} \ \ U\in L^p(\Omega;C([\varepsilon,T];X_{1-\frac{\alpha+1}{p},p})).\]
If $\alpha\in (-1, 0)$ the first assertion does not hold, but the second one holds on $[0,T]$ if $T<\infty$.

A variant of Remark \ref{rem:intervals} holds for $\SMR$. In particular, any of the estimates \eqref{eq:SMRest} implies uniqueness.
\begin{remark}
Unlike in the deterministic case the stochastic case does not allow for an optimal endpoint $H^{\frac12, p}$, because already a standard Brownian motion does not have paths in this space a.s. Therefore, we need to quantify over $\theta\in [0,\frac12)$ in the above definition.

In the case $-A$ is time-independent and generates an analytic semigroup, some different type of end-point results on the time-regularity in terms of Besov spaces have been obtained in \cite{OndrVer} which even include regularity at exponent $\frac12$ which is known to be the optimal regularity of a standard Brownian motion.
\end{remark}

In the time-independent case, many properties of $\DMR$ and $\SMR$ are known such as independence of $p$, $\alpha$ and $T$. For details we refer to \cite{Dore,PRSimweight} for the deterministic case and \cite{AgrVer,LoVer} for the stochastic case.

In the next two results we collect sufficient conditions for $\DMR$ and $\SMR$ in the time independent case. The first result follows from \cite[Theorem 5.3 and (3.6)]{KWcalc} and \cite[Theorem 4.2]{We} (in the latter $\DMR$ was characterized in terms of $R$-boundedness).

\begin{proposition}\label{prop:suffcondDMR}
Suppose Assumption \ref{conditionX} is satisfied and assume $X_0$ is a UMD space. Assume $A\in \calL(X_1, X_0)$.

If $A$ has a bounded $H^\infty$-calculus of angle $<\pi/2$ and $0\in\rho(A)$, then $A\in \DMR(p,\alpha, T)$ for all $p\in (1, \infty)$, $\alpha\in (-1,p-1)$ and $T\in(0,\infty]$.
\end{proposition}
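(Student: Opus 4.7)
The plan is to derive the weighted $\DMR$ estimate by combining three well-established facts: that a bounded $H^\infty$-calculus of angle $<\pi/2$ on a UMD space implies $R$-sectoriality of the same angle; that Weis's theorem characterizes maximal $L^p$-regularity by $R$-sectoriality; and that the resulting convolution operator extends to $L^p$ with arbitrary $A_p$-weights via operator-valued Fourier multiplier/Calderón--Zygmund theory. Since the assumption $0\in\rho(A)$ allows us to avoid all low-frequency issues, the proof amounts to a direct invocation of these three ingredients.

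First I would deal with the unweighted case on $I=(0,\infty)$. Under the hypotheses, $-A$ generates a bounded analytic semigroup, so the strong solution of \eqref{eq:deteq} is $u(t)=\int_0^t e^{-(t-s)A}f(s)\,ds$. By \cite[Theorem 5.3 and (3.6)]{KWcalc}, the family $\{\lambda R(\lambda,A):\lambda\in\C\setminus\overline{\Sigma_\nu}\}$ is $R$-bounded for some $\nu<\pi/2$, i.e.\ $A$ is $R$-sectorial of angle $<\pi/2$. By \cite[Theorem 4.2]{We}, this is equivalent to $A\in\DMR(p,0,\infty)$, yielding the estimate \eqref{eq:MRest} with $\alpha=0$, $T=\infty$ for every $p\in(1,\infty)$.

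Next I would promote this to arbitrary $\alpha\in(-1,p-1)$. The key observation is that the map $f\mapsto Au$ is the convolution operator with the operator-valued kernel $K(t)=Ae^{-tA}\mathbf{1}_{t>0}$, whose Fourier symbol is $m(\xi)=A(i\xi+A)^{-1}$. The $R$-sectoriality of angle $<\pi/2$ is exactly the $R$-boundedness of $\{m(\xi),\,\xi m'(\xi):\xi\in\R\setminus\{0\}\}$, so the weighted operator-valued Mikhlin multiplier theorem on the UMD space $X_0$ (see e.g.\ \cite[Theorem 4.6]{KW}, extended to $A_p$-weights via the standard vector-valued Calderón--Zygmund machinery in \cite{HNVW1}) delivers the bound on $L^p(\R,w;X_0)$ for every $w\in A_p(\R)$. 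Since $w_\alpha(t)=|t|^\alpha\in A_p(\R)$ precisely when $\alpha\in(-1,p-1)$, this gives $\DMR(p,\alpha,\infty)$ after taking the restriction of $u$ and $f$ to $(0,\infty)$; the term $\|u\|_{L^p(I,w_\alpha;X_0)}$ is controlled by $\|Au\|_{L^p(I,w_\alpha;X_0)}$ using $0\in\rho(A)$, and $\|u'\|_{L^p(I,w_\alpha;X_0)}$ is then controlled by the equation.

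Finally, for $T<\infty$, extending $f\in L^p(I,w_\alpha;X_0)$ by zero to $(0,\infty)$ and invoking causality of the solution formula reduces the claim to the case $T=\infty$ already proved, and the estimate \eqref{eq:MRest} on $(0,T)$ follows by restriction (note that $w_\alpha$ on $(0,T)$ is the restriction of the same weight on $(0,\infty)$). The only subtlety, and the step I expect to require the most care, is the weighted vector-valued multiplier step, since one must invoke a version of the operator-valued Mikhlin theorem that holds for general $A_p$-weights and not only for power weights; this is available through the Calderón--Zygmund framework on UMD spaces combined with $R$-boundedness, and for the power weights $w_\alpha$ it is also contained in \cite{PRSimweight}.
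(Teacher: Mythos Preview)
Your proposal is correct and follows essentially the same route as the paper: the paper does not give a proof in the body but simply records that the result ``follows from \cite[Theorem 5.3 and (3.6)]{KWcalc} and \cite[Theorem 4.2]{We}'', i.e.\ bounded $H^\infty$-calculus $\Rightarrow$ $R$-sectoriality of angle $<\pi/2$ $\Rightarrow$ maximal $L^p$-regularity, with the weighted case coming from the operator-valued Mikhlin/Calder\'on--Zygmund theory for $A_p$-weights. You have merely unpacked these citations and added the (correct) reduction from $T<\infty$ to $T=\infty$ by zero extension and causality; your reference to \cite{PRSimweight} for the power-weight case is also appropriate.
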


In the time-independent setting the next result follows from \cite{NVW12a} for $\alpha= 0$ (also see \cite{NVWR,NVW15}). The case $\alpha\neq 0$ was obtained in \cite{AgrVer} by a perturbation argument.
\begin{proposition}\label{prop:suffcondSMR}
Suppose Assumption \ref{conditionX} is satisfied. Assume $A\in \calL(X_1, X_0)$.
Let $X_0$ be isomorphic to a $2$-convex Banach function space such that $(X^{1/2}_0)^*$ has the Hardy--Littlewood property (e.g. $X_0 = L^q(\dom;\ell^2)$, where and $q\in [2, \infty)$).

If $A$ has a bounded $H^\infty$-calculus of angle $<\pi/2$ and $0\in \rho(A)$, then $A\in \SMR(p,\alpha, T)$ for all $p\in (2, \infty)$, $\alpha\in (-1,\frac{p}{2}-1)$ and $T\in (0,\infty]$. Moreover, if $X_0$ is a Hilbert space, then the result in the case $(p,\alpha) = (2,0)$ holds as well.
\end{proposition}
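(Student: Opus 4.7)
The plan is to handle the statement in two stages, following the referenced papers: first the unweighted case $\alpha = 0$, and then the weighted case by perturbation.

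For $\alpha = 0$, split the problem by linearity into its deterministic part ($g = 0$) and its stochastic part ($f = 0$). The deterministic part is covered by Proposition~\ref{prop:suffcondDMR}, since a bounded $H^\infty$-calculus of angle $<\pi/2$ implies $R$-sectoriality of angle $<\pi/2$. For the stochastic part, the strong solution is the stochastic convolution $U(t) = \int_0^t e^{-(t-s)A} g(s)\,dW_H(s)$. I would apply It\^o's isomorphism (Theorem~\ref{thm:UMD}) pointwise in $t$ to get
\begin{equation*}
\|A^{1/2} U(t)\|_{L^p(\Omega; X_0)} \eqsim_p \big\| s \mapsto \one_{(0,t)}(s)\, A^{1/2} e^{-(t-s)A} g(s) \big\|_{L^p(\Omega; \gamma(L^2(\R_+; H), X_0))},
\end{equation*}
and then take $L^p(\R_+;dt)$-norms. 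The crucial step is to replace the $\gamma$-norm by a concrete square function using the geometric hypotheses on $X_0$ (2-convexity, together with the Hardy--Littlewood property of $(X_0^{1/2})^*$), following \cite{NVWR}. Once this is available, McIntosh's square function estimate from the bounded $H^\infty$-calculus of angle $<\pi/2$ yields the desired $L^p(\Omega; L^p(\R_+; X_0))$-bound on $A^{1/2} U$.

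To upgrade to fractional time regularity $H^{\theta,p}(I; X_{1-\theta})$ for $\theta \in [0,1/2)$, I would combine this $L^p(I; X_1)$ estimate (read through $A$-interpolation) with a direct bound on the fractional Sobolev norm of the stochastic convolution, along the lines of Lemma~\ref{lem:simpleestt}, using that $A^{\theta}e^{-tA}$ is bounded and that the semigroup is analytic. For the weighted case $\alpha \neq 0$, I would follow \cite{AgrVer}: starting from $\SMR(p, 0, T)$ for the shifted operator $A + \lambda$ (with $\lambda$ large enough to absorb lower-order errors), one recovers $\SMR(p, \alpha, T)$ via a perturbation argument exploiting that the power weight $w_\alpha$ is in $A_{p/2}$. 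The Hilbert-space case $p = 2$ is simpler: It\^o's isometry combined with the quadratic $H^\infty$-estimate suffices, with no need for the geometric hypotheses beyond the Hilbert structure.

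The main obstacle is the $\gamma$-to-square-function identification in the Banach function space setting. The sharp form of It\^o's isomorphism in Theorem~\ref{thm:UMD} is indispensable: the weaker inequality \eqref{eq:type2est} only produces an $L^2(\R_+; L^p)$-type control of the integrand, which inverts the order of integration and fails to yield maximal regularity, as noted in the paper after \eqref{eq:type2est}. The geometric hypotheses on $X_0$ are what allow one to realize $\gamma$-norms as $L^p$-valued square functions, at which point the scalar McIntosh square function estimate lifts to the required vector-valued bound. The perturbation step for $\alpha \neq 0$ is comparatively mild, once $\alpha = 0$ is in hand.
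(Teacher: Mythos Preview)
Your proposal is correct and follows precisely the route indicated by the paper, which does not supply an independent proof but simply attributes the case $\alpha=0$ to \cite{NVW12a} (see also \cite{NVWR,NVW15}) and the weighted case $\alpha\neq 0$ to the perturbation argument in \cite{AgrVer}. Your outline accurately reconstructs the key ingredients of those references: It\^o's isomorphism, the $\gamma$-to-square-function identification under the $2$-convexity and Hardy--Littlewood hypotheses, and McIntosh's square function estimate from the bounded $H^\infty$-calculus.
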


\subsection{$\SMR$ for time-dependent problems}\label{subs:SMRreduction}
The next result is a useful tool to derive $A\in \SMR$ from $A\in \DMR$ and $A_0\in \SMR$ for a certain reference operator $A_0$ which one is free to choose. It extends \cite[Lemma 5.1]{Kry09VMO} and \cite{KimLeesystems} where the case with $A_0 = -\Delta$ on $X_0 = L^p$ with $\alpha=0$ was considered and where $A(t)$ was a second order operator.
\begin{theorem}\label{thm:timedepSMR}
Suppose Assumptions \ref{conditionX} and \ref{conditionA} hold.
Let $p\in [2, \infty)$, $\alpha\in (-1,\frac{p}{2}-1)$ ($\alpha=0$ if $p=2$ is allowed as well) and $T\in (0,\infty)$.
\begin{enumerate}[(i)]
\item\label{it:A0cond}
There exists a sectorial operator $-A_0$ with $D(A_0) = X_1$, and $X_{\frac12} = D((\lambda+A_0)^{1/2})$ such that $A_0\in \SMR(p,\alpha,T)$.
\item\label{it:detcond} Assume that there is a $C>0$ such that for all $\omega\in \O$, $A(\cdot,\omega)\in \DMR(p,\alpha,T)$ and \eqref{eq:MRest} holds with constant $C$.
\end{enumerate}
Then $A\in \SMR(p,\alpha, T)$.
\end{theorem}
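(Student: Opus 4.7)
The plan is to absorb the rough stochastic integral into a term $V$ handled by the reference operator $A_0$, and to treat the remainder via the pathwise deterministic $\DMR$ assumption. First, invoke hypothesis (i), i.e.\ $A_0 \in \SMR(p,\alpha,T)$, with $f=0$: this produces a strong solution
\begin{equation*}
V \in \bigcap_{\theta \in [0,\tfrac12)} L^p(\Omega; H^{\theta,p}(I,w_\alpha;X_{1-\theta}))
\end{equation*}
of $\ud V + A_0 V\,\ud t = g\,\ud W_H$, $V(0)=0$, whose norms are controlled by $\|g\|_{L^p(\Omega\times I,w_\alpha;\gamma(H,X_{1/2}))}$. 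Here it is essential that $X_{1/2}=D((\lambda+A_0)^{1/2})$, as guaranteed by (i), so that the $\SMR(A_0)$ framework applies verbatim. Seek the solution of \eqref{eq:stochcase} (with $B=0$) in the form $U = V + W$; then $W$ must solve, for each $\omega$, the pathwise linear equation
\begin{equation*}
W'(t) + A(t,\omega) W(t) = f(t,\omega) + \bigl(A_0 - A(t,\omega)\bigr) V(t,\omega), \quad W(0) = 0.
\end{equation*}

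Set $h(t,\omega) := f(t,\omega) + (A_0 - A(t,\omega)) V(t,\omega)$. Since $\|A_0\|_{\calL(X_1,X_0)} + C_A < \infty$ and $V(\cdot,\omega)\in L^p(I,w_\alpha;X_1)$ for a.e.\ $\omega$, we have $h \in L^p(\Omega;L^p(I,w_\alpha;X_0))$ with
\begin{equation*}
\|h\|_{L^p(\Omega\times I, w_\alpha;X_0)} \lesssim \|f\|_{L^p(\Omega\times I, w_\alpha;X_0)} + \|g\|_{L^p(\Omega\times I,w_\alpha;\gamma(H,X_{1/2}))}.
\end{equation*}
Applying hypothesis (ii) pathwise then yields a unique $W(\cdot,\omega) \in L^p(I,w_\alpha;X_1) \cap W^{1,p}(I,w_\alpha;X_0)$ with the uniform $\DMR$-bound, and integrating over $\Omega$ yields the corresponding global estimate. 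To cover the whole regularity range required by Definition \ref{def:SMR}, complex interpolation via Proposition \ref{prop:funcspacesUMD} between $L^p(I,w_\alpha;X_1)$ and $W^{1,p}(I,w_\alpha;X_0) = H^{1,p}(I,w_\alpha;X_0)$ supplies $W \in L^p(\Omega;H^{\theta,p}(I,w_\alpha;X_{1-\theta}))$ for every $\theta \in [0,\tfrac12)$. Combined with the regularity of $V$, the candidate $U = V + W$ satisfies the estimate \eqref{eq:SMRest}, and a direct calculation shows that it is a strong solution of \eqref{eq:stochcase} with $B=0$.

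The main obstacle is the progressive measurability of $W$ (and hence of $U$), since hypothesis (ii) is a purely pathwise statement. Intuitively, $W(t,\omega)$ depends only on $h|_{[0,t]}(\omega)$ and $A|_{[0,t]}(\omega)$, both $\calF_t$-measurable, so causality ought to give adaptedness; to make this rigorous the plan is to approximate $h$ by simple progressively measurable step processes $h_n$ for which the corresponding pathwise $\DMR$-solutions $W_n$ are manifestly adapted (e.g.\ built inductively on short subintervals via a Neumann/Picard argument for the small-time solution operator), and to then pass to the limit using the uniform pathwise $\DMR$-bound
\begin{equation*}
\|W_n - W\|_{W^{1,p}(I,w_\alpha;X_0)\cap L^p(I,w_\alpha;X_1)} \lesssim \|h_n - h\|_{L^p(I,w_\alpha;X_0)}
\end{equation*}
together with density of such step processes in $L^p_\calF(\Omega; L^p(I,w_\alpha;X_0))$. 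Equivalently, this may be phrased as a measurable-inversion statement for the $\omega$-indexed family of bounded bijections $\mathcal{L}(\omega):W\mapsto W' + A(\cdot,\omega)W$ on the maximal-regularity space; once this soft measurability point is settled, the hard work is already contained in the pathwise decomposition described above.
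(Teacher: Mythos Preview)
Your decomposition $U = V + W$ coincides exactly with the paper's: the paper calls these $V_1$ and $V_2$, and the argument for the estimates (including the interpolation step via Proposition~\ref{prop:funcspacesUMD}) is the same.

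The only point worth flagging is the progressive measurability of $W$. Your first suggestion---approximating $h$ by simple step processes $h_n$ and arguing that the resulting $W_n$ are ``manifestly adapted''---does not quite work as stated: the pathwise solution operator still depends on $\omega$ through $A(\cdot,\omega)$, and replacing $h$ by $h_n$ does nothing to tame that dependence. A Picard/Neumann iteration for $W' + A(\cdot,\omega)W = h_n$ on short intervals is not obviously convergent either, since $A$ maps $X_1$ to $X_0$ and no smallness in $\calL(X_1,X_0)$ is available from shrinking the time interval alone. Your second suggestion, however, is exactly what the paper does: view $\Lambda_T(\omega) := d/dt + A(\cdot,\omega)$ as a bounded bijection from the maximal-regularity space $W^{1,p}(I,w_\alpha;X_0)\cap L^p(I,w_\alpha;X_1)$ (with zero initial value) onto $L^p(I,w_\alpha;X_0)$, observe that $\omega\mapsto\Lambda_T(\omega)\Lambda_T(\omega_0)^{-1}$ is strongly $\calF_T$-measurable with values in the invertible operators, and use continuity of inversion on that open set to conclude that $\omega\mapsto\Lambda_T(\omega)^{-1}$ is strongly $\calF_T$-measurable. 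Progressive measurability then follows by restricting to $[0,t]$ and repeating the argument. So your proposal is correct and matches the paper once you commit to the measurable-inversion route; the approximation detour is both unnecessary and, as written, incomplete.
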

\begin{proof}

In the proof we write
\[\MR_T := W^{1,p}((0,T), w_{\alpha};X_0)\cap L^p((0,T), w_{\alpha};X_1).\]
which we turn into a Banach space by using the sum norm.

{\em Step 1: Progressive measurability and estimates for the deterministic part}

Consider the mapping $\Lambda_T:\O\to \calL(\MR_T,L^p(0,T,w_{\alpha};X_0))$ given by $\Lambda_T(\omega) = d/dt + A(\cdot,\omega)$. Then $\Lambda_T$ is strongly $\F_T$-measurable and each $\Lambda_T(\omega)$ is invertible. It is well-known that its inverse mapping $\omega\mapsto \Lambda_T(\omega)^{-1}$ is strongly $\F_T$-measurable as well (see \cite{NaSa73}). For convenience we include a short argument for this special case. Fix $\omega_0\in \O$. Now $\omega \mapsto \Lambda_T(\omega)\Lambda_T(\omega_0)^{-1}\in \calL(\MR_T)$ is strongly $\F_T$-measurable and takes values in the invertible operators. Since taking inverses is a continuous mapping on the open set of invertible mappings it follows that  $\omega\mapsto \Lambda_T(\omega_{0})\Lambda_T(\omega)^{-1}$ is strongly $\F_T$-measurable as well. Clearly, the above holds with $T$ replaced by any $t\in (0,T]$ as well.

Now for $f\in L^p_{\F}(\Omega\times I;X_0)$, consider the problem
\[u' = A(t) u + f, \ \ u(0) = 0.\]
The solution is given by $u(\cdot,\omega) = \Lambda_T(\omega)^{-1} f(\cdot, \omega)$
and by \eqref{it:detcond}
\[\|u(\cdot,\omega)\|_{\MR_T} \leq C_1 \|f(\cdot,\omega)\|_{L^p((0,T),w_{\alpha};X_0)}.\]
Moreover, by the previous observations  $u$ is strongly $\F_T$-measurable (as an $\MR_T$-valued mapping) and we can take $L^p(\Omega)$-norms in the previous estimate to obtain
\begin{align}\label{eq:uLpOmegahulp}
\|u\|_{L^p(\Omega;\MR_T)} \leq C_1 \|f\|_{L^p(\Omega\times (0,T),w_{\alpha};X_0)}.
\end{align}
In the same way one can see that $u$ is progressively measurable. Indeed, for $t\in (0,T)$ $f|_{[0,t]}$ is strongly $\F_t\times \B([0,t])$-measurable and hence $u|_{[0,t]} = \Lambda_t^{-1}f|_{[0,t]}$ is strongly $\F_t$-measurable. By Proposition \ref{prop:funcspacesUMD}, we also have that $u \in SMR_{T}$
and that, for all $\theta \in [0,\frac{1}{2})$, $\|U\|_{SMR_{T,\theta}} \lesssim \|f\|_{L^p(\Omega\times (0,T),w_{\alpha};X_0)}.$

{\em Step 2: Main step}

It remains to prove existence and estimates in the space $\SMR_{T} :=\bigcap_{\theta\in [0,\frac12)} \SMR_{T,\theta}$, where
\[\SMR_{T,\theta} := L^p(\Omega; H^{\theta,p}(I,w_{\alpha};X_{1-\theta})).\]
Let $f\in L^p_{\F}(\Omega\times I,w_{\alpha};X_0)$ and $g\in L^p_{\F}(\Omega\times I,w_{\alpha};\gamma(H,X_{\frac12}))$. In order to prove $A\in \SMR(p,\alpha,T)$ consider
\begin{equation}\label{eq:SMRtoprove}
dU+AU\ud t = f\ud t + g\ud W,  \ \ U(0)= 0.
\end{equation}
We will build $U$ from the solutions of two sub-problems.

Since $A_0\in \SMR(p,\alpha,T)$ we can find $V_1\in \SMR_T$ such that
\[dV_1+A_0V_1\ud t =  g\ud W,  \ \ V_1(0)= 0\]
and one has the estimate, for each $\theta \in [0,\frac{1}{2})$,
\begin{equation}\label{eq:A0partest}
\|V_1\|_{\SMR_{T,\theta}}\leq C\|g\|_{L^p(\Omega\times I,w_{\alpha};\gamma(H,X_{\frac12}))}.
\end{equation}

By Step 1 we can find $V_2\in \SMR_T$ such that
\[V_2' + A V_2 = f + (A-A_0) V_1, \ \ V_2(0) = 0\]
and by \eqref{eq:uLpOmegahulp} and \eqref{eq:A0partest}
\begin{align*}
\|V_2\|_{\SMR_{T,\theta}}&\leq C \|f\|_{L^p(\Omega\times I,w_{\alpha};X_0)} + C\|(A-A_0) V_1\|_{L^p(\Omega\times I,w_{\alpha};X_0)}
\\ & \leq C \|f\|_{L^p(\Omega\times I,w_{\alpha};X_0)} + C\|g\|_{L^p(\Omega\times I,w_{\alpha};\gamma(H,X_{\frac12}))}.
\end{align*}

Now it is straightforward to check that $U = V_1+V_2$ is a solution to \eqref{eq:SMRtoprove} and combining the estimates for $V_1$ and $V_2$ we obtain $A\in \SMR(p,\alpha, T)$.
\end{proof}

The solvability of \eqref{eq:stochcase} with $B\neq 0$ can be a delicate matter. In particular it typically requires a stochastic parabolicity condition involving $A$ and $B$. However, there are several situations where one can prove an a priori estimate and where for a simple related problem one can prove existence and uniqueness of a solution in $L^p_{\F}(\Omega;L^p(I,w_{\alpha};X_1))$. These are the ingredients to apply the method of continuity (see \cite[Theorem 5.2]{GilTru}) to obtain existence and uniqueness of \eqref{eq:stochcase}. This is a well-known method, which we present in an abstract setting in the proposition below. For convenience let
\begin{align*}
E_{\theta} & = L^p_{\mathcal{F}}(\Omega; H^{\theta,p}(I,w_{\alpha};X_{1-\theta})),\\
Z_{\theta} & = L^p_{\mathcal{F}}(\Omega; L^{p}(I,w_{\alpha};X_{\theta})), \\
Z_{\theta}^{\gamma} & = L^p_{\mathcal{F}}(\Omega; L^{p}(I,w_{\alpha};\gamma(H,X_{\theta}))), \\
\end{align*}
where $I = (0,T)$ with $T\in (0,\infty)$. The spaces $Z_{\theta}$ are the spaces in which the data is chosen. The spaces $E_{\theta}$ are the spaces in which the solution lives.

\begin{proposition}[Method of continuity]\label{prop:methodcont}
Suppose Assumptions \ref{conditionX} and \ref{conditionA} hold. Let $p\in [2, \infty)$, $\alpha\in [0,\frac{p}{2}-1)$, $\theta \in [0,\frac{1}{2})$, and $T\in (0,\infty)$, and set $I = (0,T)$. Let $\tilde{A}\in \calL(X_1, X_0)$ be given. For $\lambda\in [0,1]$ let
\[A_{\lambda}(t) = (1-\lambda)\tilde{A} + \lambda A(t), \ \ \text{and} \ \ B_{\lambda}(t) = \lambda B(t).\]
Consider the problem $U(0) = 0$ and
\begin{equation}\label{eq:stochcaselambda}
dU(t)  +  A_{\lambda}(t) U(t)\ud t  =  f(t) \ud t  +  \big(B_{\lambda}(t) U(t)  + g(t)\big) \ud W_H(t).
\end{equation}
\begin{enumerate}[(i)]
\item\label{it:methodconti} Assume that there is a constant $C$ such that, for all $\lambda\in [0,1]$, all $f\in Z_0$, and all $g\in Z_{\frac12}^{\gamma}$, any strong solution to \eqref{eq:stochcaselambda} $U\in E_{\theta}\cap E_{0}$ satisfies
\begin{equation}\label{eq:apropriori}
\|U\|_{E_{\theta}}+\|U\|_{E_{0}}\leq C( \|f\|_{Z_0} + \|g\|_{Z_{\frac12}^{\gamma}}).
\end{equation}
\item\label{it:methodcontii} Assume that, for all $f\in Z_0$ and all $g\in Z_{\frac12}^{\gamma}$, there exists a strong solution  $U\in E_{\theta} \cap E_{0}$ to \eqref{eq:stochcaselambda} with $\lambda = 0$.
\end{enumerate}
Then for all $\lambda\in [0,1]$, all $f\in Z_0$, and all $g\in Z_{\frac12}^{\gamma}$,  there exists a unique strong solution $U\in E_{\theta} \cap E_{0}$ of \eqref{eq:stochcaselambda}, and it satisfies the estimate \eqref{eq:apropriori}
\end{proposition}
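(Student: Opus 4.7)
The plan is to run a classical method of continuity argument, using the uniform a priori estimate from \eqref{it:methodconti} to enable a Banach fixed point step of size independent of the base parameter $\lambda_0$, and then iterating finitely many times to cover $[0,1]$.

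First, observe that uniqueness on each $\lambda$ is immediate: if $U_1, U_2 \in E_\theta \cap E_0$ are two strong solutions of \eqref{eq:stochcaselambda} with the same data, then $U_1 - U_2$ is a strong solution with $f = 0$ and $g = 0$, so \eqref{eq:apropriori} forces $U_1 = U_2$. Next, let $\Lambda \subseteq [0,1]$ denote the set of $\lambda$ for which existence in $E_\theta \cap E_0$ holds for every $f \in Z_0$ and every $g \in Z_{\frac12}^\gamma$. By \eqref{it:methodcontii} we have $0 \in \Lambda$.

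The core step is to show that there exists $\delta > 0$, depending only on the constant $C$ in \eqref{eq:apropriori} and on $C_A$, $C_B$, $\|\tilde A\|_{\calL(X_1,X_0)}$, such that if $\lambda_0 \in \Lambda$ and $|\lambda - \lambda_0| < \delta$ then $\lambda \in \Lambda$. To do this, rewrite \eqref{eq:stochcaselambda} at $\lambda$ using $A_\lambda - A_{\lambda_0} = (\lambda - \lambda_0)(A(\cdot) - \tilde A)$ and $B_\lambda - B_{\lambda_0} = (\lambda - \lambda_0) B(\cdot)$ as
\begin{equation*}
dU + A_{\lambda_0} U \, dt = \bigl(f - (\lambda - \lambda_0)(A - \tilde A) U\bigr) dt + \bigl(B_{\lambda_0} U + (\lambda - \lambda_0) B U + g\bigr) dW_H.
\end{equation*}
For $V \in E_\theta \cap E_0$, let $S(V) \in E_\theta \cap E_0$ be the unique strong solution of
\begin{equation*}
dU + A_{\lambda_0} U \, dt = \bigl(f - (\lambda - \lambda_0)(A - \tilde A) V\bigr) dt + \bigl(B_{\lambda_0} U + (\lambda - \lambda_0) B V + g\bigr) dW_H,
\end{equation*}
which exists since $\lambda_0 \in \Lambda$ and the modified data $f - (\lambda-\lambda_0)(A-\tilde A) V \in Z_0$ and $g + (\lambda - \lambda_0) B V \in Z_{\frac12}^\gamma$ by Assumption \ref{conditionA} (note that these require only $V \in E_0$). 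Progressive measurability of $S(V)$ is inherited from that of the data and the solution map at $\lambda_0$. Applying the a priori estimate \eqref{eq:apropriori} to $S(V_1) - S(V_2)$, which is a strong solution at $\lambda_0$ with data $-(\lambda-\lambda_0)(A-\tilde A)(V_1 - V_2)$ and $(\lambda-\lambda_0)B(V_1 - V_2)$, yields
\begin{equation*}
\|S(V_1) - S(V_2)\|_{E_\theta} + \|S(V_1) - S(V_2)\|_{E_0} \leq C\,|\lambda - \lambda_0|\,(C_A + \|\tilde A\| + C_B)\,\|V_1 - V_2\|_{E_0}.
\end{equation*}
Choosing $\delta$ so that $C\delta(C_A + \|\tilde A\| + C_B) \leq \tfrac12$, the map $S$ is a strict contraction on $E_\theta \cap E_0$ equipped with the sum norm, and its unique fixed point is a strong solution of \eqref{eq:stochcaselambda} at $\lambda$, hence $\lambda \in \Lambda$.

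Since $\delta$ is independent of $\lambda_0 \in \Lambda$, starting from $0 \in \Lambda$ and iterating at most $\lceil 1/\delta \rceil$ times covers the whole interval, so $\Lambda = [0,1]$. The estimate \eqref{eq:apropriori} for the constructed solution is then just a restatement of \eqref{it:methodconti}. The main obstacle is the bookkeeping to ensure that the contraction step operates in $E_\theta \cap E_0$ (rather than merely in $E_0$), which is handled by the fact that the a priori bound in \eqref{it:methodconti} controls both norms simultaneously, so the fixed point automatically lies in $E_\theta$.
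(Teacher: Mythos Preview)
Your proposal is correct and follows essentially the same approach as the paper's own proof: both set up the solvability set $\Lambda$, establish a fixed-point contraction of uniform step size $\delta$ (resp.\ $\varepsilon$) using the a priori estimate \eqref{eq:apropriori} applied to the difference $S(V_1)-S(V_2)$, with the identical constant $C(C_A+\|\tilde A\|+C_B)$, and then iterate to cover $[0,1]$. Your explicit remark that the right-hand side of the contraction estimate only involves $\|V_1-V_2\|_{E_0}$ (so the map is automatically a contraction for the sum norm on $E_\theta\cap E_0$) is a nice clarification of a point the paper leaves implicit.
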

In particular, the above result implies that if $(\tilde{A},0)\in \SMR(p,\alpha,T)$ and \eqref{it:methodconti} holds for all $\theta\in [0,\frac12)$, then $(A,B)\in\SMR(p,\alpha,T)$. Note that in \eqref{it:methodconti} we only assume that, as soon as a solution $U\in E_{\theta}\cap E_0$ to \eqref{eq:stochcaselambda} exists, then \eqref{eq:apropriori} holds.
\begin{proof}
The proof is a generalization of a standard method (see \cite[p.\ 218]{Kry}). We include the details for completeness. Note that uniqueness follows from \eqref{eq:apropriori}.
Let $\Lambda\subseteq [0,1]$ be the set of all points $\lambda$ such that for all $f\in Z_0$ and $g\in Z_{\frac12}^{\gamma}$ \eqref{eq:stochcaselambda} has a strong solution $U\in E_{\theta}$. It suffices to prove $1\in \Lambda$.
We claim that there exists an $\varepsilon>0$ such that for every $\lambda_0\in \Lambda$, $[\lambda_0-\varepsilon, \lambda_0+\varepsilon]\cap [0,1]\subseteq \Lambda$. Clearly, proving the claim would finish the proof.

To prove the claim let $\lambda_0 \in \Lambda$.
Fix $\lambda\in [\lambda_0-\varepsilon, \lambda_0+\varepsilon]\cap [0,1]$, where $\varepsilon>0$ is fixed for the moment. For $V\in E_{\theta}$, let $U\in E_{\theta}\cap E_{0}$ be the solution to
\begin{align*}
dU(t)  +  A_{\lambda_0}(t) U(t)\ud t  & =  [f(t)+(A_{\lambda_0}(t) - A_{\lambda}(t))V(t)]\ud t   \\ & \qquad + \big[B_{\lambda_0}(t) U(t) + g(t)+(B_{\lambda}(t) - B_{\lambda_0}(t)) V(t)\big] \ud W_H(t).
\end{align*}
In this case we write $L_{\lambda}(V) = U$. It is enough show that $L_{\lambda}:E_{\theta}\cap E_{0} \to FF_{\theta}\cap E_{0}$ is a strict contraction. Indeed, then by the Banach fixed point theorem there exists a unique $U\in E_{\theta}\cap E_{0}$ such that $L_{\lambda}(U) = U$ and this clearly implies that $U$ is a strong solution of \eqref{eq:stochcaselambda}.
To prove that $L$ is a strict contraction, let us note that for $V_1, V_2\in E_{\theta}\cap E_{0}$ and  $V = V_1-V_2$, the process $U = L_{\lambda}(V_1) - L_{\lambda}(V_2)$ is a strong solution to
\begin{align*}
dU(t)  +  A_{\lambda_0}(t) U(t)\ud t  & =  (A_{\lambda_0} - A_{\lambda}(t))V(t)\ud t   \\ & \ + \big[B_{\lambda_0}(t) U(t) +(B_{\lambda}(t) - B_{\lambda_0}(t)) V(t)\big] \ud W_H(t).
\end{align*}
Therefore, by \eqref{eq:apropriori}
\begin{align*}
\|L_{\lambda}(V_1)-L_{\lambda}(V_2)\|_{E_{\theta}\cap E_{0}} & \leq C \|(A_{\lambda_0} - A_{\lambda})V\|_{Z_0} + C\|(B_{\lambda_0} - B_{\lambda})V\|_{Z_\frac12^{\gamma}}
\\ & \leq \tilde{C}\varepsilon \|V_1 - V_2\|_{E_{0}}
\leq \tilde{C}\varepsilon \|V_1 - V_2\|_{E_{\theta \cap E_{0}}},
\end{align*}
where $\tilde{C} = C (C_A +\|\tilde{A}\|+C_B)$. Here we used
\begin{align*}
\|(A_{\lambda_0} - A_{\lambda})u\|_{X_0} & \leq |\lambda_0-\lambda| (C_A + \|\tilde{A}\|)\|u\|_{X_1},
\\ \|(B_{\lambda_0} - B_{\lambda})u\|_{X_{\frac12}} & \leq |\lambda_0-\lambda| C_B\|u\|_{X_1}.
\end{align*}
Therefore, letting $\varepsilon  = \frac{1}{2C (C_A +\|\tilde{A}\|+C_B)}$ we see that $L_{\lambda}$ is a strict contraction.
\end{proof}

\subsection{Semilinear equations}\label{subs:semil}
In this section we show that our maximal regularity set-up allows for simple perturbation arguments in order to include nonzero initial values and nonlinear functions $F$ and $G$ as in \eqref{eq:SEE} on a fixed time interval $I = (0,T)$ as soon as one knows that $(A,B)\in \SMR(p,\alpha,T)$. The results extend \cite[Theorems 4.5 and 5.6]{NVWsiam} to a setting where we only assume measurability in time and where we can take {\em rough} initial values.

Consider the following conditions on $A$:
\begin{assumption}\label{conditionMR}
Suppose Assumptions \ref{conditionX} and \ref{conditionA} hold. Assume $(A,B)\in \SMR(p,\alpha,T)$ and let $K_{\rm det}$ and $K_{\rm st}$ be such that
the strong solution to \eqref{eq:stochcase} satisfies
\[\|U\|_{L^p(\Omega;L^p(I,w_{\alpha};X_1))}\leq K_{\rm det} \|f\|_{L^p(\Omega;L^p(I,w_{\alpha};X_0))} + K_{\rm st} \|g\|_{L^p(\Omega;L^p(I,w_{\alpha};\gamma(H,X_{\frac12})))}.\]
\end{assumption}
Note that the constants $K_{\rm det}$ and $K_{\rm st}$ exists by the condition $(A,B)\in \SMR(p,\alpha,T)$. We introduce them in order to have more explicit bounds below.

Consider the following conditions on $F$ and $G$.
\begin{assumption}\label{as:LipschitzF}
The function $F:[0,T]\times\O\times X_1\to X_0$ is strongly
progressively measurable, $F(\cdot, \cdot,0)\in L^p(\Omega;L^p(I,w_{\alpha});X_0)$,
and there exist $L_{F}$ and $\tilde L_{F}$ such that for all $t\in [0,T]$,
$\omega\in \O$, and $x,y\in X_1$,
\begin{equation*}
\phantom{aaaaa}
\|F(t,\omega, x) - F(t,\omega,y)\|_{X_0} \leq L_{F} \|x-y\|_{X_1} +
\tilde L_{F} \|x-y\|_{X_0}
\end{equation*}
\end{assumption}

\begin{assumption}
\label{as:LipschitzG}
The function $G:[0,T]\times\O\times X_1\to \g(H,X_{\frac12})$
is strongly progressive measurable, $G(\cdot, \cdot, 0)\in L^p(\Omega;L^p(I,w_{\alpha});\g(H,X_{\frac12}))$ and there exist $L_{G}$, $\tilde L_{G}$ such that for all $t\in
[0,T]$, $\omega\in \O$, and $x,y\in X_1$,
\begin{equation*}
\phantom{aaaaa}
\|G(t,\omega, x) - G(t,\omega,y)\|_{\g(H,X_{\frac12})} \leq L_{G} \|x-y\|_{X_1} +
\tilde L_{G} \|x-y\|_{X_0}.
\end{equation*}
\end{assumption}

\begin{definition}\label{def:strongsol}
Suppose Assumptions \ref{conditionX}, \ref{conditionA}, \ref{as:LipschitzF} and \ref{as:LipschitzG} are satisfied. Let $u_0:\Omega\to X_0$ be strongly $\F_0$-measurable.
A process $U: [0,T]\times\Omega \to X_0$ is called a
{\em strong solution} of \eqref{eq:SEE} if it is strongly progressively measurable,
and
\begin{enumerate}[(i)]
\item almost surely, $U\in L^2(0,T;X_1)$;

\item almost surely for all $t\in [0,T]$, the following identity holds in $X_0$:
\begin{align*}
\phantom{aa}
U(t) + \int_0^t A(s) U(s) \, ds = u_0 & + \int_0^t F(s,U(s)) \, ds \\ & + \int_0^t
\Big(B(s)U(s) +G(s,U(s))\Big) \, d W_H(s).
\end{align*}
\end{enumerate}
\end{definition}
It is straightforward to check that all integrals are well-defined by the assumptions.

Now we state the main result of this subsection:
\begin{theorem}\label{thm:nonlinearperturbation}
Let $p\in [2, \infty)$ and $\alpha\in [0, \frac{p}{2}-1)$ ($\alpha =0$ is allowed if $p=2$) and $T\in (0,\infty)$. Set $\delta = 1-\frac{\alpha+1}{p}$ and $I = (0,T)$.
Consider the following conditions:
\begin{enumerate}[$(1)$]
\item Suppose Assumptions \ref{conditionMR}, \ref{as:LipschitzF}, \ref{as:LipschitzG} hold,  and $u_0\in L^p(\Omega,\F_0;X_{\delta,p})$.
\item Assume the Lipschitz constants $L_F$ and $L_G$
satisfy
$$K_{\rm det} L_F  + K_{\rm st} L_G <1.$$
\item There exists a sectorial operator $A_0$ on $X_0$ with $D(A_0)=X_1$ and angle $<\pi/2$.
\end{enumerate}
Then the following assertions hold:

Problem \eqref{eq:SEE} has a unique strong solution $U\in L^p_{\F}(\O;L^p(I,w_{\alpha};X_1))$.
Moreover, there exist constants $C, C_{\varepsilon}, C_{\varepsilon,\theta}$ depending on $X_0, X_1, p, \alpha, T,A,B,A_0, K_{\rm det}, K_{\rm st}$ and the Lipschitz constants of $f$ and $g$ such that
\begin{align}
\label{eq:nonlinperReg1a}
\|U\|_{L^p(\O;C(\overline{I}; X_{\delta,p}))}& \leq C K_{u_0,F,G},
\\ \label{eq:nonlinperReg1b}
\|U\|_{L^p(\O;C([\varepsilon,T]; X_{1-\frac1p,p}))}& \leq C_{\varepsilon} K_{u_0,F,G}, \ \ \varepsilon\in (0,T].
\\ \label{eq:nonlinperReg2} \|U\|_{L^p(\Omega;H^{\theta,p}(I,w_{\alpha};X_{1-\theta}))} &\leq C_{\theta}K_{u_0,F,G}, \ \ \theta\in [0,\tfrac12)
\\ \label{eq:nonlinperReg3} \|U\|_{L^p(\Omega;C^{\theta-\frac{1+\alpha}{p}}(\overline{I};X_{1-\theta}))}& \leq C_{\theta}K_{u_0,F,G}, \ \ \theta\in (\tfrac{1+\alpha}{p},\tfrac12)
\\ \label{eq:nonlinperReg4} \|U\|_{L^p(\Omega;C^{\theta-\frac{1}{p}}([\varepsilon,T];X_{1-\theta}))}&\leq C_{\varepsilon,\theta}K_{u_0,F,G}, \ \ \theta\in (\tfrac1p,\tfrac12), \varepsilon\in (0,T].
\end{align}
where
\begin{equation}\label{eq:Ku0FG}
\begin{aligned}
K_{u_0,F,G} &= \|u_0\|_{L^p(\Omega;X_{\delta,p})}+\|F(\cdot, \cdot, 0)\|_{L^p(\Omega;L^p(I,w_{\alpha},X_0))} \\ & \qquad + \|G(\cdot, \cdot, 0)\|_{L^p(\Omega;L^p(I,w_{\alpha},\gamma(H,X_{\frac12})))}.
\end{aligned}
\end{equation}
Furthermore, if $U^{1}, U^2$ are  the strong solution of \eqref{eq:SEE} with initial value $u_0^1,u_0^2\in L^p(\Omega,\F_0;X_{\delta,p})$ respectively, then each of the above estimates holds with $U$ replaced by $U^1-U^2$, and $K_{u_0,F,G}$ replaced by $K_{u_0^1-u_0^2,F,G}$ on the right-hand side.
\end{theorem}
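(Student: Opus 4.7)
The plan is to reduce to zero initial datum using the semigroup generated by $-A_0$ and then apply the Banach fixed-point theorem on the space $E_0 := L^p_{\F}(\Omega; L^p(I, w_\alpha; X_1))$. Shifting if necessary, we may assume $0 \in \rho(A_0)$; then $v(t) := e^{-tA_0} u_0$ lies in $L^p(\Omega; L^p(I, w_\alpha; X_1) \cap W^{1,p}(I, w_\alpha; X_0))$ by the weighted trace-space characterization underlying Proposition~\ref{prop:traceembedding}, with norm controlled by $\|u_0\|_{L^p(\Omega; X_{\delta,p})}$. Setting $\tilde U := U - v$, the problem for $\tilde U$ has zero initial value and modified nonlinearities $\tilde F(t, x) := F(t, x + v(t)) - (A(t) - A_0) v(t)$ and $\tilde G(t, x) := G(t, x + v(t)) + B(t) v(t)$; these still obey Assumptions~\ref{as:LipschitzF} and~\ref{as:LipschitzG} with the same Lipschitz constants, and $\tilde F(\cdot, 0)$, $\tilde G(\cdot, 0)$ have the required $L^p$ norms bounded by $K_{u_0, F, G}$.

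Given $V \in E_0$, let $\Phi(V)$ be the strong solution of the linear problem $dU + A U\, dt = \tilde F(\cdot, V)\, dt + (B U + \tilde G(\cdot, V))\, dW_H$, $U(0) = 0$, which exists in $E_0$ (and in the fuller scale of Definition~\ref{def:SMR}) by the assumption $(A, B) \in \SMR(p, \alpha, T)$ in Assumption~\ref{conditionMR}. From $\SMR$ and the Lipschitz hypotheses one obtains
\begin{align*}
\|\Phi(V_1) - \Phi(V_2)\|_{E_0}
&\leq (K_{\rm det} L_F + K_{\rm st} L_G)\|V_1 - V_2\|_{E_0} \\
&\quad + (K_{\rm det} \tilde L_F + K_{\rm st} \tilde L_G)\|V_1 - V_2\|_{L^p(\Omega \times I, w_\alpha; X_0)}.
\end{align*}
The first coefficient is strictly below $1$ by hypothesis. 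The lower-order term is controlled by first establishing local existence on a short initial interval $[0, T_0]$: since the fixed-point iterates vanish at $t = 0$, a weighted Poincar\'e-type inequality $\|V\|_{L^p((0, T_0), w_\alpha; X_0)} \leq c(T_0) \|V\|_{W^{1, p}((0, T_0), w_\alpha; X_0)}$ with $c(T_0) \to 0$ as $T_0 \to 0$ lets us absorb this term into the main norm. One then iterates on $[kT_0, (k+1)T_0]$, restarting from $U(kT_0) \in L^p(\Omega, \F_{kT_0}; X_{\delta, p})$ which is controlled via the trace embedding in Proposition~\ref{prop:traceembedding}; $\SMR$ on the shifted subintervals follows from restriction of $\SMR(p,\alpha,T)$. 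The Banach fixed-point theorem then delivers a unique $\tilde U \in E_0$, and $U := v + \tilde U$ is the required strong solution.

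The regularity statements \eqref{eq:nonlinperReg1a}--\eqref{eq:nonlinperReg4} follow by feeding $U$ back into the full $\SMR$ estimate of Definition~\ref{def:SMR} (which gives \eqref{eq:nonlinperReg2}) and then applying the trace embedding of Proposition~\ref{prop:traceembedding} (for \eqref{eq:nonlinperReg1a} and \eqref{eq:nonlinperReg1b}) and the Sobolev embedding of Proposition~\ref{prop:Sobolevemb} (for \eqref{eq:nonlinperReg3} and \eqref{eq:nonlinperReg4}). The statements on $[\varepsilon, T]$ are obtained by a cutoff argument exploiting that $w_\alpha$ is bounded above and below away from $0$ on such intervals. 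For the Lipschitz dependence on the initial datum, $U^1 - U^2$ satisfies a problem of the same form with forcing $F(\cdot, U^1) - F(\cdot, U^2)$, diffusion $G(\cdot, U^1) - G(\cdot, U^2)$, and initial value $u_0^1 - u_0^2$; the same scheme then yields the stated estimates with $K_{u_0, F, G}$ replaced by $K_{u_0^1 - u_0^2, F, G}$.

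The main obstacle I expect is the treatment of the lower-order Lipschitz constants $\tilde L_F$ and $\tilde L_G$: in the time-independent setting of \cite{NVWsiam} this is standard, but here the fact that $A, B$ are only progressively measurable in $(t, \omega)$ means that one must carefully verify that $\SMR(p, \alpha, T)$ restricts to $\SMR$ on subintervals with uniformly controlled constants, and that the trace embedding used at each restart is compatible with the weight shift. Both are routine but require careful bookkeeping of constants to ensure the iteration terminates in finitely many steps covering $[0, T]$.
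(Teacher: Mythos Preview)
Your overall strategy matches the paper's: reduce to $u_0=0$ via $e^{-tA_0}u_0$, run a Banach fixed-point on $E_0=L^p_{\F}(\Omega;L^p(I,w_\alpha;X_1))$, go local-then-global, and read off the regularity from the full $\SMR$ estimate plus Propositions~\ref{prop:traceembedding} and~\ref{prop:Sobolevemb}.

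There is, however, a genuine gap in how you absorb the lower-order Lipschitz term. You propose to use a Poincar\'e-type inequality $\|V\|_{L^p((0,T_0),w_\alpha;X_0)}\le c(T_0)\|V\|_{W^{1,p}((0,T_0),w_\alpha;X_0)}$ on the \emph{argument} $V=V_1-V_2$. But $V_1,V_2$ are arbitrary elements of $E_0=L^p_\F(\Omega;L^p(I,w_\alpha;X_1))$; they have no time-regularity whatsoever and need not vanish at $t=0$, so neither the $W^{1,p}$-norm nor the Poincar\'e inequality is available for them. (The stochastic part of the equation only yields $H^{\theta,p}$ for $\theta<\tfrac12$ anyway, so even images of $\Phi$ are not in $W^{1,p}$ in time.) Consequently the $X_0$-term in your contraction estimate cannot be absorbed into the $E_0$-norm in this way.

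The paper repairs exactly this point by equipping $E_0$ with the equivalent norm $|||\phi|||=\|\phi\|_{Z_1}+M\|\phi\|_{Z_0}$ (with $M=(1-\nu)^{-1}(K_{\rm det}\tilde L_F+K_{\rm st}\tilde L_G)$) and then estimating $\|\Phi(V_1)-\Phi(V_2)\|_{Z_0}$, i.e.\ the $X_0$-norm of the \emph{image}, rather than of $V_1-V_2$. Since $U=\Phi(V_1)-\Phi(V_2)$ is a strong solution of a linear problem with zero initial value, Lemma~\ref{lem:simpleestt} gives a pointwise bound $\|U(t)\|_{L^p(\Omega;X_0)}\le C_\kappa(\|U\|_{Z_1}+\|\phi_1-\phi_2\|_{Z_1})$, and integrating over $t\in(0,\kappa)$ produces the small factor $c(\kappa)\to0$. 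This yields $|||\Phi(V_1)-\Phi(V_2)|||\le(1-\nu+Mc(\kappa))|||V_1-V_2|||$, which is a contraction for $\kappa$ small. Once you make this change, the rest of your outline (iteration on subintervals, regularity bootstrap, continuous dependence) goes through as you wrote it and coincides with the paper's Steps~2--4.
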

\begin{proof}
In the proof we use a variation of the arguments in \cite[Theorems 4.5]{NVWsiam}. Let us assume, without loss of generality that $L_{F},L_{G} \neq 0$, and that
$K_{\rm det} L_F  + K_{\rm st} L_G = 1-\nu$ for some $\nu\in (0,1)$.

{\em Step 0:} Reduction to $u_0 = 0$.
We consider $\Phi: t \mapsto e^{-tA_0} u_0$. Since $u_{0} \in X_{\delta,p}$, we have, by \cite[1.14.5]{Tr1}, that
\[\|\Phi\|_{L^p(I,w_{\alpha};X_1)} + \|\Phi\|_{W^{1,p}(I,w_{\alpha};X_0)} \leq C \|u_0\|_{X_{\delta,p}}.\]
Moreover, $\Phi$ is strongly progressively measurable, and Proposition \ref{prop:funcspacesUMD} gives that \begin{equation}\label{eq:initial}
\|\Phi\|_{L^p(\Omega,H^{\theta,p}(I,w_{\alpha};X_{1-\theta}))}\leq C \|u_0\|_{L^p(\Omega,X_{\delta,p})},
\end{equation}
for all $\theta\in [0,1]$.

Similarly, $\Phi$ can be estimated in all of the norms used in \eqref{eq:nonlinperReg1a}-\eqref{eq:nonlinperReg4}, by a constant multiple of
$\|u_0\|_{L^p(\Omega,X_{\delta,p})}$.
The process $V := U - \Phi$ is then a solution of
\[dV(t) + A(t)V(t) \ud t = \tilde{F}(t,V(t))\ud t +  [B(t) V(t) + \tilde{G}(t,V(t))]\ud W_H(t), \ \ V(0) = 0 \]
where $\tilde{F}(t,x) = F(t,x+\Phi(t))$ and
$\tilde{G}(t,x) = B(t) \Phi(t) + G(t,x+\Phi(t))$ satisfy the same conditions as $F$ and $G$. This completes the reduction to $u_{0}=0$.

\smallskip
{\em Step 1:} Local existence and uniqueness.
We first prove existence in spaces of the form
\begin{align*}
Z_{\theta,\kappa} & = L^p_{\F}(\O;L^p((0,\kappa),w_{\alpha};X_\theta)),\\
Z^\gamma_{\theta,\kappa} & = L^p_{\F}(\O;L^p((0,\kappa),w_{\alpha};\g(H,X_{\theta}))),
\end{align*}
where $\kappa\in (0,T)$ will be determined later, and $\theta\in [0,1]$.
To simplify notation, we omit the parameter $\kappa$, and consider the
norm $\nnn\cdot\nnn$ on $Z_{1}$, defined by
\[\nnn\phi\nnn = \|\phi\|_{Z_{1}} + M  \|\phi\|_{Z_{0}}\]
with $M = (1-\nu)^{-1}(K_{\rm det} \tilde L_{F} + K_{\rm st} \tilde L_{B})$.

For $\phi\in Z_1$ we consider the linearised problem
\begin{equation}\label{eq:SEEfixed}
\left\{\begin{aligned}
dU(t)  +  A(t) U(t)\ud t & =  F(t,\phi(t)) \ud t  +  \big(B(t) U(t)  +G(t,\phi(t))\big) \ud W_H(t),\\
 U(0) & = 0.
\end{aligned}
\right.
\end{equation}
By Assumptions \ref{as:LipschitzF} and \ref{as:LipschitzG}, we have that $F(\cdot, \phi)\in L^p_{\F}(\Omega;L^p(I,w_{\alpha};X_0))$ and $G(\cdot, \phi)\in L^p_{\F}(\Omega;L^p(I,w_{\alpha};\gamma(H,X_{\frac12}))$. Therefore, by Assumption \ref{conditionMR},  there exists a bounded map $L:Z_1\to Z_1$ such that $L(\phi)$ is the (unique) strong solution of \eqref{eq:SEEfixed}.

By linearity, we thus have that, for $\phi_1,\phi_2\in Z_1$, the process $U = L(\phi_1)-L(\phi_2)$ is a strong solution of
\begin{align}\label{eq:hulpfixed}
dU(t)  +  A(t) U(t)\ud t & =  f(t) \ud t  +  \big(B(t) U(t)  +g(t)\big) \ud W_H(t), \ \ u(0) = 0,
\end{align}
where $f = F(\cdot,\phi_1) - F(\cdot,\phi_2)$ and $g = G(\cdot,\phi_1) - G(\cdot,\phi_2)$.
Therefore, by Assumptions \ref{conditionMR}, \ref{as:LipschitzF} and \ref{as:LipschitzG},
\begin{align*}
\|L(\phi_1)-L(\phi_2)\|_{Z_1} & = \|U\|_{Z_1}
\\ & \leq K_{\rm det} \|f\|_{Z_0} + K_{\rm st} \|g\|_{Z_{\frac12}^\g}
\\ & = K_{\rm det} \|F(\cdot, \phi_1)-F(\cdot,
\phi_2)\|_{Z_0} + K_{\rm st} \|G(\cdot, \phi_1) - G(\cdot,
\phi_2)\|_{Z_{\frac12}^\g}
\\ & \leq K_{\rm det} L_F \|\phi_1-\phi_2\|_{Z_1} + K_{\rm det} \tilde L_{F}
\|\phi_1-\phi_2\|_{Z_0} \\ & \qquad   + K_{\rm st} L_G \|\phi_1 -
\phi_2\|_{Z_1} + K_{\rm st} \tilde L_{G} \|\phi_1 - \phi_2\|_{Z_{0}}
\end{align*}
We thus have that
\begin{equation}
\label{eq:Z1est}
\|L(\phi_1)-L(\phi_2)\|_{Z_1}\leq  (1-\nu) \nnn \phi_1-\phi_2\nnn,
\end{equation}
by definition of $\nu$ and $\nnn \cdot\nnn$.

Moreover, for fixed $t\in [0,\kappa]$, we have that
\begin{align*}
\|L(\phi_1)(t)-L(\phi_2)(t)\|_{L^p(\Omega;X_0)} & = \|U(t)\|_{L^p(\Omega;X_0)}
\\ & \leq C_A C_{\kappa,1} \|U\|_{Z_{1}} + C_{\kappa,1}\|f\|_{Z_{0}}  \\ & \qquad +  C_B C_X C_{\kappa,1}\|U\|_{Z_{1}}  + C_{\kappa,1} C_X \|g\|_{Z_{\frac12}^{\gamma}},
\end{align*}
using \eqref{eq:hulpfixed} and the easy part of Lemma \ref{lem:simpleestt}. Here $C_X$ denotes the embedding constant of $X_{1/2}\hookrightarrow X_0$, and $C_A, C_{B}$ are the constants in Assumption \ref{conditionA}.
Then, using first H\"older's inequality, Assumptions \ref{as:LipschitzF} and \ref{as:LipschitzG}, and estimate \eqref{eq:Z1est} next, we have that
\begin{align*}
\|L(\phi_1)(t)-L(\phi_2)(t)\|_{L^p(\Omega;X_0)}
& {\leq} C_{\kappa,2} \|U\|_{Z_1} + C_{\kappa,2} \|\phi_1-\phi_2\|_{Z_1},
\\ & {\leq} C_{\kappa} \|\phi_1-\phi_2\|_{Z_1}.
\end{align*}

Taking $L^p((0,\kappa), w_{\alpha})$-norms in $t$ on both sides we obtain
\[\|L(\phi_1)-L(\phi_2)\|_{Z_0}\leq c(\kappa)\nnn \phi_1-\phi_2\nnn,\]
and thus
\[\nnn L(\phi_1)-L(\phi_2) \nnn \leq (1-\nu + Mc(\kappa)) \nnn \phi_1-\phi_2\nnn,\]
with $\lim_{\kappa \downarrow 0}c(\kappa) = 0$.
Setting
\[\kappa : = \inf\{t\in (0,T]: \, Mc(t) \geq \tfrac12\nu\},\]
(or $\kappa = T$ if the infimum is taken over the empty set), we have that $(1-\nu + Mc(\kappa))\leq 1-\tfrac12\nu$,
and therefore that $L$ has a unique fixed point $U \in Z_1$.
The end time $\kappa$ only depends on $\nu$, $p$, $\alpha$, the constants $C_A, C_B, K_{\rm det}, K_{\rm st}$, the Lipschitz constants of $F$ and $B$,  and the spaces $X_0$ and $X_1$.

Considering a version of  $U$ with continuous paths (see comment below \eqref{eq:generalMRStoch}), we can assume that, for all  $t\in [0,\kappa]$, $U(t)=L(U(t))$ holds almost surely. The process $U$ is thus the unique strong solution of \eqref{eq:SEE}, and satisfies
\[\|U\|_{Z_{1}} = \|L(U)\|_{Z_{1}}\leq \|L(U) - L(0)\|_{Z_{1}} + \|L(0)\|_{Z_{1}}\leq (1-\tfrac12 \nu) \|U\|_{Z_{1}} + K_{u_0, F,G},\]
which gives
\begin{align*}
\|U\|_{Z_{1,\kappa}} \leq C K_{u_0,F,G},
\end{align*}

\smallskip
{\em Step 2: Regularity}.
Let $S \in (0,T)$, and $U$ be a strong solution of \eqref{eq:SEE} on the time interval $J=[0,S]$. Assume that
\begin{align}\label{eq:hulpmaxregLp}
\|U\|_{Z_{1,S}} \leq C K_{u_0,F,G}.
\end{align}
Then, from Step $0$ and $(A,B)\in\SMR(p, \alpha, S)$, we obtain, for $U=L(U)$:
\begin{align*}
\|U\|_{L^p(\Omega;H^{\theta,p}(J,w_{\alpha};X_{1-\theta}))}  & \leq C\|u_0\|_{L^p(\Omega;X_{\delta,p})}  + C\|F(\cdot, U)\|_{Z_{0,S}}  + C\|G(\cdot, U)\|_{Z_{\frac12,S}^{\gamma}}
\\ & \leq CK_{u_0, F, G} + C_{F,G}\|U\|_{Z_{1,S}}
\\ & \leq C\tilde{K}_{u_0, F, G},
\end{align*}
for all $\theta\in [0,\frac12)$, using Assumptions \ref{as:LipschitzF}, \ref{as:LipschitzG} and \eqref{eq:hulpmaxregLp}.  This proves \eqref{eq:nonlinperReg2} for $I = J$. Thus \eqref{eq:nonlinperReg3} and \eqref{eq:nonlinperReg4} for $I = J$ follows from Proposition \ref{prop:Sobolevemb}. Finally, \eqref{eq:nonlinperReg1a} and \eqref{eq:nonlinperReg1b} follow from Proposition \ref{prop:traceembedding}.

\smallskip
{\em Step 3: Global existence and uniqueness}.

To prove global existence and uniqueness let $S \in (0,T)$, and $U \in Z_{1,S}$ be a strong solution of \eqref{eq:SEE} on the time interval $J=[0,S]$. To obtain global existence, we just have to show that there exist an $\eta>0$ (independent of $S$) and a strong solution $U \in Z_{1,S+\eta}$  on the interval $[S, S+\eta]$, with initial condition $U(S)$ at time $S$.
By Step 2, we have that, for every $\varepsilon\in (0,S)$,
$$\|U\|_{L^p(\O;C([\varepsilon,S]; X_{1-\frac1p,p}))} \leq C_{\varepsilon} K_{u_0,F,G}.$$
We can thus define $V(t) = U - e^{-(t-S)A_0} U(S)$ and, reduce the problem to $V(S)=0$, as in Step 0.
Repeating Step 1, we find an $\eta>0$ (depending on the parameters only as $\nu$ did) and a unique strong solution $U\in L^p(\Omega;L^p((S, S+\eta);X_1))$. The regularity estimates \eqref{eq:nonlinperReg1a}-\eqref{eq:nonlinperReg4} then follow from Step 2, and global existence and uniqueness is proven by repeating this procedure finitely many times.

\smallskip
{\em Step 4: Continuous dependence}.
For $\kappa$ as in Step 1, the process $U = U^1 - U^2$ is a strong solution of
\begin{align*}
dU(t)  +  A(t) U(t)\ud t & =  f(t) \ud t  +  \big(B(t) U(t)  +g(t))\big) \ud W_H(t), \ \ u(0) = u_0^1-u_0^2,
\end{align*}
with $f = F(\cdot,U^1) - F(\cdot,U^2)$ and $g = F(\cdot,U^1) - F(\cdot,U^2)$.
Repeating Step 1, we have that
\[\nnn U^1 - U^2\nnn \leq (1-\frac12\nu) \nnn U^1 - U^2\nnn + \|u_0^1 - u_0^2\|_{L^p(\Omega;X_{\delta,p})},\]
and thus
\[\|U^1- U^2\|_{Z_{1,\kappa}} \leq C\|u_0^1 - u_0^2\|_{L^p(\Omega;X_{\delta,p})}.\]
Step 2 then gives the regularity estimates, while Step 3 extends the result from $[0,\kappa]$ to $[0,T]$, which concludes the proof.
\end{proof}

\begin{remark}
The results of Theorem \ref{thm:nonlinearperturbation} can be further ``localized'' to include non-integrable initial values, and locally Lipschitz functions $F$ and $G$. We refer to \cite[Theorem 5.6]{NVWsiam} for details.
\end{remark}

\subsection{Reduction to $B = 0$}\label{subs:B=0}
Before we continue to applications to SPDEs we show that there is a setting in which one can reduce to the case where $B = 0$ by using It\^o's formula. Such a reduction is standard
(cf.\ \cite[Theorem 3.1]{BNVW}, \cite[Section 6.6]{DPZ} and \cite[Section 4.2]{Kry}), but it seems that the general setting below has never been considered before. It leads to an abstract form of a so-called stochastic parabolicity condition. In the variational setting ($p=2$) a stochastic parabolicity condition appears in a more natural way (see \cite{LiuRock,Rozov}). We refer to \cite{BrzVer,DuLiuZhang,KimLeesystems,KryLot} for situations in which one cannot reduce to $B = 0$, but where one still is able to introduce a natural $p$-dependent
stochastic parabolicity condition.

\begin{assumption}
\label{conditionB}
Let $B:\R_+\times \O\to \calL(X_1, \calL(H,X_{\frac12}))$ be given by
\begin{align}\label{eq:defB}
B(t)x = \sum_{j=1}^J b_j(t) \otimes B_jx
\end{align}
Here $b_j:\R_+\times\O\to H$ is progressively measurable and there is a constant $M_0\geq 0$ such that for almost all $(t,\omega)$, $\|b_j(t,\omega)\|_H\leq M_0$.
Each of the operators $B_j$ generates a strongly continuous group on $X_k$ for $k\in \{0,1\}$ and there exists an $M_1\geq 0$ such that for all $t\in \R$, $\|e^{tB_j}\|_{\calL(X_k)}\leq M_1$ for $k\in \{0,1\}$.
For every $i,j\in\{1, \ldots, J\}$ and $s,t\in \R$, $e^{sB_i}$ and $e^{tB_j}$ commute on $X_0$, and $e^{sB_i}$ and $A(t)$  commute on $X_1$. Furthermore assume $X_{\frac12} \subseteq  D(B_j)$ and $X_1 \subseteq D(B_j^2)$.

The adjoints $A(t,\omega)^*$ are closed operators on $X_0^*$ and have a constant domain $D_{A^*}$ such that $D_{A^*}\subseteq D((B_j^*)^2)$.
\end{assumption}

Let $\alpha \in [0,1]$, $S_B:\R^J\to \calL(X_\alpha)$ and $\zeta:\R_+\times\O\to \R^J$ be given by
\begin{align}\label{eq:SBazeta}
S_B(a) = \exp\Big(\sum_{j=1}^J a_j B_j\Big), \qquad \text{and} \qquad \zeta_j(t) = \int_0^t b_j \ud W_H.
\end{align}

Consider the problem
\begin{equation}\label{eq:tildeUeq}
\begin{aligned}
\ud \wt{U}(t) + \wt{A}(t) \wt{U}(t) \ud t & = \Big(\wt{f}(t) - [B(t), \wt{g}(t)]\Big) \ud t  +  \wt{g}(t) \ud  W_H(t)
\\ \wt{U}(0) & = u_0,
\end{aligned}
\end{equation}
where $\wt{f}(t) = S_B(-\zeta(t)) f(t)$ and $\wt{g}(t) = S_B(-\zeta(t)) g(t)$ and $\wt{A}(t) = A(t) + \frac12 [B(t), B(t)]$ with
\begin{align}\label{eq:BBoperator}
[B(t), B(t)] = \sum_{i,j=1}^J (b_i(t), b_j(t))_H B_i B_j, \ \ \ [B(t), \wt{g}(t)] = \sum_{j=1}^J b_j(t) B_j \wt g(t)
\end{align}
Usually $[B(t), B(t)]$ is of ``negative'' type, whereas $A(t)$ is of ``positive'' type

Next we show that the problems \eqref{eq:stochcase} and \eqref{eq:tildeUeq} are equivalent under the above commutation conditions.
\begin{theorem}\label{thm:redB0}
Suppose Assumptions \ref{conditionX}, \ref{conditionA} and \ref{conditionB} hold.  Let $\wt U:[0,T]\times\O\to \gamma(H,X_{1})$ be progressively measurable and assume $\wt U\in L^2(0,T;\gamma(H,X_{1})$ a.s.\ Let ${U}(t) = S_B(\zeta(t)) \wt U(t)$, where $S_B$ and $\zeta$ are as in \eqref{eq:SBazeta}. Then $U$ is a strong solution to \eqref{eq:stochcase} if and only if $\wt{U}$ is a strong solution to \eqref{eq:tildeUeq}.
Moreover, $(A,B)\in \SMR(p,\alpha,T)$ if and only if $(\tilde{A}, 0)\in \SMR(p,\alpha,T)$.
\end{theorem}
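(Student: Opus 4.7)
The plan is to verify directly via Itô's formula that $U := S_B(\zeta)\tilde U$ solves \eqref{eq:stochcase} whenever $\tilde U$ solves \eqref{eq:tildeUeq} (and conversely), and then to deduce the SMR equivalence from the pathwise boundedness of the linear bijection $\tilde U \mapsto S_B(\zeta)\tilde U$. Throughout, commutativity of the $B_j$ among themselves and with $A(t)$ is what makes everything go: $S_B(a) = \prod_j e^{a_j B_j}$, so
\[
\partial_{a_j} S_B(a) = B_j S_B(a) = S_B(a) B_j, \qquad \partial_{a_i}\partial_{a_j} S_B(a) = B_i B_j S_B(a),
\]
and $S_B(a)$ commutes with $A(t)$ on $X_1$ and with $[B,B]$, hence with $\tilde A(t)$.

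Assuming $\tilde U$ solves \eqref{eq:tildeUeq}, I would apply the vector-valued Itô formula to $F(\zeta(t), \tilde U(t))$ with $F(a,u) = S_B(a) u$. With $d\zeta_j = b_j\,dW_H$, the scalar covariations $d[\zeta_i,\zeta_j]_t = (b_i,b_j)_H\,dt$, and the cross variation $d[\zeta_j,\tilde U]_t = \tilde g(t) b_j(t)\,dt$ (from the $\tilde g\,dW_H$-part of $\tilde U$), this produces four groups of terms. In the diffusion channel, $S_B(\zeta)\tilde g\,dW_H + \sum_j (B_j S_B(\zeta)\tilde U)\otimes b_j\,dW_H$ reassembles to $(g + B(t) U)\,dW_H$ via \eqref{eq:defB} and the identity $g = S_B(\zeta)\tilde g$. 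In the drift channel, two cancellations occur: (a) the quadratic-variation contribution $\tfrac12 \sum_{i,j}(b_i,b_j)_H B_i B_j U\,dt = \tfrac12[B(t),B(t)] U\,dt$ cancels the piece hidden in $-S_B(\zeta)\tilde A \tilde U = -\tilde A U = -AU - \tfrac12[B,B] U$; (b) the cross-variation contribution $\sum_j B_j S_B(\zeta)\tilde g(b_j)\,dt = [B(t), g(t)]\,dt$ cancels $-S_B(\zeta)[B,\tilde g] = -[B(t), g(t)]$ coming from the drift correction built into \eqref{eq:tildeUeq}. What remains is $dU + A(t) U\,dt = f\,dt + (B(t) U + g)\,dW_H$, which is \eqref{eq:stochcase}. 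The converse follows either by repeating the calculation for $\tilde U = S_B(-\zeta)U$, or by invoking pathwise invertibility of the map.

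For the SMR equivalence, the group bounds $\|e^{sB_j}\|_{\calL(X_k)}\leq M_1$ combined with commutativity give $\|S_B(\zeta(t,\omega))\|_{\calL(X_k)}\leq M_1^J$ uniformly in $(t,\omega)$ for $k\in\{0,1\}$, and by interpolation on $X_{1-\theta}$. The data transformation $(f,g)\mapsto(\tilde f,\tilde g)=(S_B(-\zeta)f, S_B(-\zeta)g)$ therefore preserves $L^p_\F(\Omega\times I,w_\alpha;X_0)$ and $L^p_\F(\Omega\times I,w_\alpha;\gamma(H,X_{\frac12}))$ up to equivalent norms, while the extra drift $[B,\tilde g]$ lies in $L^p(\Omega\times I,w_\alpha;X_0)$ thanks to the hypothesis $B:X_1\to\calL(H,X_{\frac12})$ together with $X_{\frac12}\subseteq D(B_j)$. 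The pathwise identity $U=S_B(\zeta)\tilde U$ maps the SMR solution class to itself: $L^p(I,w_\alpha;X_{1-\theta})$-norms are preserved pointwise, and the fractional norms $H^{\theta,p}(I,w_\alpha;X_{1-\theta})$ with $\theta\in[0,\tfrac12)$ are preserved because $\zeta$ has paths in $C^{\alpha}_{\rm loc}$ for every $\alpha<\tfrac12$, so $t\mapsto S_B(\zeta(t))$ is operator-norm H\"older of the same exponent and acts as a bounded pointwise multiplier on $H^{\theta,p}$ for $\theta<\alpha$.

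The main obstacle I expect is the bookkeeping in Itô's formula, where the operator-valued quadratic- and cross-variation terms produced by $\zeta$ must combine to absorb the drift corrections $\tfrac12[B,B]$ and $-[B,\tilde g]$ written into \eqref{eq:tildeUeq}; verifying that these terms match exactly is the heart of the reduction, and it is precisely what forces the definitions of $\tilde A$ and of the drift in \eqref{eq:tildeUeq}. The remainder of the argument, including the SMR equivalence, then reduces to the uniform boundedness of $S_B(\zeta)$ on each $X_k$ plus the above fractional-regularity remark.
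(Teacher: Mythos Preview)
Your formal It\^o bookkeeping is correct, and you have identified the two cancellations that make the reduction work. However, there are two technical issues.

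First, applying the vector-valued It\^o formula directly to $F(a,u)=S_B(a)u$ is not justified as stated. The process $\tilde U$ is an $X_0$-valued semimartingale (its drift lies only in $X_0$), but $a\mapsto S_B(a)u$ is $C^2$ into $X_0$ only for $u\in X_1$; as a map $\R^J\times X_0\to X_0$ it fails to be differentiable in~$a$ because strongly continuous groups are not norm-continuous. The paper handles this by passing to a weak formulation: one fixes $\psi\in D_{A^*}$, sets $\phi(a)=S_B(a)^*\psi$, and notes that $D_{A^*}\subseteq D((B_j^*)^2)$ makes $\phi:\R^J\to X_0^*$ genuinely $C^2$. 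It\^o's formula is then applied first to the finite-dimensional process $\phi(\zeta)$ and then to the scalar pairing $\langle \tilde U,\phi(\zeta)\rangle=\langle U,\psi\rangle$; Hahn--Banach recovers the strong identity. Your computation is the formal dual of this, but the duality trick is what makes it rigorous.

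Second, your argument for preservation of the $H^{\theta,p}(I,w_\alpha;X_{1-\theta})$ norms contains an error: the claim that $t\mapsto S_B(\zeta(t))$ is operator-norm H\"older on $X_{1-\theta}$ is false in general, again because $e^{sB_j}$ is only strongly continuous in~$s$. What one does have is $\|S_B(a)-S_B(b)\|_{\calL(X_1,X_{1-\theta})}\lesssim |a-b|^{2\theta}$ (interpolate between the trivial bound on $\calL(X_1,X_1)$ and the Lipschitz bound on $\calL(X_1,X_{1/2})$ coming from $B_j:X_1\to X_{1/2}$), and this, combined with the full family of bounds $\tilde U\in L^p(\Omega;L^p(I,w_\alpha;X_1))\cap L^p(\Omega;H^{\theta',p}(I,w_\alpha;X_{1-\theta'}))$, can be made to work. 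The paper itself simply declares the SMR equivalence ``clear from the properties of $S_B(\pm\zeta(t))$'' without spelling this out.
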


\begin{proof}
Fix $\psi\in D_{A^*}$ and let $\phi:\R^J\to D_{A^*}$ be given by $\phi(a) = S_B(a)^*\psi$.
First assume $\wt{U}$ is a strong solution to \eqref{eq:tildeUeq}.
The aim of this first step it to apply It\^o calculus to find a formula for $\lb U, \psi\rb = \lb \wt{U}, \phi(\zeta)\rb$.

As a first step we apply It\^o's formula to $\phi(\zeta)$.
For this note that the operators $B_j^*$ are commuting as well. Moreover, one can check that $e^{sB_j^*}$ leaves $D_{A^*}$ invariant and since $D_{A^*}\subset D((B_j^*)^2)$, it follows that $\phi$ is twice continuously differentiable and
\[(\nabla \phi(a))_j  = B_j^* \phi(a),  \ \ \ (\nabla^2\phi(a))_{i,j} = B_i^* B_j^* \phi(a).\]
By It\^o's formula (see \cite{BNVW}) a.s.\ for all $t\in [0,T]$,
\[\phi(\zeta(t)) - \psi = \int_0^t B(s)^*  \phi(\zeta(s)) \ud W_H(s) + \frac{1}{2} \int_0^t [B(s),B(s)]^* \phi(\zeta(s))  \ud s,\]
where $[B(s),B(s)]^*$ stands for the adjoint of $[B(s),B(s)]$ (see \eqref{eq:BBoperator}).

Now applying It\^o's formula to the duality pairing on $X\times X^*$ gives a.s.\ for all $t\in [0,T]$
\begin{align*}
\lb U(t), \psi\rb - \lb u_0, \psi\rb & = \lb \wt{U}(t), \phi(\zeta(t))\rb - \lb u_0, \psi\rb
\\ & = \int_0^t -\lb \wt{A}(s) \wt{U}(s), \phi(\zeta(s))\rb + \lb \wt{f}(s) - [B(s), \wt{g}(s)], \phi(\zeta(s))\rb \ud s
\\ & \quad + \frac12 \int_0^t \lb \wt{U}(s), [B(s),B(s)]^* \phi(\zeta(s))\rb \ud s
\\ & \quad + \int_0^t \wt{g}(s)^* \phi(\zeta(s)) \ud W_H(s) + \int_0^t \lb \wt{U}(s), B(s)^* \phi(\zeta(s))\rb \ud W_H(s)
\\ & \quad + \int_0^t   \lb [B(s), \wt{g}(s)], \phi(\zeta(s))\rb  \ud s
\\ & = \int_0^t -\lb A(s) U(s), \psi\rb + \lb f(s), \psi\rb \ud s
\\ & \quad + \int_0^t g(s)^* \psi \ud W_H(s) + \int_0^t (B(s) U(s))^* \psi \ud W_H(s),
\end{align*}
where we used Assumption \eqref{conditionB} to commute $S_B(a)$ with $B_j$ and $A(s)$. By Hahn--Banach it follows that $U$ is strong solution of \eqref{eq:SEE}.

Similarly, if $U$ is a strong solution to \eqref{eq:SEE} one sees that $\wt{U}$ is a strong solution to \eqref{eq:tildeUeq} by applying It\^o's formula to
$\lb \wt{U}, \phi(\zeta)\rb$, where now $\phi:\R^J\to \Psi$ is given by $\phi(a) = S_B(-a)^*\psi$.

The final assertion is clear from the properties of $S_B(\pm \zeta(t))$.
\end{proof}

\section{Parabolic systems of SPDEs of $2m$-th order}\label{sec:2morder}

In this section we develop an $L^p(L^q)$-theory for systems of SPDEs of order $2m$. The case $m=1$ (where more can be proven) will be considered in Section \ref{sec:secondorder}. A similar setting was considered in \cite[Section 6]{NVWsiam} but under a regularity assumption on the coefficients of the operator $A$.  At the same time it extends some of the results in \cite{Kry}. For more discussion on this we refer to Section \ref{sec:secondorder}.
Finally we mention that the temporal weights allow us to obtain an $L^p(L^q)$-theory for a wider class of initial values than usually considered.

The main novelty in the results below are that the coefficients $a_{\alpha\beta}$ are allowed to be matrix-valued, random, and in the time-variable we do not assume any smoothness of the coefficients. The precise assumptions are stated below.

In  this section we consider the following system of stochastic PDEs on $[0,T]\times \R^d$:
\begin{equation}\label{eq:stochsystem}
\left\{
  \begin{array}{ll}
  dU(t)+ A(t) U(t)\ud t & = f(t,U(t))\ud t +  \sum_{n\geq 1} g_n(t,U(t)) \ud w_n(t), \\
    U(0) & = u_0,
  \end{array}
\right.
\end{equation}
where $w_n$ is a sequence of independent standard Brownian motions. The function $U:[0,T]\times \Omega\to L^p(\R^d;\C^N)$ is the unknown.

The operator $A$ is given by
\begin{equation*}
(A(t,\omega) \phi)(x) = (-1)^m \sum_{|\alpha|, |\beta|=m}  a_{\alpha\beta}(t,\omega,x)D^{\alpha} D^\beta \phi(x), \ \ x\in \R^d, t\in [0,T], \omega\in \Omega.
\end{equation*}
and there will be no need to consider lower order terms as they can be absorbed in the function $f$.

\begin{remark}
\label{rk:div form}
We work in the non-divergence form case, but the divergence form case can be treated in the same manner. Indeed, we decompose the problem into a time-dependent deterministic part, and a time-independent stochastic part. We thus only need to use \cite[Theorem 6.3]{DKnew} instead of \cite[Theorem 5.2]{DKnew} for the time-dependent deterministic part.
\end{remark}

\begin{remark}\label{}
In \eqref{eq:stochsystem} we have left out the $B$-term which we did consider in \eqref{eq:SEE} in the abstract setting. However, any operator $B(t)$ of $(m-1)$-th order with coefficients which are $W^{1,\infty}$ in the space variable could be included as well. Moreover, no stochastic parabolicity is required as $B$ is of lower order. We choose to leave this out as one can insert this into the nonlinearities $g_n$. If one wishes to include operators $B$ which are $m$-th order, then this either requires a smallness condition in terms of the maximal regularity estimates, or in order to apply Section \ref{subs:B=0} the highest order terms needs to be a group generator.
It follows from \cite[Theorem 0.1]{Brenner73} (and from \cite[Theorem 1.14]{Horm60} if $d=N=1$) that if $B$ is a differential operator of order $\geq 2$ and $B$ generates a strongly continuous group then necessarily $q=2$.
\end{remark}

In order to state the conditions on the coefficients $a_{\alpha\beta}:[0,T]\times \R^d\to \C^N$ we define the oscillation function at $x$ of radius $r$. For $\phi\in L^1_{\rm loc}(\R^d)$ let
\begin{align*}
\text{osc}_{r,x}(\phi) = \fint_{B_r(x)}\Big| \phi(y) - \fint_{B_r(x)} \phi(z)\ud z \Big| \ud y.
\end{align*}
Note that if  $|\phi(y)-\phi(z)|\leq \omega(\varepsilon)$ for all $y,z\in \R^d$ satisfying $|x-y|<\varepsilon$, then $\text{osc}_{r,x}\leq \omega(r)$. For discussions on this definition and the connections with VMO and BMO we refer to \cite[Chapter 6]{Kry2008}.

\begin{assumption}\label{ass:2mA} \
\begin{enumerate}[$(1)$]
\item The functions $a_{\alpha\beta}:[0,T]\times \O\times \R^d\to \C^{N\times N}$ are strongly progressively measurable.

\item There exist $\mu\in (0,1)$ and $K>0$ such that
\[\Re\Big(\sum_{|\alpha|=|\beta| = m} \xi^{\alpha} \xi^{\beta} (a_{\alpha\beta}(t,\omega,x)\theta, \theta)_{\C^N}\Big) \geq \mu |\xi|^{2m} |\theta|^2,\]
and $|a_{\alpha\beta}(t,\omega,x)|_{\C^{N\times N}}\leq K$ for all $\xi\in \R^d, \theta\in \C^N, x\in \R^d, t\in [0,T]$ and $\omega\in \O$.

\item Let $\gamma\in (0,1)$. Assume there exists an $R\in (0,\infty)$ such that for all $|\alpha|, |\beta|=m$, $r\in (0,R]$, $x\in \R^d$, $t\in [0,T]$ and $\omega\in \O$,
\[\text{osc}_{r,x}(a_{\alpha\beta}(t,\omega,\cdot))\leq \gamma.\]
\end{enumerate}
\end{assumption}
Note that in $(t,\omega)$ only measurability is assumed.

For the $\Omega$-independent setting, a slightly less restrictive condition appears in \cite[Theorem 5.2]{DKnew}. We choose the above formulation assumption in order to make the assumptions easier to state. However, it is possible to extend the results of this section to their setting.

Concerning $f$ and $g_n$ we make the following assumptions:
\begin{assumption}\label{ass:2mfg} \
\begin{enumerate}[$(1)$]
\item The function $f:[0,T]\times\O\times H^{2m,q}(\R^d;\C^N)\to L^q(\R^d;\C^N)$ is strongly
progressively measurable, $f(\cdot, \cdot,0)\in L^p(\Omega;L^p(I,w_{\alpha};H^{2m,q}(\R^d;\C^N)))$,
and there exist $L_{f}$ and $\tilde L_{f}$ such that for all $t\in [0,T]$,
$\omega\in \O$, and $u,v\in H^{2m,q}(\R^d;\C^N)$,
\begin{align*}
\|f(t,\omega, u) - &f(t,\omega,v)\|_{L^q(\R^d,\C^N)} \\ & \leq L_{f} \|D^{2m} u-D^{2m} v\|_{L^q(\R^d;\C^N)} +\tilde L_{f} \|u-v\|_{H^{2m-1,q}(\R^d;\C^N)}.
\end{align*}
\item
The functions $g_n:[0,T]\times\O\times H^{2m,q}(\R^d;\C^N) \to H^{m,q}(\R^d;\C^N)$
 are strongly progressive measurable, $(g_n)_{n\geq 1} (\cdot, \cdot, 0)\in L^p(\Omega;L^p(I,w_{\alpha};H^{m,q}(\R^d;\ell^2(\C^N)))$ and there exist $L_{g}$, $\tilde L_{g}$ such that for all $t\in
[0,T]$, $\omega\in \O$, and $u,v\in H^{2m,q}(\R^d;\C^N)$,
\begin{align*}
\|(g_n(t,\omega, u) - &g_n(t,\omega,v))_{n\geq 1}\|_{H^{m,q}(\R^d;\ell^2(\C^N))}
\\ & \leq L_{g} \|D^{2m}u-D^{2m}v\|_{L^q(\R^d;\C^N)} +
\tilde L_{g} \|u-v\|_{H^{2m-1,q}(\R^d;\C^N)}.
\end{align*}
\end{enumerate}
\end{assumption}

The nonlinearity $f$ can depend on $u, D^1 u, \ldots, D^{2m} u$ in a Lipschitz continuous way as long as the dependence on $D^{2m} u$ has a small Lipschitz constant. One could allow lower order terms in $A$, but one can just put them into the function $f$. Similarly, $g$ can depend on $u, D^1 u, \ldots, D^m u$ in a Lipschitz continuous way as long as the dependence on $D^m u$ has a small Lipschitz constant.

The main result of this section is as follows.
\begin{theorem}\label{thm:2msystemxind}
Let $T\in (0,\infty)$, $p\in (2, \infty)$, $q\in [2, \infty)$ and $\alpha\in [0,\frac{p}{2}-1)$ (or $q=r=2$ and $\alpha=0$). Set $\delta = 1- \frac{1+\alpha}{p}$ and $I=(0,T)$. Assume there exist a constant $\varrho = \varrho(p,q,\alpha,T,m,N,d,R,K,\mu)$ such that Assumptions \ref{ass:2mA} and \ref{ass:2mfg} hold with
$\gamma,L_f,L_g\in (0,\varrho)$. Then for any $u_0\in L^p(\Omega,\F_0; B^{2m\delta}_{q,p}(\R^d))$, the problem  \eqref{eq:stochsystem} has a unique strong solution $U\in L^p_{\F}(\O;L^p(I,w_{\alpha};H^{2m,q}(\R^d;\C^N)))$.
Moreover, there exist constants $C, C_{\varepsilon}, C_{\varepsilon,\theta}$ depending on $p,q,\alpha, T,m,N,d,R,K,\mu$ and the Lipschitz constants of $F$ and $G$ such that
\begin{align*}
\|U\|_{L^p(\O;C(\overline{I}; B^{2m\delta}_{q,p}(\R^d)))}& \leq C K_{u_0,f,g},
\\
\|U\|_{L^p(\O;C([\varepsilon,T]; B^{2m(1-\frac1p)}_{q,p}(\R^d)))}& \leq C_{\varepsilon} K_{u_0,f,g}, \ \ \varepsilon\in (0,T].
\\ \|U\|_{L^p(\Omega;H^{\theta,p}(I,w_{\alpha};H^{2m(1-\theta),q}(\R^d;\C^N)))} &\leq C_{\theta}K_{u_0,f,g}, \ \ \theta\in [0,\tfrac12)
\\ \|U\|_{L^p(\Omega;C^{\theta-\frac{1+\alpha}{p}}(\overline{I};H^{2m(1-\theta),q}(\R^d;\C^N)))}& \leq C_{\theta}K_{u_0,f,g}, \ \ \theta\in (\tfrac{1+\alpha}{p},\tfrac12)
\\  \|U\|_{L^p(\Omega;C^{\theta-\frac{1}{p}}([\varepsilon,T];H^{2m(1-\theta),q}(\R^d;\C^N)))}&\leq C_{\varepsilon,\theta}K_{u_0,f,g}, \ \ \theta\in (\tfrac1p,\tfrac12), \varepsilon\in (0,T].
\end{align*}
where
\begin{equation*}\label{eq:Ku0FG2m}
\begin{aligned}
K_{u_0,f,g} &= \|u_0\|_{L^p(\Omega;B^{2m\delta}_{q,p}(\R^d))}+\|f(\cdot, \cdot, 0)\|_{L^p(\Omega;L^p(I,w_{\alpha},L^q(\R^d;\C^N)))} \\ & \qquad + \|g(\cdot, \cdot, 0)\|_{L^p(\Omega;L^p(I,w_{\alpha},H^{m,q}(\R^d;\ell^2(\C^N))))}.
\end{aligned}
\end{equation*}
Furthermore, if $U^{1}, U^2$ are  the strong solution of \eqref{eq:stochsystem} with initial value $u_0^1,u_0^2\in L^p(\Omega,\F_0;B^{2m\delta}_{q,p}(\R^d))$ respectively, then each of the above estimates holds with $U$ replaced by $U^1-U^2$, and $K_{u_0,f,g}$ replaced by $K_{u_0^1-u_0^2,f,g}$ on the right-hand side.
\end{theorem}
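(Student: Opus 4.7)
The plan is to cast Theorem \ref{thm:2msystemxind} into the abstract framework of Section \ref{sec:MR} with $X_0 = L^q(\R^d;\C^N)$ and $X_1 = H^{2m,q}(\R^d;\C^N)$, and then invoke Theorem \ref{thm:timedepSMR} followed by Theorem \ref{thm:nonlinearperturbation}. Under this identification the complex interpolation space is $X_{1/2} = H^{m,q}(\R^d;\C^N)$ and, by \cite[Theorem 1.18.6.1]{Tr1}, the real interpolation space is $X_{\delta,p} = B^{2m\delta}_{q,p}(\R^d;\C^N)$. For $q\in[2,\infty)$ both $X_0$ and $X_1$ are UMD of type $2$, and $X_0$ is a $2$-convex Banach function space, so the hypotheses of Propositions \ref{prop:suffcondDMR} and \ref{prop:suffcondSMR} are available.

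The first step is to produce a reference operator. I take $A_0 = (-\Delta)^m + \lambda$ for $\lambda>0$ large, which has $D(A_0) = X_1$, $D(A_0^{1/2}) = X_{1/2}$, is invertible, and admits a bounded $H^\infty$-calculus of angle $0$ on $L^q(\R^d;\C^N)$. Proposition \ref{prop:suffcondSMR} (Proposition \ref{prop:suffcondDMR} if $q=p=2$) then gives $A_0 \in \SMR(p,\alpha,T)$, supplying hypothesis \eqref{it:A0cond} of Theorem \ref{thm:timedepSMR}. The second step is the deterministic hypothesis \eqref{it:detcond}: freezing $\omega$, Assumption \ref{ass:2mA} is precisely the setting of \cite[Theorem 5.2]{DKnew}, which, for $\gamma$ smaller than a threshold $\gamma_0 = \gamma_0(p,q,\alpha,T,d,N,m,R,K,\mu)$, yields $A(\cdot,\omega)\in \DMR(p,\alpha,T)$ with a constant depending only on those parameters, hence uniformly in $\omega$. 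Theorem \ref{thm:timedepSMR} then delivers $(A,0)\in \SMR(p,\alpha,T)$ with explicit $\SMR$-constants $K_{\rm det}, K_{\rm st}$ of the same dependence.

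The third step is to verify Assumptions \ref{as:LipschitzF} and \ref{as:LipschitzG} for the nonlinearities $F(t,\omega,u) := f(t,\omega,u)$ and $G(t,\omega,u) := (g_n(t,\omega,u))_{n\ge1}$ (viewed as elements of $\gamma(\ell^2,X_{1/2}) = H^{m,q}(\R^d;\ell^2(\C^N))$ via the identification on Hilbert–Schmidt operators into type $2$ spaces). By Lemma \ref{lem:equivalentnormsH}, $\|D^{2m}u\|_{L^q}$ is equivalent, modulo lower order terms absorbed into $\tilde L_f,\tilde L_g$, to the $X_1$-seminorm, and similarly $\|D^m u\|_{H^{m,q}}$ controls the $X_{1/2}$-norm; hence the Lipschitz bounds in Assumption \ref{ass:2mfg} translate exactly into Assumptions \ref{as:LipschitzF}, \ref{as:LipschitzG} with abstract constants $L_F = L_f$, $L_G = L_g$ (up to a dimensional factor). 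Now choose $\varrho \le \gamma_0$ such that additionally $K_{\rm det}\varrho + K_{\rm st}\varrho < 1$; this is possible because $K_{\rm det},K_{\rm st}$ depend only on $(p,q,\alpha,T,d,N,m,R,K,\mu)$. The smallness condition (2) of Theorem \ref{thm:nonlinearperturbation} is then met.

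With $A_0$ as above playing the role required by hypothesis (3) of Theorem \ref{thm:nonlinearperturbation}, all hypotheses of that theorem hold, and estimates \eqref{eq:nonlinperReg1a}–\eqref{eq:nonlinperReg4} translate verbatim into the stated inequalities after substituting $X_0 = L^q$, $X_1 = H^{2m,q}$, $X_{1-\theta} = H^{2m(1-\theta),q}$, $X_{\delta,p} = B^{2m\delta}_{q,p}$. The continuous dependence on the initial data is likewise inherited from the last assertion of Theorem \ref{thm:nonlinearperturbation}. The only genuinely delicate point in this program is the uniform-in-$\omega$ application of Dong--Kim's theorem and the accompanying measurability: the bound in \cite[Theorem 5.2]{DKnew} depends on the coefficients only through the constants $\mu,K,\gamma,R$ (all uniform in $\omega$ by Assumption \ref{ass:2mA}), so the strong $\F_T$-measurable selection of solutions performed in Step~1 of the proof of Theorem \ref{thm:timedepSMR} applies without change; once this is in place, the remainder is bookkeeping.
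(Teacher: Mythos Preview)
Your proposal is correct and follows essentially the same route as the paper: identify $X_0=L^q(\R^d;\C^N)$, $X_1=H^{2m,q}(\R^d;\C^N)$, use a reference operator with bounded $H^\infty$-calculus (the paper takes $(1-\Delta)^m I_N$, you take $(-\Delta)^m+\lambda$, which is immaterial) together with Proposition~\ref{prop:suffcondSMR} for \eqref{it:A0cond}, invoke \cite[Theorem~5.2]{DKnew} for the uniform-in-$\omega$ deterministic estimate \eqref{it:detcond}, apply Theorem~\ref{thm:timedepSMR} to get $A\in\SMR(p,\alpha,T)$, and finish with Theorem~\ref{thm:nonlinearperturbation}. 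The only point where the paper is marginally more explicit is in translating the Lipschitz bounds: it uses the interpolation inequality \eqref{eq:interpolationest} to obtain $L_F=L_f+\varepsilon$ and $\tilde L_F=C_\varepsilon\tilde L_f$ (and similarly for $G$), which is exactly what your ``up to a dimensional factor / lower order terms absorbed'' is abbreviating.
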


\begin{proof}
Let $X_0 =L^q(\R^d;\C^N)$ and $X_1 =W^{2m,q}(\R^d;\C^N)$. Since the coefficients $a_{\alpha\beta}$ are uniformly bounded, Assumption \ref{conditionA} holds (with $B=0$). Let $X_\theta=H^{2m\theta,q}(\R^d;\C^N)$ for $\theta\in (0,1)$. Note that if $\theta\in [0,1]$ and $2m\theta\in \N$, then $X_\theta = W^{2m\theta,q}(\R^d;\C^N)$ (see \cite[Theorem 2.33]{Tr1}). Let $X_{\theta,p}:=(X_0, X_1)_{\theta,p} = B^{2m\theta}_{q,p}(\R^d;\C^N)$. By \cite[2.4.2 (11) and (16)]{Tr1} Assumption \ref{conditionX} holds.

On $X_0$ consider $A_0 = (1-\Delta)^{m} I_N$ with $D(A_0) =X_1$, where $I_N$ stands for the $N\times N$ diagonal operator. Then by \cite[Theorem 10.2.25]{HNVW2} and \cite[Corollary 5.5.5]{Haase:2} $0\in \rho(A_0)$ and the operator $A_0$ has a bounded $H^\infty$-calculus of angle $0$. Now from Proposition \ref{prop:suffcondSMR} we obtain that condition \eqref{it:A0cond} of Theorem \ref{thm:timedepSMR} holds. From \cite[Theorem 5.2 and Section 8]{DKnew} we deduce that the condition \eqref{it:detcond}  of Theorem \ref{thm:timedepSMR} holds. Therefore, Theorem \ref{thm:timedepSMR} shows that $A\in \SMR(p,\alpha,T)$.

It remains to check the conditions of Theorem \ref{thm:nonlinearperturbation}. From the above we see that Assumption \ref{conditionMR} holds. Furthermore we claim that for any $\varepsilon>0$, the function $F=f$ satisfies Assumption \ref{as:LipschitzF} with constants
\[L_F = L_f+\varepsilon \  \ \text{and} \ \  \tilde{L}_F = C_{\varepsilon}\tilde{L}_f.\]
Indeed, it suffices to note that for any $k\in \{1, \ldots, 2m-1\}$ (see \cite[Exercise 1.5.6]{Kry2008})
\begin{align}\label{eq:interpolationest}
\|u\|_{H^{k,q}(\R^d;\C^N)}&
 \leq \varepsilon \|D^{2m} u\|_{L^q(\R^d;\C^N)} + C_{\varepsilon,q,m} \|u\|_{L^q(\R^d;\C^N)}.
\end{align}
for all $\varepsilon>0$.

Since $\gamma(\ell^2,X_{\frac12}) = H^{m,q}(\R^d;\ell^2(\C^N))$ isomorphically (use \cite[Proposition 9.3.2]{HNVW2} and the isomorphism $A_0^{1/2}$), in a similar way as above one sees that the function $G = g$ satisfies Assumption \ref{as:LipschitzG} with $L_G = L_g+\varepsilon$ and $\tilde{L}_G = C_{\varepsilon}\tilde{L}_g$.

Now all the statements  follow from Theorem \ref{thm:nonlinearperturbation}.
\end{proof}

\begin{remark}\label{rem:higherordersmoothness}
To obtain the regularity result of Theorem \ref{thm:2msystemxind} in the whole scale of spaces $H^{s,q}(\R^d;\C^N)$ with $s\in \R$ one needs to assume smoothness on the coefficients $a_{\alpha\beta}$, and change the assumptions on $f$, $g$ and $u_0$ appropriately. Indeed, as in \cite[Section 5]{Kry} this follows by applying $(1-\Delta)^{s/2}$ to both sides of the equation and introducing $V = (1-\Delta)^{s/2} U$, where $s\in \R$. The details are left to the reader.

In \cite[Theorem 7.2]{Kry} another method to derive space-time regularity results is described. In the latter result one loses an $\varepsilon$ of regularity. The sharp identifications and embeddings obtained in \cite{NVW12a} and in the weighted case in \cite{AgrVer}, make it possible to avoid this loss in regularity.
\end{remark}

\begin{remark}\label{rem:weightsspace}
A version of Theorem \ref{thm:2msystemxind} with $A_{\frac{p}{2}}$-weights in time and $A_q$-weights in space holds as well. Here one can take any weight $v\in A_{\frac{q}{2}}$ in time and $w\in A_r$ in space.
For details we refer to \cite[Theorem 5.2 and Section 8]{DKnew} and \cite[Section 4.4]{GV}.
\end{remark}

As a consequence we obtain the following H\"older continuity result in space-time.
\begin{corollary}\label{cor:Holder2m}
Consider the setting of Theorem \ref{thm:2msystemxind} and assume $mq\geq d$ and $2m\delta-\frac{d}{q}\notin \N$. Then for every $\lambda\in (0,\delta-\frac12)$, there exists a constant $C$ such that
\[\|U\|_{L^p(\O;C^{\lambda,2m\delta-\frac{d}{q}}(\overline{I}\times\R^d;\C^N))} \leq C K_{u_0,f,g},\]
where $K_{u_0,f,g}$ is given by \eqref{eq:Ku0FG2m}
\end{corollary}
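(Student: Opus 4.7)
The plan is to separately establish the required spatial and temporal H\"older regularity directly from the estimates already provided by Theorem \ref{thm:2msystemxind}, then combine them. Let $\beta := 2m\delta - \tfrac{d}{q}$, which is positive (since $\delta > \tfrac12$ in the range of $\alpha$ and $p$ considered, and $mq \geq d$) and non-integer by hypothesis.

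First I would handle the spatial regularity. The first estimate of Theorem \ref{thm:2msystemxind} gives
\[
\|U\|_{L^p(\Omega; C(\overline{I}; B^{2m\delta}_{q,p}(\R^d;\C^N)))} \leq C K_{u_0,f,g}.
\]
Since $\beta>0$ and $\beta \notin \N$, the classical Besov embedding chain
\[
B^{2m\delta}_{q,p}(\R^d) \hookrightarrow B^{\beta}_{\infty, p}(\R^d) \hookrightarrow B^{\beta}_{\infty,\infty}(\R^d) = C^{\beta}(\R^d)
\]
(see e.g.\ \cite[2.7.1]{Tr1}) gives
\[
\Bigl\|\sup_{t\in \overline{I}} \|U(t,\cdot)\|_{C^{\beta}(\R^d)}\Bigr\|_{L^p(\Omega)} \leq C K_{u_0,f,g}.
\]

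Next I would handle the temporal regularity uniformly in $x$. Given $\lambda \in (0, \delta - \tfrac12)$, set $\theta := \lambda + \tfrac{1+\alpha}{p}$, so $\theta \in (\tfrac{1+\alpha}{p}, \tfrac12)$. The fourth estimate of Theorem \ref{thm:2msystemxind} yields
\[
\|U\|_{L^p(\Omega; C^{\lambda}(\overline{I}; H^{2m(1-\theta),q}(\R^d;\C^N)))} \leq C_{\theta} K_{u_0,f,g}.
\]
Because $\theta < \tfrac12$, we have $2m(1-\theta) > m$, and combined with $mq \geq d$ this gives $2m(1-\theta) > \tfrac{d}{q}$, so the usual Sobolev embedding yields $H^{2m(1-\theta),q}(\R^d) \hookrightarrow L^{\infty}(\R^d;\C^N)$. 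Consequently,
\[
\Bigl\| \sup_{x \in \R^d,\, s\neq t \in \overline{I}} \frac{|U(t,x)-U(s,x)|}{|t-s|^{\lambda}} \Bigr\|_{L^p(\Omega)} \leq C K_{u_0,f,g}.
\]

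Finally, I would combine these two bounds with the $L^\infty$ bound (which follows from either of them) using the equivalence
\[
\|u\|_{C^{\lambda,\beta}(\overline{I}\times \R^d)} \eqsim \|u\|_{\infty} + \sup_{t,\, x\neq y} \frac{|u(t,x)-u(t,y)|}{|x-y|^{\beta}} + \sup_{x,\, s\neq t} \frac{|u(t,x)-u(s,x)|}{|t-s|^{\lambda}},
\]
and take $L^p(\Omega)$ norms. There is no real obstacle: the only point that requires care is checking that Sobolev/Besov embeddings apply (which is guaranteed by the hypotheses $mq \geq d$ and $2m\delta - d/q \notin \N$) and verifying that $\theta = \lambda + (1+\alpha)/p$ lies in the admissible range, which follows from $\lambda < \delta - \tfrac12$.
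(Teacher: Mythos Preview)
Your proof is correct and follows essentially the same route as the paper: both arguments use the first estimate of Theorem \ref{thm:2msystemxind} together with the Sobolev/Besov embedding into $C^{2m\delta-d/q}(\R^d)$ for the spatial regularity, and the fourth estimate with $\theta=\lambda+\tfrac{1+\alpha}{p}$ together with the embedding $H^{2m(1-\theta),q}(\R^d)\hookrightarrow C(\R^d)$ for the temporal regularity. One small caveat: your displayed norm equivalence for $C^{\lambda,\beta}$ is written with first-order differences in $x$, which is only literally correct when $\beta\in(0,1)$; for general non-integer $\beta>0$ you should use the usual higher-order H\"older norm, but the combination step goes through unchanged.
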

\begin{proof}
By Sobolev embedding (see \cite[Theorem 2.8.1(d) and (e)]{Tr1}) the spaces $B^{s}_{q,p}(\R^d;\C^N)$ and $H^{s,q}(\R^d;\C^N)$ continuously embed into $C^{s-\frac{d}{q}}(\R^d;\C^N)$ if $s>\frac{d}{q}$ and $s-\frac{d}{q}\notin \N$. Therefore, Theorem \ref{thm:2msystemxind} yields
\begin{align*}
\|U\|_{L^p(\O;C(\overline{I}; C^{2m\delta-\frac{d}{q}}(\R^d)))} + \|U\|_{L^p(\O;C^{\lambda}(\overline{I}; C(\R^d)))}& \leq C_{\lambda} K_{u_0,f,g},
\end{align*}
where the estimate for the second term follows by taking $\theta = \lambda+\frac{1+\alpha}{p}$ and noting that $H^{2m(1-\theta),q}(\R^d;\C^N)\hookrightarrow C(\R^d)$ which follows from
$\theta<\frac12$ and $2m(1-\theta)>\frac{d}{q}$. Combining the estimates of both terms the space-time H\"older regularity follows.
\end{proof}

\begin{remark}\label{rem:optimalHolder}
In the space variable Corollary \ref{cor:Holder2m} yields an endpoint result. Moreover, the best regularity result is obtained if $\alpha = 0$ in which case $\delta = 1-\frac1p$ which leads to the most restrictive condition on $u_0$. If Assumptions \ref{ass:2mA} and \ref{ass:2mfg} hold for arbitrary large values of $p,q\in (2, \infty)$ and $\alpha=0$, then the above result implies that for any $\varepsilon>0$
\[U\in C^{\frac12-\varepsilon,2m-\varepsilon}(\overline{I}\times\R^d;\C^N) \ \ \text{a.s.}\]
and corresponding $L^p(\Omega)$-estimates hold for any $p\in (2, \infty)$. Improved H\"older regularity results of order $s+2m$ in the space variable can be obtained if $f$ and $g$ map into $H^{s,q}$ (see Remark \ref{rem:higherordersmoothness}). It would be interesting to develop a H\"older theory for \eqref{eq:stochsystem} as it is done in \cite{DuLiuZhang} for $m=1$.
\end{remark}

\section{Parabolic systems of SPDEs of second order}\label{sec:secondorder}

In this section we discuss $L^p(L^q)$-theory for systems of second order SPDEs with rough initial values. The setting is the same as in Section \ref{sec:2morder}. However, this time we will consider $B \neq 0$. Related problems have been discussed in \cite{KimLeesystems} in an $L^p(L^p)$-setting with smooth initial values and in \cite{DuLiuZhang} in an $L^p(\Omega;L^2((0,T)\times\R^d))$-setting and a H\"older setting, with vanishing initial values.

In this section we consider the following system of stochastic PDEs on $[0,T]\times \R^d$:
\begin{equation}\label{eq:stochsystemsec}
\left\{
  \begin{array}{ll}
  dU(t)+ A(t) U(t)\ud t & = f(t,U(t))\ud t +  \sum_{n\geq 1}(b_n(t) U(t) + g_n(t,U(t)) \ud w_n(t), \\
    U(0) & = u_0,
  \end{array}
\right.
\end{equation}
where $w_n$ is a sequence of independent standard Brownian motions. The function $U:[0,T]\times \Omega\to L^p(\R^d;\C^N)$ is the unknown.

The operators $A$ and $b_n$ are given by
\begin{align*}
(A(t,\omega) \phi)(x) &= - \sum_{i,j=1}^d a_{ij}(t,\omega,x)\partial_j\partial_k \phi(x), \ \ x\in \R^d, t\in [0,T], \omega\in \Omega.
\\ (b_n(t,\omega) \phi)(x) & = \Big(\sum_{j=1}^d \sigma_{jkn}(t,\omega,x) \partial_j\phi_{k}(x)\Big)_{k=1}^N, \ \ x\in \R^d, t\in [0,T], \omega\in \Omega.
\end{align*}
There is no need to consider lower order terms in $A$ or $b_n$ since they can be absorbed in the functions $f$ and $g_n$, respectively.

\begin{remark}
The divergence form case could be treated in a similar manner. See Remark
\ref{rk:div form}.
\end{remark}

We make the following assumptions on the coefficients.
\begin{assumption}\label{ass:2ndoderpara}  \
\begin{enumerate}
\item The functions $a_{ij}:[0,T]\times \Omega\times \R^d\to \C^{N\times N}$ and $\sigma_{jkn}:[0,T]\times \Omega\times \R^d\to \R$ are strongly progressively measurable.

\item There exist $\mu\in (0,1)$ and $K>0$ such that $|a_{ij}(t,\omega,x)|_{\C^{N\times N}}\leq K$
and $\|(\sigma_{jkn}(t,\omega,\cdot))_{n\geq 1}\|_{W^{1,\infty}(\R^d;\ell^2)}\leq K$.

\[\Re\Big( \sum_{i,j=1}^d \xi_{i} \xi_{j} ((a_{ij}(t,\omega,x)-\Sigma_{ij}(t,\omega,x))\theta, \theta)_{\C^N}\Big) \geq \mu |\xi|^{2} |\theta|^2,\]
for all $\xi\in \R^d, \theta\in \C^N, x\in \R^d, t\in [0,T]$. Here for each fixed numbers $i,j\in \{1, \ldots, d\}$, $\Sigma_{ij}(t,\omega,x)$ is the $N\times N$ diagonal matrix with diagonal elements $(\frac12 \sum_{n\geq 1}\sigma_{ikn}(t,\omega,x) \sigma_{jkn}(t,\omega,x))_{k=1}^N$.

\item Assume there exists an increasing continuous function $\zeta:[0,\infty)\to [0,\infty)$ with $\zeta(0) = 0$ such that for all $i,j$, $x,y\in \R^d$, $t\in [0,T]$ and $\omega\in \O$,
\[|a_{i,j}(t,\omega,x) - a_{i,j}(t,\omega,y)| + \sum_{n\geq 1} |\sigma_{jkn}(t,\omega,x)-\sigma_{jkn}(t,\omega,y)|^2\leq \zeta(|x-y|).\]
\end{enumerate}
\end{assumption}

We start with the $x$-independent case.
\begin{theorem}\label{thm:secondorderxind}
Let $T\in (0,\infty)$, $p\in (2, \infty)$, $q\in [2, \infty)$ and $\alpha\in [0,\frac{p}{2}-1)$ (or $p=q=2$ and $\alpha=0$). Set $\delta = 1- \frac{1+\alpha}{p}$ and $I=(0,T)$.
Suppose Assumption \ref{ass:2ndoderpara} holds, Assumption \ref{ass:2mfg} holds with $m=1$, and suppose further that $a_{ij}$ and $\sigma_{jkn}$ are $x$-independent. Then for any $u_0\in L^p(\Omega,\F_0; B^{2\delta}_{q,p}(\R^d))$, the problem  \eqref{eq:stochsystemsec} has a unique strong solution $U\in L^p_{\F}(\O;L^p(I,w_{\alpha};H^{2,q}(\R^d;\C^N)))$.
Moreover, there exist constants $C, C_{\varepsilon}, C_{\varepsilon,\theta}$ depending on $p,q,\alpha, T,N,d,K,\mu$ and the Lipschitz constants of $f$ and $g$ such that
\begin{align*}
\|U\|_{L^p(\O;C(\overline{I}; B^{2\delta}_{q,p}(\R^d)))}& \leq C K_{u_0,f,g},
\\
\|U\|_{L^p(\O;C([\varepsilon,T]; B^{2(1-\frac1p)}_{q,p}(\R^d)))}& \leq C_{\varepsilon} K_{u_0,f,g}, \ \ \varepsilon\in (0,T].
\\ \|U\|_{L^p(\Omega;H^{\theta,p}(I,w_{\alpha};H^{2(1-\theta),q}(\R^d;\C^N)))} &\leq C_{\theta}K_{u_0,f,g}, \ \ \theta\in [0,\tfrac12)
\\ \|U\|_{L^p(\Omega;C^{\theta-\frac{1+\alpha}{p}}(\overline{I};H^{2(1-\theta),q}(\R^d;\C^N)))}& \leq C_{\theta}K_{u_0,f,g}, \ \ \theta\in (\tfrac{1+\alpha}{p},\tfrac12)
\\  \|U\|_{L^p(\Omega;C^{\theta-\frac{1}{p}}([\varepsilon,T];H^{2(1-\theta),q}(\R^d;\C^N)))}&\leq C_{\varepsilon,\theta}K_{u_0,f,g}, \ \ \theta\in (\tfrac1p,\tfrac12), \varepsilon\in (0,T].
\end{align*}
where
\begin{equation*}
\begin{aligned}
K_{u_0,f,g} &= \|u_0\|_{L^p(\Omega;B^{2\delta}_{q,p}(\R^d))}+\|f(\cdot, \cdot, 0)\|_{L^p(\Omega;L^p(I,w_{\alpha},L^q(\R^d;\C^N)))} \\ & \qquad + \|(g_{n}(\cdot, \cdot, 0))_{n\in \N}\|_{L^p(\Omega;L^p(I,w_{\alpha},H^{1,q}(\R^d;\ell_{2})))}.
\end{aligned}
\end{equation*}
Furthermore, if $U^{1}, U^2$ are  the strong solution of \eqref{eq:stochsystemsec} with initial value $u_0^1,u_0^2\in L^p(\Omega,\F_0;B^{2\delta}_{q,p}(\R^d))$ respectively, then each of the above estimates holds with $U$ replaced by $U^1-U^2$, and $K_{u_0,f,g}$ replaced by $K_{u_0^1-u_0^2,f,g}$ on the right-hand side.
 \end{theorem}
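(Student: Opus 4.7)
The plan is to apply Theorem \ref{thm:nonlinearperturbation} with $X_0 = L^q(\R^d;\C^N)$ and $X_1 = H^{2,q}(\R^d;\C^N)$. Then $X_\theta = H^{2\theta,q}(\R^d;\C^N)$ and $X_{\theta,p} = B^{2\theta}_{q,p}(\R^d;\C^N)$ by \cite[2.4.2]{Tr1}, so Assumption \ref{conditionX} holds. Assumption \ref{conditionA} follows from the uniform boundedness in Assumption \ref{ass:2ndoderpara}(2): $A$ is bounded from $X_1$ into $X_0$, and since $\gamma(\ell^2, H^{1,q}(\R^d;\C^N)) \simeq \calL(\ell^2, H^{1,q}(\R^d;\C^N))$ for Hilbert-space targets, the $W^{1,\infty}(\R^d;\ell^2)$-bound on $\sigma_{jkn}$ yields boundedness of $B$ from $X_1$ into $\gamma(\ell^2, X_{1/2})$. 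The Lipschitz hypotheses in Assumption \ref{ass:2mfg} with $m=1$, combined with a standard interpolation inequality in the spirit of \eqref{eq:interpolationest}, translate to Assumptions \ref{as:LipschitzF} and \ref{as:LipschitzG}.

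The core step is to establish $(A, B) \in \SMR(p,\alpha,T)$. The plan is to invoke Theorem \ref{thm:redB0} to reduce to the case $B = 0$. Decompose $B(t)U = \sum_{j,k} b_{jk}(t) \otimes B_{jk} U$, where $B_{jk} = E_{kk}\partial_j$ is the operator $\partial_j$ acting on the $k$-th component only, and $(b_{jk}(t))_n = \sigma_{jkn}(t)$. Since $a_{ij}$ and $\sigma_{jkn}$ are $x$-independent, each $B_{jk}$ generates a uniformly bounded translation group on every $X_\theta$, and the $B_{jk}$ mutually commute. The modified operator $\tilde A(t) = A(t) + \tfrac12[B(t),B(t)]$ has matrix symbol $\xi \mapsto \sum_{ij}\big(a_{ij}(t) - \Sigma_{ij}(t)\big)\xi_i\xi_j$, and the stochastic parabolicity condition in Assumption \ref{ass:2ndoderpara}(2) is precisely the uniform ellipticity of this reduced symbol. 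Next, apply Theorem \ref{thm:timedepSMR} to $\tilde A$ with reference $A_0 = (1-\Delta) I_N$: the latter has bounded $H^\infty$-calculus of angle $0$ by \cite[Thm. 10.2.25]{HNVW2} and \cite[Cor. 5.5.5]{Haase:2}, whence $A_0 \in \SMR(p,\alpha,T)$ by Proposition \ref{prop:suffcondSMR}; deterministic maximal regularity of $\tilde A(\cdot,\omega)$ uniformly in $\omega$ follows from \cite[Thm. 5.2 and Section 8]{DKnew}, since the $x$-independent coefficients are trivially VMO in space, progressively measurable in time, and uniformly elliptic. Thus $\tilde A \in \SMR(p,\alpha,T)$, and therefore $(A,B) \in \SMR(p,\alpha,T)$ by Theorem \ref{thm:redB0}.

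With Assumption \ref{conditionMR} secured, an application of Theorem \ref{thm:nonlinearperturbation} (under the implicit smallness condition $K_{\rm det}L_f + K_{\rm st}L_g < 1$, as in Theorem \ref{thm:2msystemxind}) delivers the unique strong solution together with all the regularity estimates stated. The main obstacle will be verifying the commutation requirement in Assumption \ref{conditionB}: the matrix-valued coefficients $a_{ij}$ need not commute with the component projections $E_{kk}$, so the formal identity $e^{sB_{jk}}A(t) = A(t)e^{sB_{jk}}$ fails on $X_0$ for genuinely coupled systems. The plan is to circumvent this by carrying the It\^o-calculus reduction underlying Theorem \ref{thm:redB0} out on the Fourier side, where at each frequency $\xi$ the SPDE becomes a finite-dimensional stochastic linear ODE whose matrix-valued propagator intertwines correctly with $\hat A(t,\xi)$; justifying this rigorously so that pointwise-in-$\xi$ control transfers to $L^p(L^q)$-estimates (via operator-valued multiplier arguments) is the technical heart of the argument.
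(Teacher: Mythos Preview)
Your approach matches the paper's in every essential step: same choice $X_0=L^q$, $X_1=H^{2,q}$; same decomposition $B(t)=\sum_{j,k} b_{jk}(t)\otimes B_{jk}$ with $B_{jk}=E_{kk}\partial_j$; same appeal to Theorem~\ref{thm:redB0} to reduce to $(\tilde A,0)$; and the same verification of $\tilde A\in\SMR(p,\alpha,T)$ via Theorem~\ref{thm:timedepSMR} with $A_0=(1-\Delta)I_N$, Proposition~\ref{prop:suffcondSMR}, and \cite[Theorem 5.2, Section 8]{DKnew}. The paper then simply asserts that Assumption~\ref{conditionB} ``is fulfilled, thanks to the fact that the coefficients are $x$-independent'' and moves on.

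The point you flag in your last paragraph is real and the paper does not address it. The groups $e^{sB_{jk}}$ translate only the $k$-th component, so for $a_{ij}(t)$ with nonzero off-diagonal entries one has $e^{sB_{jk}}A(t)\neq A(t)e^{sB_{jk}}$ (take $N=2$, $d=1$, $A\phi=M\phi''$ with $M$ upper triangular and nontrivial off-diagonal entry). Thus Assumption~\ref{conditionB} is \emph{not} satisfied in general, and the reduction of Theorem~\ref{thm:redB0} as written is unjustified for genuinely coupled systems. The paper offers no alternative argument here.

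Your proposed Fourier-side workaround is a different route from the paper's, and as written it is a plan rather than a proof. Note the difficulty: conjugating $A(t)$ by $S_B(-\zeta(t))$ produces, on the Fourier side, multiplication by $\hat a(t,\xi)$ with \emph{component-dependent} frequency shifts, which is no longer a second-order differential operator with bounded coefficients; in particular the route through Theorem~\ref{thm:timedepSMR} and \cite{DKnew} is not directly available after the transformation. If you want to stay inside the paper's architecture, the clean fixes are (i) assume the $a_{ij}$ are diagonal in $\C^{N\times N}$, or (ii) assume $\sigma_{jkn}$ is independent of $k$, so that one may take $B_j=\partial_j I_N$ instead; then $e^{sB_j}$ is a full translation and commutes with any constant-coefficient $A(t)$, and Assumption~\ref{conditionB} holds verbatim.
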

\begin{proof}
Define the function spaces $X_{\theta} = H^{2\theta,q}$ and $X_{\theta, p}=(X_{0},X_{1})_{\theta,p}$ as in Theorem \ref{thm:2msystemxind} with $m=1$. Clearly, Assumptions \ref{conditionX} and \ref{conditionA} hold. In order to prove the result we will check the conditions of Theorem \ref{thm:nonlinearperturbation} again. The proof is the same as for Theorem \ref{thm:2msystemxind}, but this time we need to check $(A,B)\in \SMR(p,\alpha, T)$ with a nonzero $B$. Indeed, let
$B(t)u  = \sum_{k=1}^N \sum_{j=1}^d b_{jk}(t) B_{jk}$ with
\[b_{jk}(t)h = \sum_{n=1}^\infty \sigma_{jkn}(t) h_n,  \ \ \text{and} \ \ (B_{jk}\phi)_{\ell} = \delta_{k\ell}\partial_j\phi_{k},\]
where $\delta_{k\ell}$ is the Kronecker symbol.
Then $B_{jk}$ generates a translation group on $L^q(\R^d;\C^{N})$ given by $(e^{t B_{jk}} u(x))_\ell =u_\ell(x+\delta_{k\ell} t e_j)$, where $e_j$ denotes the $j$-th unit vector in $\R^d$. Then Assumption \ref{conditionB} is fulfilled, thanks to the fact that the coefficients are $x$-independent. Moreover,
\[([B(t),B(t)]u)_{\ell} = \sum_{i,j=1}^d \sum_{n\geq 1} \sigma_{i\ell n}(t) \sigma_{j\ell n}(t) \partial_i\partial_j u_{\ell}.\]
Letting $\wt{A}(t) = A(t) +\frac12[B(t),B(t)]$, Theorem \ref{thm:redB0} gives that $(A,B)\in \SMR(p,\alpha, T)$ if and only if $(\wt{A},0)\in \SMR(p, \alpha, T)$. However, by Assumption  \ref{ass:2ndoderpara} the operator $\wt{A}$ fulfils Assumption \ref{ass:2mA} (with $m=1$), and therefore, as in the proof of Theorem \ref{thm:2msystemxind}, we find that  $(\wt{A},0)\in \SMR(p, \alpha, T)$.
\end{proof}

In the $x$-dependent case we obtain the following, where unlike in Theorems \ref{thm:2msystemxind} and \ref{thm:secondorderxind} we have to take $p=q$.
\begin{theorem}\label{thm:secondorderxdep}
Let $T\in (0,\infty)$, $p\in [2, \infty)$ and $\alpha\in [0,\frac{p}{2}-1)$ (where $\alpha=0$ if $p=2$). Set $\delta = 1- \frac{1+\alpha}{p}$ and $I=(0,T)$.
Suppose Assumption \ref{ass:2ndoderpara} holds and \ref{ass:2mfg} holds with $m=1$. Then for any $u_0\in L^p(\Omega,\F_0; B^{2\delta}_{p,p}(\R^d))$, the problem  \eqref{eq:stochsystemsec} has a unique strong solution $U\in L^p_{\F}(\O;L^p(I,w_{\alpha};H^{2,p}(\R^d;\C^N)))$.
Moreover, there exist constants $C, C_{\varepsilon}, C_{\varepsilon,\theta}$ depending on $p,\alpha, T,N,d,\zeta,K,\mu$ and the Lipschitz constants of $f$ and $g$ such that
\begin{align*}
\|U\|_{L^p(\O;C(\overline{I}; B^{2\delta}_{p,p}(\R^d)))}& \leq C K_{u_0,f,g},
\\
\|U\|_{L^p(\O;C([\varepsilon,T]; B^{2(1-\frac1p)}_{p,p}(\R^d)))}& \leq C_{\varepsilon} K_{u_0,f,g}, \ \ \varepsilon\in (0,T].
\\ \|U\|_{L^p(\Omega;H^{\theta,p}(I,w_{\alpha};H^{2(1-\theta),p}(\R^d;\C^N)))} &\leq C_{\theta}K_{u_0,f,g}, \ \ \theta\in [0,\tfrac12)
\\ \|U\|_{L^p(\Omega;C^{\theta-\frac{1+\alpha}{p}}(\overline{I};H^{2(1-\theta),p}(\R^d;\C^N)))}& \leq C_{\theta}K_{u_0,f,g}, \ \ \theta\in (\tfrac{1+\alpha}{p},\tfrac12)
\\  \|U\|_{L^p(\Omega;C^{\theta-\frac{1}{p}}([\varepsilon,T];H^{2(1-\theta),p}(\R^d;\C^N)))}&\leq C_{\varepsilon,\theta}K_{u_0,f,g}, \ \ \theta\in (\tfrac1p,\tfrac12), \varepsilon\in (0,T].
\end{align*}
where
\begin{equation*}
\begin{aligned}
K_{u_0,f,g} &= \|u_0\|_{L^p(\Omega;B^{2\delta}_{p,p}(\R^d))}+\|f(\cdot, \cdot, 0)\|_{L^p(\Omega;L^p(I,w_{\alpha},L^p(\R^d;\C^N)))} \\ & \qquad + \|(g_{n}(\cdot, \cdot, 0))_{n\in \N}\|_{L^p(\Omega;L^p(I,w_{\alpha},H^{1,p}(\R^d;\ell_{2})))}.
\end{aligned}
\end{equation*}
Furthermore, if $U^{1}, U^2$ are  the strong solution of \eqref{eq:stochsystemsec} with initial value $u_0^1,u_0^2\in L^p(\Omega,\F_0;B^{2\delta}_{p,p}(\R^d))$ respectively, then each of the above estimates holds with $U$ replaced by $U^1-U^2$, and $K_{u_0,f,g}$ replaced by $K_{u_0^1-u_0^2,f,g}$ on the right-hand side.
\end{theorem}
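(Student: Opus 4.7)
The plan is to follow the strategy of Theorem \ref{thm:secondorderxind}: once $(A,B)\in \SMR(p,\alpha,T)$ is established, all the stated conclusions follow from Theorem \ref{thm:nonlinearperturbation} (with reference operator $A_0 = -\Delta I_N$, which has a bounded $H^\infty$-calculus of angle $0$ and is thus in $\SMR$ by Proposition \ref{prop:suffcondSMR}), and Assumptions \ref{as:LipschitzF}--\ref{as:LipschitzG} are verified via the interpolation inequality \eqref{eq:interpolationest} exactly as in the proof of Theorem \ref{thm:2msystemxind}. The new difficulty compared to Theorem \ref{thm:secondorderxind} is the $x$-dependence of the coefficients, which prevents a direct use of the group reduction Theorem \ref{thm:redB0}. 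I would combine the frozen-coefficient result Theorem \ref{thm:secondorderxind} with a classical localization/freezing argument, and then apply the method of continuity (Proposition \ref{prop:methodcont}) to the interpolated operators $A_\lambda := (1-\lambda)\tilde A + \lambda A$ and $B_\lambda := \lambda B$ with $\tilde A := (-\Delta)I_N$.

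For the a priori estimate, first check that $(A_\lambda, B_\lambda)$ satisfies Assumption \ref{ass:2ndoderpara} uniformly in $\lambda\in [0,1]$: the stochastic parabolicity bound is preserved since $\Sigma^{\lambda}_{ij} = \lambda^2\Sigma_{ij}$ and $\Sigma_{ij}\geq 0$ as a matrix, so $-\lambda^2\Sigma_{ij}\geq -\lambda\Sigma_{ij}$. Next fix a radius $r>0$ and take a smooth partition of unity $\{\chi_k\}\subset C_c^\infty(\R^d)$ subordinate to a locally finite cover $\{B_{2r}(x_k)\}_k$ of $\R^d$ with bounded overlap and $\|D^j\chi_k\|_\infty\lesssim r^{-j}$. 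Let $A_{\lambda,k}(t), B_{\lambda,k}(t)$ be the operators obtained from $A_\lambda, B_\lambda$ by freezing the coefficients at $x_k$. By Theorem \ref{thm:secondorderxind}, $(A_{\lambda,k}, B_{\lambda,k})\in \SMR(p,\alpha,T)$ with a uniform constant $C_0$ depending only on $p,\alpha,T,N,d,K,\mu$. Given a strong solution $U$ of the $(A_\lambda,B_\lambda)$-problem in $E_0$, the localized process $\chi_k U$ solves the $(A_{\lambda,k}, B_{\lambda,k})$-problem with right-hand sides
\[\chi_k f + \chi_k(A_{\lambda,k}-A_\lambda)U - [A_{\lambda,k},\chi_k]U, \qquad \chi_k g + \chi_k(B_\lambda - B_{\lambda,k})U + [B_{\lambda,k},\chi_k]U.\]
On $\supp \chi_k \subset B_{2r}(x_k)$, Assumption \ref{ass:2ndoderpara}(3) gives $\|(A_\lambda-A_{\lambda,k})U\|_{L^p}\leq \zeta(2r)\|D^2 U\|_{L^p(B_{2r}(x_k))}$, and similarly for $B$, while the commutators are lower order, bounded in terms of $\|U\|_{H^{1,p}}$. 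Applying the $\SMR$ estimate for $(A_{\lambda,k}, B_{\lambda,k})$ to $\chi_k U$, raising to the $p$-th power and summing over $k$ (this uses Fubini in space and forces $p=q$, since $\sum_k \|\chi_k U\|_{L^p}^p\eqsim \|U\|_{L^p}^p$), choosing $r$ small enough that $C_0\zeta(2r)\leq \tfrac12$, and absorbing commutator terms via \eqref{eq:interpolationest} together with Gr\"onwall's inequality, one obtains a uniform-in-$\lambda$ a priori bound of the form $\|U\|_{E_0}\leq C(\|f\|_{Z_0} + \|g\|_{Z_{1/2}^\gamma})$.

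To upgrade to the full $E_\theta$ norm, rewrite the equation as $dU + \tilde A U\,\ud t = [f + (\tilde A - A_\lambda)U]\,\ud t + [g + B_\lambda U]\,\ud W_H$, and apply $\tilde A\in \SMR(p,\alpha,T)$ to conclude $\|U\|_{E_\theta}\lesssim \|f\|_{Z_0} + \|g\|_{Z_{1/2}^\gamma} + \|U\|_{E_0}$; combined with the previous step, this yields condition \eqref{it:methodconti} of Proposition \ref{prop:methodcont} uniformly in $\lambda\in[0,1]$. Since $\tilde A\in \SMR(p,\alpha,T)$, condition \eqref{it:methodcontii} of Proposition \ref{prop:methodcont} holds at $\lambda=0$, and the proposition then delivers $(A,B)\in \SMR(p,\alpha,T)$. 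The main obstacle in this plan is the bookkeeping of the localization step: the ``small'' term $\chi_k(A_\lambda-A_{\lambda,k})U$ has to be absorbed into the left-hand side (dictating the choice of $r$ via the modulus $\zeta$), the commutators must be controlled via interpolation, and the $L^p$-summation in $k$ forces $p=q$ -- which is precisely why the theorem is restricted to this range, in contrast with Theorem \ref{thm:secondorderxind}.
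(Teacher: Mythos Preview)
Your overall strategy is correct and coincides with the paper's: reduce to showing $(A,B)\in\SMR(p,\alpha,T)$, obtain an a priori $E_0$-estimate by localization and freezing (using the $x$-independent result, Theorem~\ref{thm:secondorderxind}), upgrade to an $E_\theta$-estimate, and then apply the method of continuity (Proposition~\ref{prop:methodcont}) with $\tilde A=-\Delta$. The details differ in two places, both worth noting.

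First, for the $E_0$-estimate, the paper does not use a discrete partition of unity. Instead it fixes a single bump $\phi$ with $\|\phi\|_{L^p}=1$, sets $V^\xi=\phi(\cdot-\xi)U$, applies a ``Freezing Lemma'' (Lemma~\ref{lem:Freeze}) to each $V^\xi$, and then integrates over $\xi\in\R^d$ via a Fubini identity (Lemma~\ref{lem:loc}), which gives the exact equality $\int_{\R^d}\|\phi(\cdot-\xi)v\|_{L^p}^p\,d\xi=\|v\|_{L^p}^p$ and avoids the bounded-overlap bookkeeping. Your discrete version is equally valid and standard; the continuous version is just a bit cleaner.

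Second, and more interestingly, your route to the $E_\theta$-estimate is more direct than the paper's. You rewrite the equation with $\tilde A=-\Delta$ on the left, observe that $(\tilde A-A_\lambda)U\in Z_0$ and $B_\lambda U\in Z_{1/2}^\gamma$ are controlled by $\|U\|_{E_0}$ (the latter using that the $\sigma_{jkn}$ are in $W^{1,\infty}$), and then apply $\tilde A\in\SMR(p,\alpha,T)$ directly. The paper instead repeats the localization for the $E_\theta$-norm, which forces it to commute fractional powers $(-\Delta)^{1/2-\theta}$ past the cutoffs $\phi(\cdot-\xi)$; this is handled by a pointwise commutator formula (Lemma~\ref{lem:commutatorRiesz}) and the norm equivalences of Lemma~\ref{lem:equivalentnormsH2}, together with change-of-aperture type estimates after integrating in $\xi$. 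Your bootstrap via the reference operator sidesteps all of this at the cost of invoking uniqueness for the $\tilde A$-problem, which is unproblematic here.
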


To prove this, arguing as in Theorem \ref{thm:secondorderxind} it follows from Theorem \ref{thm:nonlinearperturbation} that it suffices to prove that $(A,B)\in \SMR(p,\alpha,T)$. We will use a variation of a localization argument of \cite[Section 6]{Kry}. For this we start with the following lemma.

\begin{lemma}[Freezing lemma]\label{lem:Freeze}
Let $T\in (0,\infty)$, $p\in [2, \infty)$ and $\alpha\in [0,\frac{p}{2}-1)$ (where $\alpha=0$ if $p=2$). Set $\delta = 1- \frac{1+\alpha}{p}$ and $I=(0,T)$.
Suppose Assumption \ref{ass:2ndoderpara} holds. Let $f\in L^p_{\F}(\Omega\times I,w_{\alpha};L^p(\R^d;\C^N))$ and $g\in L^p_{\F}(\Omega\times I,w_{\alpha};H^{1,p}(\R^d;\ell^2))$.
Assume $U$ is a strong solution of
\begin{equation}\label{eq:stochsystemsechulp}
\left\{
  \begin{array}{ll}
  dU(t)+ A(t) U(t)\ud t & = f(t)\ud t +  \sum_{n\geq 1}(b_n(t) U(t) + g_n(t)) \ud w_n(t), \\
    U(0) & = 0,
  \end{array}
\right.
\end{equation}
There exists an $\varepsilon =  \varepsilon(p,q,\alpha, T,N,d,K,\mu)$ such that
if $U$ has support in $B_{\varepsilon}(x_0) = \{x\in \R^d:|x-x_0|<\varepsilon\}$ for some $x_0$, then for each $\theta\in [0,\frac12)$ there is a constant $C$ such that
\begin{equation}\label{eq:MRestfreeze}
\begin{aligned}
&\|U\|_{L^p(\Omega;H^{\theta,p}(I,w_{\alpha};H^{2(1-\theta),p}(\R^d)))} \\ &\qquad \leq
C\|f\|_{L^{p}(\Omega; L^p(I,w_{\alpha};L^p(\R^d;\C^N)))} + C\|(g_{n})_{n\in\N}\|_{L^{p}(\Omega; L^p(I,w_{\alpha};H^{1,p}(\R^d;\ell^2)))}.
\end{aligned}
\end{equation}
\end{lemma}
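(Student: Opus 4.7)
The plan is to freeze the leading coefficients at $x_0$ and perturb around the $x$-independent case already resolved in Theorem~\ref{thm:secondorderxind}. I set
\[ A_0(t,\omega) = -\sum_{i,j=1}^d a_{ij}(t,\omega,x_0)\partial_i\partial_j, \qquad (b_{0,n}(t,\omega)\phi)_k = \sum_{j=1}^d \sigma_{jkn}(t,\omega,x_0)\partial_j \phi_k, \]
and let $B_0$ be defined accordingly. These operators are $x$-independent and satisfy Assumption~\ref{ass:2ndoderpara} with the same constants $\mu, K$. Consequently Theorem~\ref{thm:secondorderxind} (with $q=p$) gives $(A_0,B_0)\in\SMR(p,\alpha,T)$ with a constant $C_0$ depending only on $(p,\alpha,T,N,d,K,\mu)$, uniformly in $x_0$.

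Rewriting \eqref{eq:stochsystemsechulp} as
\[ dU+A_0 U\,dt = \bigl[f+(A_0-A)U\bigr]\,dt + \bigl[B_0 U + (B-B_0)U + g\bigr]\,dW_H \]
and applying the SMR estimate for $(A_0,B_0)$ yields, for each $\theta\in[0,\tfrac12)$,
\begin{align*}
\|U\|_{L^p(\Omega;H^{\theta,p}(I,w_\alpha;H^{2(1-\theta),p}))} &\leq C_\theta \bigl[\|f\|_{L^p(L^p)}+\|g\|_{L^p(H^{1,p}(\ell^2))}\\
&\quad+\|(A_0-A)U\|_{L^p(L^p)}+\|(B-B_0)U\|_{L^p(H^{1,p}(\ell^2))}\bigr].
\end{align*}
Since $U(t,\omega,\cdot)$ is supported in $B_\varepsilon(x_0)$, Assumption~\ref{ass:2ndoderpara}(3) controls the perturbations pointwise and I obtain $\|(A_0-A)U\|_{L^p}\leq C\zeta(\varepsilon)\|U\|_{H^{2,p}}$, while Leibniz' rule gives $\|(B-B_0)U\|_{H^{1,p}(\ell^2)}\leq C\zeta(\varepsilon)^{1/2}\|U\|_{H^{2,p}}+CK\|\nabla U\|_{L^p}$. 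The main obstacle is the last term: the derivative of $\sigma$ is only bounded, not small in $\varepsilon$, so one cannot absorb it directly.

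To handle this, I will use the interpolation inequality $\|\nabla U\|_{L^p}\leq \eta\|U\|_{H^{2,p}}+C_\eta\|U\|_{L^p}$, valid for any $\eta>0$. Choosing $\varepsilon$ and $\eta$ so small that $C_0\cdot C(\zeta(\varepsilon)+\zeta(\varepsilon)^{1/2}+K\eta)\leq \tfrac12$, the $H^{2,p}$-term on the right (with $\theta=0$) can be absorbed into the left, giving
\[ \|U\|_{L^p(\Omega;L^p(I,w_\alpha;H^{2,p}))}\leq C\,D+C\|U\|_{L^p(\Omega;L^p(I,w_\alpha;L^p))}, \]
where $D$ denotes the right-hand side of \eqref{eq:MRestfreeze}. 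To eliminate the remaining lower-order term, I will apply Lemma~\ref{lem:simpleestt} on each subinterval $(0,t)$ to obtain a bound of the form $\|U(t)\|^p_{L^p(\Omega;L^p)}\lesssim D^p+\|U\|^p_{L^p(\Omega;L^p((0,t),w_\alpha;H^{2,p}))}$; inserting this back produces a weighted Gr\"onwall inequality for $\phi(t):=\|U\|^p_{L^p(\Omega;L^p((0,t),w_\alpha;H^{2,p}))}$ which closes thanks to $w_\alpha\in L^1(0,T)$. The $\theta>0$ bounds in \eqref{eq:MRestfreeze} follow by reinserting the just-obtained $L^p(L^p(H^{2,p}))$-control into the $(A_0,B_0)$-SMR estimate at each $\theta\in(0,\tfrac12)$.
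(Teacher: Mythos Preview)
Your proposal is correct and follows essentially the same route as the paper's proof: freeze the coefficients at $x_0$, invoke Theorem~\ref{thm:secondorderxind} for the resulting $x$-independent operators, bound the $(A_0-A)$ and $(B_0-B)$ perturbations via the support condition and Assumption~\ref{ass:2ndoderpara}(3), use the interpolation inequality to trade the non-small $\nabla U$ term for a small $H^{2,p}$ term plus a large $L^p$ term, absorb at $\theta=0$, and close with Lemma~\ref{lem:simpleestt} plus Gronwall before returning to general $\theta$. The only cosmetic differences are that the paper reuses $\varepsilon$ as both the ball radius and the interpolation parameter (whereas you introduce a separate $\eta$), and the paper runs Gronwall on $t\mapsto\|U(t)\|_{L^p(\Omega;L^p)}$ rather than on your integrated quantity $\phi(t)$---both versions are equivalent.
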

\begin{proof}
Without loss of generality we can assume $x_0 = 0$. In order to simplify the notation let
\begin{align*}
Y_{\theta,\eta,t} &= L^p(\Omega;H^{\theta,p}((0,t),w_{\alpha};H^{2\eta,p}(\R^d;\C^N)))
\\  Y_{\theta,\eta,t}(\ell^2) &= L^p(\Omega;H^{\theta,p}((0,t),w_{\alpha};H^{2(1-\eta),p}(\R^d;\ell^2(\C^N)))).
\end{align*}

Let $\tilde{A}(t)$ and $\tilde{b_n}(t)$ be given by
\begin{align*}
\tilde{A}(t)\phi& = - \sum_{i,j=1}^d a_{ij}(t,\omega,0)\partial_j \partial_k \phi(x),
\\ (\tilde{b}_n(t,\omega) \phi)(x) & = \Big(\sum_{j=1}^d \sigma_{jkn}(t,\omega,0) \partial_j \phi_{k}(x)\Big)_{k=1}^N.
\end{align*}
Furthermore, let
\[\tilde{f}(\cdot,U):=f(\cdot,U) + (\tilde{A} - A)U, \ \ \text{and} \ \ \tilde{g}_n(\cdot,U):= g_n(\cdot,U)+(\tilde{b}_n-b_n)U.\]
Clearly, $U$ satisfies
\begin{align*}
dU(t)+ \tilde{A}(t) U(t)\ud t  & = \tilde{f}(t,U(t)))\ud t +  \sum_{n\geq 1}(\tilde{b}_n(t) U(t) +\tilde{g}_n(t,U(t))) \ud w_n(t).
\end{align*}
Therefore, by Theorem \ref{thm:secondorderxind}
\begin{align*}
\|U\|_{Y_{\theta,(1-\theta),T}} &\leq C_{\theta}K_{0,\tilde{f},\tilde{g}}
\\ & \leq  C_{\theta}K_{0,f,g} + C\|(\tilde{A} - A)U\|_{Y_{0,0,T}}  + C \|((\tilde{b}_n-b_n)U)_{n\geq 1}\|_{Y_{0,1,T}}.
\end{align*}
To estimate the latter note that by Assumption \ref{ass:2ndoderpara} and the support condition on $U$, we have
\begin{align*}
\|(\tilde{A} - A)U\|_{Y_{0,0,T}}\leq \zeta(\varepsilon) \|U\|_{Y_{0,1,T}}.
\end{align*}
Similarly, for the $b_n$-term, by the product rule and Assumption \ref{ass:2ndoderpara}, we obtain (with $K$ as in Assumption \ref{ass:2ndoderpara}) that for all $t \in [0,T]$,
\begin{align*}
\|((\tilde{b}_n-b_n)U(t))_{n\geq 1}\|_{W^{1,p}(\R^d;\ell^2(\C^N)))}& \leq K \|U(t)\|_{W^{1,p}(\R^d;\C^N)} + \zeta(\varepsilon) \|U(t)\|_{W^{2,p}(\R^d;\C^N)}
\\ & \leq C_{\varepsilon} \|U(t)\|_{L^p(\R^d;\C^N)} + (\zeta(\varepsilon) +\varepsilon)\|U(t)\|_{W^{2,p}(\R^d;\C^N)},
\end{align*}
where in the last step we used \cite[Corollary 1.5.2]{Kry2008}. We can conclude that
\begin{align}\label{eq:niceest1}
\|U\|_{Y_{\theta,(1-\theta),T}} & \leq C_{\theta}K_{0,f,g} + C (2\zeta(\varepsilon) + \varepsilon)\|U\|_{Y_{0,1,T}} + C_{\varepsilon} \|U\|_{Y_{0,0,T}}.
\end{align}
Now let $\theta = 0$ and choose $\varepsilon>0$ such that $C (2\zeta(\varepsilon) + \varepsilon)\leq \frac12$. Then we obtain
\begin{align}\label{eq:niceest2}
\|U\|_{Y_{0,1,T}} \leq CK_{0,f,g} + C\|U\|_{Y_{0,0,T}}.
\end{align}
The same estimate holds with $T$ replaced by $t$.

Since $U$ is a strong solution of \eqref{eq:stochsystemsechulp}, Lemma \ref{lem:simpleestt}, the properties of $A$ and $b_n$, and \eqref{eq:niceest2}
give that, for all $t\in [0,T]$,
\begin{align*}
\|U(t)\|_{L^p(\Omega;L^p(\R^d;\C^N))} &\leq C\|f\|_{Y_{0,0,t}} + C\|g\|_{Y_{0,\frac12,t}(\ell^2)} + C\|U\|_{Y_{0,1,t}}
\\ & \leq C\|f\|_{Y_{0,0,t}} + C\|g\|_{Y_{0,\frac12,t}(\ell^2)} + C\|U\|_{Y_{0,0,t}}.
\end{align*}
Therefore, Gronwall's lemma gives that for all $t\in [0,T]$,
\[\|U(t)\|_{L^p(\Omega;L^p(\R^d;\C^N))} \leq C\|f\|_{Y_{0,0,T}} + C\|g\|_{Y_{0,\frac12,T}(\ell^2)}.\]
and thus
$$\|U\|_{Y_{0,0,T}} \leq 2CK_{0,f,g}.$$
Substituting the latter estimate in \eqref{eq:niceest1} and \eqref{eq:niceest2} we find that for all $\theta\in [0,\frac12)$,
\[\|U\|_{Y_{\theta,(1-\theta),T}}  \leq C_{\theta}K_{0,f,g}.\]
\end{proof}

To be able to apply the freezing lemma, one also needs the following elementary Fubini result, which is trivial if $I = \R$, but for bounded intervals can still be deduced from an interpolation argument.

\begin{lemma}
\label{lem:loc}
Let $\theta \in [0,1]$, $p \in (1,\infty)$ and $w\in A_p$. Let $I= (0,T)$ for some $T\in (0,\infty]$.
Let $\phi \in L^{p}(\R^{d})$ be such that $\|\phi\|_{L^{p}(\R^{d})} = 1$.
We have that
$$
\|f\|_{H^{\theta,p}(I,w;L^{p}(\R^{d};\C^N))} \eqsim \Big( \int \limits _{\R^{d}}
\|(t,x) \mapsto \phi(x-\xi)f(t,x)\|_{H^{\theta,p}(I,w;L^{p}(\R^{d};\C^N))} ^{p} \ud\xi \big)^{\frac{1}{p}},
$$
for all $f \in H^{\theta,p}(I,w;L^{p}(\R^{d}))$.
\end{lemma}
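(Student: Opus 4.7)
The plan is to reduce, via complex interpolation, to the trivial endpoint cases $\theta \in \{0,1\}$.

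At $\theta = 0$, Fubini's theorem combined with $\|\phi\|_{L^p(\R^d)} = 1$ gives, for each fixed $t \in I$,
\[
\int_{\R^d} \|\phi(\cdot-\xi) f(t,\cdot)\|_{L^p(\R^d;\C^N)}^p \, d\xi = \int_{\R^d}\int_{\R^d} |\phi(x-\xi)|^p |f(t,x)|^p \, dx\,d\xi = \|f(t,\cdot)\|_{L^p(\R^d;\C^N)}^p,
\]
and integrating against $w(t)\,dt$ over $I$ yields the desired identity. At $\theta = 1$, Proposition \ref{prop:funcspacesUMD}(1) identifies $H^{1,p}$ with $W^{1,p}$; since $\phi(x-\xi)$ is $t$-independent, the operation $f \mapsto \phi(\cdot-\xi)f$ commutes with $\partial_t$, so applying the $\theta=0$ identity to both $f$ and $\partial_t f$ gives the equality at $\theta = 1$.

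For intermediate $\theta \in (0,1)$, I would combine Proposition \ref{prop:funcspacesUMD}(2) with the standard commutation of Bochner $L^p$-spaces with complex interpolation to obtain both
\[
H^{\theta,p}(I,w;L^p(\R^d;\C^N)) = [L^p(I,w;L^p(\R^d;\C^N)), H^{1,p}(I,w;L^p(\R^d;\C^N))]_\theta
\]
and
\[
L^p(\R^d;H^{\theta,p}(I,w;L^p(\R^d;\C^N))) = [L^p(\R^d;L^p(I,w;L^p)), L^p(\R^d;H^{1,p}(I,w;L^p))]_\theta.
\]
The linear operator $\Phi f(\xi)(t,x) := \phi(x-\xi) f(t,x)$ is an isometry between the corresponding endpoints by the previous step, so complex interpolation yields
\[
\|\Phi f\|_{L^p(\R^d;H^{\theta,p}(I,w;L^p(\R^d;\C^N)))} \leq C\|f\|_{H^{\theta,p}(I,w;L^p(\R^d;\C^N))}.
\]

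For the converse inequality, I would pick $\psi \in L^{p'}(\R^d)$ with $\int_{\R^d} \phi\psi\,dy = 1$ (for instance $\psi = |\phi|^{p-2}\overline{\phi}$, which lies in $L^{p'}$ with norm one since $\|\phi\|_p = 1$), and introduce the averaging operator
\[
(\Psi F)(t,x) := \int_{\R^d} \psi(x-\xi)\, F(\xi)(t,x)\,d\xi.
\]
H\"older's inequality and Fubini show $\Psi$ is bounded from $L^p(\R^d;L^p(I,w;L^p))$ into $L^p(I,w;L^p)$; since $\Psi$ commutes with $\partial_t$ it is also bounded at the $W^{1,p}$-level. The same interpolation argument then makes $\Psi$ bounded from $L^p(\R^d;H^{\theta,p}(I,w;L^p))$ into $H^{\theta,p}(I,w;L^p)$, and the identity $\Psi \circ \Phi = \mathrm{id}$ (which follows from $\int \psi\phi = 1$) produces the matching lower bound. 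The only ingredient beyond the endpoint Fubini and the cited propositions is the commutation of Bochner $L^p$-spaces with complex interpolation, which is standard; the main point is simply to set up the retraction--coretraction pair $(\Phi,\Psi)$ correctly.
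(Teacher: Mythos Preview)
Your proposal is correct and follows essentially the same route as the paper: both introduce the retraction--coretraction pair $(\Phi,\Psi)$ (called $(P,Q)$ in the paper) with $\Psi\Phi=\mathrm{id}$, verify boundedness at the endpoints $\theta\in\{0,1\}$ via Fubini/H\"older and commutation with $\partial_t$, and then interpolate using Proposition~\ref{prop:funcspacesUMD}. Your explicit choice $\psi=|\phi|^{p-2}\overline{\phi}$ is a nice addition that the paper leaves implicit.
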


\begin{proof}
Let $\psi \in L^{p'}(\R^{d})$ of norm one be such that $\lb \phi, \psi\rb =1$. Consider the operators defined by
\begin{align*}
Pf(t,x,\xi) &= \phi(x-\xi)f(t,x) \quad t \in I, \quad x,\xi \in\R^{d}, \\
QF(t,x) & = \int \limits _{\R^{d}} \psi(x-\xi)F(t,x,\xi) \ud \xi \quad t \in I, \quad x \in\R^{d},
\end{align*}
for $f \in L^p(I,w;L^{p}(\R^{d};\C^N))$ and $F \in L^p(\R^d;L^p(I,w;L^{p}(\R^{d};\C^N)))$. Note that $QPf = f$.
By complex interpolation (see Proposition \ref{prop:funcspacesUMD}), it is thus enough to show that
\begin{align*}
P:W^{\theta,p}(I,w;L^{p}(\R^{d};\C^N))\to L^{p}(\R^{d};W^{\theta,p}(I,w;L^{p}(\R^{d};\C^N))), \\
Q:L^{p}(\R^{d};W^{\theta,p}(I,w;L^{p}(\R^{d};\C^N)))\to W^{\theta,p}(I,w;L^{p}(\R^{d};\C^N)),
\end{align*}
for $\theta = 0,1$. Let us consider $\theta = 0$ first.
For $Q$, by H\"older's inequality we have that
\begin{align*}
\|QF\|_{L^{p}(I,w;L^{p}(\R^{d};\C^N)))}
& = \Big( \int \limits _{I\times \R^{d}} |\int \limits _{\R^{d}} \psi(x-\xi)F(t,x,\xi)\ud \xi|^{p}  w(t) \ud t \ud x \Big)^{\frac{1}{p}}
\\
& \leq  \|F\|_{L^p(\R^d;L^{p}(I,w;L^{p}(\R^{d};\C^N)))}.
\end{align*}
and hence $\|Q\|\leq 1$ for $\theta=0$. The above inequalities with $f$ replaced by $\partial_{t}f$, and $F$ replaced by $\partial _{t} F$, then gives $\|Q\|\leq 1$ for $\theta=1$. By Fubini it is straightforward to check that $P$ is an isometry for $\theta = 0,1$, and thus the result follows.
\end{proof}

We also need the following simple commutator formula.
\begin{lemma}\label{lem:commutatorRiesz}
Let $\phi\in C^1_c(\R^d)$ and $\psi\in W^{1,p}(\R^d)$. Then for all $s\in (0,1)$
\begin{align*}
(-\Delta)^{s/2}(\phi \psi)(x) - \phi(x) (-\Delta)^{s/2}(\psi)(x)  =
c_{d,s}\int_{\R^d} (\phi(x+h) - \phi(x)) |h|^{-s-d} \psi(x+h)   \ud h
\end{align*}
\end{lemma}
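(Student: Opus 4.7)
The plan is to deduce this commutator identity directly from the singular integral representation of the fractional Laplacian. Recall that for Schwartz functions $f$ and $s\in(0,1)$ one has, with a suitable constant $c_{d,s}$,
\[
(-\Delta)^{s/2} f(x) \;=\; c_{d,s}\,\mathrm{P.V.}\int_{\R^d} \frac{f(x)-f(x+h)}{|h|^{d+s}}\,\ud h.
\]
(Up to the sign convention we are free to choose for $c_{d,s}$, this gives exactly the constant appearing in the statement.) My first step would be to apply this representation formally to $\phi\psi$ and to $\psi$:
\[
(-\Delta)^{s/2}(\phi\psi)(x) = c_{d,s}\,\mathrm{P.V.}\int_{\R^d} \frac{\phi(x)\psi(x)-\phi(x+h)\psi(x+h)}{|h|^{d+s}}\,\ud h,
\]
\[
\phi(x)(-\Delta)^{s/2}\psi(x) = c_{d,s}\,\mathrm{P.V.}\int_{\R^d} \frac{\phi(x)\psi(x)-\phi(x)\psi(x+h)}{|h|^{d+s}}\,\ud h,
\]
and subtract. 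The numerator inside the first integral minus the numerator of the second equals
\[
-\bigl(\phi(x+h)-\phi(x)\bigr)\psi(x+h),
\]
which, after absorbing the sign into $c_{d,s}$, gives precisely the right-hand side of the lemma.

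The second step is to justify that, for $\phi\in C^1_c(\R^d)$ and $s\in(0,1)$, the resulting integral converges absolutely and no principal value is needed. This is where the hypothesis on $\phi$ is essential: since $|\phi(x+h)-\phi(x)|\leq \|\nabla\phi\|_\infty \min(|h|,1)$ and $\phi$ has compact support (so that $\psi(x+h)$ may be restricted, for fixed $x$, to a bounded subset of $h$ plus the far field where $\phi(x+h)=0$ for large $|h|$), we obtain
\[
\int_{\R^d}\frac{|\phi(x+h)-\phi(x)|\,|\psi(x+h)|}{|h|^{d+s}}\,\ud h
\;\lesssim\; \int_{|h|\leq 1} \frac{|h|}{|h|^{d+s}}\bigl(M\psi\bigr)(x)\ud h + \int_{|h|\geq 1} \frac{|\psi(x+h)|}{|h|^{d+s}}\ud h,
\]
both of which are finite a.e.\ using the maximal function $M\psi$ and $\psi\in L^p$. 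This absolute convergence justifies splitting the P.V.\ integrals in the previous step, since the singularity at $h=0$ of the two original integrals then cancels in a genuinely integrable way after subtraction.

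The third step handles the fact that $\psi$ is only in $W^{1,p}$ rather than Schwartz. Here I would approximate $\psi$ by $\psi_n\in\mathcal{S}(\R^d)$ in $W^{1,p}$, apply the already-established identity to $(\phi,\psi_n)$, and pass to the limit on both sides: on the left using $L^p$-continuity of the Fourier multiplier $(-\Delta)^{s/2}:W^{1,p}\to W^{1-s,p}\hookrightarrow L^p_{\mathrm{loc}}$ (and pointwise a.e.\ along a subsequence), on the right using the absolute-convergence bound above to invoke dominated convergence.

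The main obstacle, and really the only delicate point, is the interchange of the two principal-value integrals in the first step: one should not subtract two divergent P.V.\ limits without care. The cleanest way is to perform the computation on truncated integrals $\int_{|h|\geq\varepsilon}$ (which are absolutely convergent for $\psi\in\mathcal{S}$), verify that the combined expression has an integrable limit as $\varepsilon\downarrow 0$ by the bound of step two, and only then drop the truncation. Once that justification is in place, the algebraic cancellation is immediate and the lemma follows.
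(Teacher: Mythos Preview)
Your proposal is correct and follows essentially the same approach as the paper: the paper simply invokes the singular integral representation $(-\Delta)^{s/2}\psi(x)=c_{d,s}\int_{\R^d}\frac{\psi(x+h)-\psi(x)}{|h|^{d+s}}\,\ud h$ (citing \cite{Kwa17}) and says the identity follows immediately. Your version supplies the justifications (absolute convergence of the commutator integral, density approximation for $\psi\in W^{1,p}$) that the paper leaves implicit, but the core idea is identical.
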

\begin{proof}
The identity immediately follows from the following well-known identity (see \cite[Theorem 1.1(e)]{Kwa17})
\[(-\Delta)^{s/2} \psi(x) = c_{d,s} \int_{\R^d} \frac{\psi(x+h) - \psi(x)}{|h|^{d+s}}\ud h.\]
\end{proof}

\begin{proof}[Proof of Theorem \ref{thm:secondorderxdep}]
As already mentioned it suffices to prove that $(A,B)\in \SMR(p,\alpha,T)$
(in particular, one only has to treat the problem with $u_{0}=0$).
To prove this we will use Proposition \ref{prop:methodcont} and Lemma \ref{lem:Freeze}.

{\em Step 1:}  Let $\theta \in (0,\frac{1}{2})$ and $U \in Y_{0,1,T} \cap Y_{\theta,1-\theta,T}$ be a solution to \eqref{eq:stochsystemsec}. We first prove an a priori estimate in the $Y_{0,1,T}$ norm. Let $\varepsilon>0$ be as in Lemma \ref{lem:Freeze}. Let $\phi\in C^\infty_c(\R^d)$ be such that $\supp(\phi)\subseteq B_{\varepsilon}(0)$ and $\|\phi\|_{L^p(\R^d)} = 1$. Fix $\xi\in\R^d$ and let $V^{\xi}(t)(x) = U(t)(x) \phi(x-\xi)$. Then $\supp V^{\xi}\subseteq [0,T]\times B_{\varepsilon}(\xi)$, $V^{\xi}(t)(0) = 0$ and $V^{\xi}$ satisfies
\begin{align*}
 dV^{\xi}(t)+ A(t) V^{\xi}(t)\ud t & = \tilde{f}(t)\ud t +  \sum_{n\geq 1}(b_n(t) V^{\xi}(t) + \tilde{g}_n(t)) \ud w_n(t),
\end{align*}
where
\begin{align*}
\tilde{f} & = \phi(\cdot-\xi) f  + A V^{\xi} - \phi(\cdot-\xi) A U = \phi(\cdot-\xi)f + \sum_{|\beta|\leq 2, |\gamma|\leq 1} c_{\beta,\gamma} (\partial^{\beta}\phi(\cdot-\xi)) \partial^{\gamma} U,
\\ \tilde{g}_n & = \phi(\cdot-\xi) g_n  + \phi(\cdot-\xi) b_n U - b_n V^{\xi} = \phi(\cdot-\xi)g_n + \Big(\sum_{j=1}^d \sigma_{jkn}(t,\omega,x) \partial_j\phi(x-\xi) U_k\Big)_{k=1}^N
\end{align*}
for some coefficients $c_{\beta,\gamma}\in L^\infty((0,T)\times\Omega\times \R^d;\C^{N\times N})$. Now Lemma \ref{lem:Freeze} (in the notation of its proof) implies that for any $\theta'\in [0,\frac12)$ there exists a $C$ such that
\begin{equation}\label{eq:thetaVxiest}
\begin{aligned}
\|V^{\xi}\|_{Y_{\theta', 1-\theta', T}} &\leq C\|\tilde{f}\|_{Y_{0, 0, T}} + C\|\tilde{g}\|_{Y_{0,\frac12,T}(\ell^2)}
\\ & \leq C\|\phi(\cdot-\xi) f\|_{Y_{0, 0, T}} + C\|\phi(\cdot-\xi) g\|_{Y_{0,\frac12,T}(\ell^2)} \\ & \qquad +
C\sum_{|\beta|\leq 2} \|\partial^{\beta}(\phi(\cdot-\xi)) U\|_{Y_{0, \frac12, T}},
\end{aligned}
\end{equation}
First let $\theta'=0$. For $|\alpha|=2$ we have (by the product rule)
\begin{align*}
\|\phi(\cdot-\xi)\partial^{\alpha}U(t)\|_{L^p(\R^d)} \leq \|V^{\xi}(t)\|_{W^{2,p}(\R^d)} + C\sum_{|\beta|\leq 2, |\gamma|\leq 1}\|\partial^{\beta} \phi(\cdot-\xi) \partial^{\gamma} U(t)\|_{L^p(\R^d)}.
\end{align*}
Therefore, after an integration over $\xi\in \R^d$ using Lemma \ref{lem:loc}, and using the classical identity $W^{k,p}(\R^d;\C^N) = H^{k,p}(\R^d;\C^N)$ for $k\in \N$ we can conclude
\begin{align*}
\|U\|_{Y_{0, 1, T}} & \leq C\|f\|_{Y_{0, 0, T}} + C\|g\|_{Y_{0,\frac12,T}(\ell^2)} +
C\|U\|_{Y_{0, \frac12, T}}.
\end{align*}
By using interpolation inequality \eqref{eq:interpolationest} for $k=m=1$ we find
\begin{align*}
\|U\|_{Y_{0, 1, T}} & \leq C\|f\|_{Y_{0, 0, T}} + C\|g\|_{Y_{0,\frac12,T}(\ell^2)} +
C\|U\|_{Y_{0, 0, T}}.
\end{align*}
The same is true with $T$ replaced by $t$. Therefore, the term $\|U\|_{Y_{0, 0, T}}$ can be estimated by Gronwall's lemma in the same way as in the proof of Lemma \ref{lem:Freeze}. Therefore, we obtain
\begin{align}\label{eq:aprioritheta0}
\|U\|_{Y_{0, 1, T}} & \leq C\|f\|_{Y_{0, 0, T}} + C\|g\|_{Y_{0,\frac12,T}(\ell^2)}.
\end{align}
This is the required a priori estimate.

\medskip

{\em Step 2:}  Let $\theta \in (0,\frac{1}{2})$ and $U \in Y_{0,1,T} \cap Y_{\theta,1-\theta,T}$ be a solution to \eqref{eq:stochsystemsec}.  We now prove an a priori estimate in the $Y_{\theta,1-\theta,T}$ norm.
For this we use \eqref{eq:thetaVxiest} again. Observe that
\begin{align*}
\phi(\cdot-\xi) \partial_j U =  \partial_j V^{\xi} + \partial_j(\phi(\cdot-\xi)) U .
\end{align*}
Therefore, by Lemma \ref{lem:commutatorRiesz} we can write
\begin{align*}
\phi(\cdot-\xi) & (-\Delta)^{\frac12-\theta} \partial_j U =  (-\Delta)^{\frac12-\theta}(\phi(\cdot-\xi)\partial_j U)
\\ & -C_{d,\theta}\int_{\R^d} (\phi(\cdot+h-\xi) - \phi(\cdot-\xi)) |h|^{-1+2\theta-d} \partial_j U(\cdot+h)   \ud h
 \\ & = (-\Delta)^{\frac12-\theta} \partial_j V^{\xi} + (-\Delta)^{\frac12-\theta} ( \partial_j(\phi(\cdot-\xi)) U)
 \\ & \qquad - C_{d,\theta}\int_{\R^d} (\phi(\cdot+h-\xi) - \phi(\cdot-\xi)) |h|^{-1+2\theta-d} \partial_j U(\cdot+h)   \ud h
\end{align*}
It thus follows from \eqref{eq:thetaVxiest} that
\begin{align*}
\|\phi(\cdot-\xi)  &(-\Delta)^{\frac12-\theta} \partial_j U\|_{Y_{\theta,0,T}}
  \leq C\|\phi(\cdot-\xi) f\|_{Y_{0, 0, T}} + C\|\phi(\cdot-\xi) g\|_{Y_{0,\frac12,T}(\ell^2)} \\ & \qquad \qquad\qquad \qquad\qquad \qquad+
C\sum_{|\beta|\leq 2} \|\partial^{\beta}(\phi(\cdot-\xi)) U\|_{Y_{0, \frac12, T}}
\\ & \qquad \qquad \qquad\qquad \qquad\qquad+C\|(-\Delta)^{\frac12-\theta} ( \partial_j(\phi(\cdot-\xi))  U)\|_{Y_{\theta, 0, T}}
 \\ & +C\int_{\R^d} \Big\|(\phi(\cdot+h-\xi) - \phi(\cdot-\xi)) |h|^{-1+2\theta-d}   \partial_j U(\cdot)(\cdot+h) \Big\|_{Y_{\theta,0,T}}  \ud h
\end{align*}
By Lemma \ref{lem:equivalentnormsH} and Proposition \ref{prop:funcspacesUMD} we have
\begin{align*}
\|(-\Delta)^{\frac12-\theta} ( \partial_j(\phi(\cdot-\xi)) U)\|_{Y_{\theta, 0, T}}
& \leq C \| \partial_j(\phi(\cdot-\xi)) U\|_{Y_{\theta, \frac12-\theta, T}}
\\ & \leq C\| \partial_j(\phi(\cdot-\xi)) U\|_{Y_{\theta, \frac12, T}}
+ C \|( \partial_j(\phi(\cdot-\xi)) U\|_{Y_{\theta, 0, T}}
\end{align*}
Therefore, substituting this in the penultimate estimate and using Lemma \ref{lem:loc}, we obtain (noting that $1-\theta>\frac{1}{2}$ so $U\in Y_{\theta, \frac{1}{2},T}$):
\begin{equation}\label{eq:inbetweenest}
\begin{aligned}
\|(-\Delta)^{\frac12-\theta} \partial_j U\|_{Y_{\theta,0,T}}
 & \leq C\|f\|_{Y_{0, 0, T}} + C\|g\|_{Y_{0,\frac12,T}(\ell^2)}+ C \|U\|_{Y_{\theta, \frac12, T}}  + R.
\end{aligned}
\end{equation}
where $R$ is given by
\begin{align*}
R:= \int_{\R^d} \Big\|(\phi(\cdot+h-\xi) - \phi(\cdot-\xi)) |h|^{-1+2\theta-d}  \partial_jU(\cdot)(\cdot+h) \Big\|_{L^p(\R^d,\ud \xi;Y_{\theta,0,T})}  \ud h.
\end{align*}
Now write $R = R_1+ R_2$, where $R_1$ is the part of the integral for $|h|<1$ and $R_2$ is the part of the integral for $|h|\geq 1$.
Using the fundamental theorem of calculus, Fubini, the triangle inequality and the translation invariance of $Y_{\theta,0,T}$ in the space variable, we obtain
\begin{align*}
R_1& \leq \int_0^1 \int_{|h|<1} \Big\| |\nabla \phi(\cdot+ s h-\xi)| |h|^{2\theta-d}  \partial_jU(\cdot)(\cdot+h) \Big\|_{L^p(\R^d,\ud \xi;Y_{\theta,0,T})} \ud h \ud s
\\ & = C \| \partial_jU\|_{Y_{\theta,0,T}},
\end{align*}
where $C = C_{\phi} \int_{|h|<1}|h|^{2\theta-d} \ud h$. Similarly,
\begin{align*}
R_2& \leq \int_{|h|\geq 1 }\Big\|(\phi(\cdot+h-\xi) - \phi(\cdot-\xi)) |h|^{-1+2\theta-d} \partial_jU(\cdot)(\cdot+h) \Big\|_{L^p(\R^d,\ud \xi;Y_{\theta,0,T})}  \ud h
\\ & = C\|\partial_jU\|_{Y_{\theta,0,T}},
\end{align*}
where $C= C_{\phi} \int_{|h|\geq1}  |h|^{-1+2\theta-d} \ud h$. Combining this with \eqref{eq:inbetweenest}, summing over $j$ and using Lemma  \ref{lem:equivalentnormsH2} we obtain
\begin{align*}
\|U\|_{Y_{\theta,1-\theta,T}}
 & \leq C\|f\|_{Y_{0, 0, T}} + C\|g\|_{Y_{0,\frac12,T}(\ell^2)}+
 C\|U\|_{Y_{\theta, \frac12, T}}
\end{align*}
Now using the interpolation estimate of Proposition \ref{prop:funcspacesUMD} once more (which is possible because $1-\theta>\frac{1}{2}$), we can conclude that
\begin{align*}
\|U\|_{Y_{\theta,1-\theta,T}}
 & \leq C\|f\|_{Y_{0, 0, T}} + C\|g\|_{Y_{0,\frac12,T}(\ell^2)}+C \|U\|_{Y_{\theta, 0, T}}.
\end{align*}

In order to estimate $\|U\|_{Y_{\theta, 0, T}}$ note that by Lemma \ref{lem:simpleestt}, \eqref{eq:realcomplexconnect} and \eqref{eq:aprioritheta0},
\begin{align*}
\|U\|_{Y_{\theta, 0, T}} &\leq  C\|f\|_{Y_{0, 0, T}} + C\|g\|_{Y_{0,\frac12,T}(\ell^2)}+C \|A U\|_{Y_{0, 0, T}}
\\ & \leq C\|f\|_{Y_{0, 0, T}} + C\|g\|_{Y_{0,\frac12,T}(\ell^2)}+C \|U\|_{Y_{0, 1, T}}
\\ & \leq C\|f\|_{Y_{0, 0, T}} + C\|g\|_{Y_{0,\frac12,T}(\ell^2)}.
\end{align*}
We thus have the a priori estimate:
$$
\|U\|_{Y_{\theta,1-\theta,T}} \lesssim \|f\|_{Y_{0, 0, T}} + \|g\|_{Y_{0,\frac12,T}(\ell^2)}.
$$

{\em Step 3:} Now to prove the existence of a solution in $Y_{0,1,T} \cap Y_{\theta,1-\theta,T}$ let $\tilde{A} = -\Delta$. Then by Proposition \ref{prop:suffcondSMR} the problem \eqref{eq:stochsystemsec} with $A$ replaced by $\tilde{A}$ has a unique strong solution in $Y_{0,1,T} \cap Y_{\theta,1-\theta,T}$. Moreover, letting $A_{\lambda}$ and $B_{\lambda}$ be as in Proposition \ref{prop:methodcont} it follows from the previous steps that the a priori bounds hold uniformly in $\lambda\in [0,1]$. Therefore, Proposition \ref{prop:methodcont} and the text below it gives the existence of a solution $U \in Y_{0,1,T} \cap Y_{\theta,1-\theta,T}$ to \eqref{eq:stochsystemsec}.
\end{proof}

\begin{remark}
To obtain regularity in the scale $H^{s,p}(\R^d;\C^N)$ for $s\in \R$ in Theorem \ref{thm:secondorderxdep}, Remark \ref{rem:higherordersmoothness} applies again.
\end{remark}

\begin{remark}\
\begin{enumerate}
\item It would be natural to ask for an $L^p(L^q)$-theory in Theorem \ref{thm:secondorderxdep}. In \cite{GV} $A_p$-weights in time, in combination with Rubio de Francia extrapolation techniques, have been used to derive the case $p\neq q$ from $p=q$, in the case of continuous coefficients in time. This was later extended to VMO coefficients in space in \cite{DKnew}. The extrapolation technique would be applicable here as well, but it only gives regularity in $L^p(0,T,L^q(\Omega;L^q(\R^d;\C^N)))$ and not in $L^p((0,T)\times\Omega;L^q(\R^d;\C^N)))$ as one would like.
\item Another natural question is whether Theorem \ref{thm:secondorderxdep} holds if the coefficients are only VMO in the space variable. Some results in this direction have been found for equations in divergence form in \cite{Kry09VMO}.
\end{enumerate}
\end{remark}

\begin{remark}
Let us motivate that in the commuting case the assumption that the operators $b_n$ as defined below \eqref{eq:stochsystemsec} is not far from the general case. Indeed, assume that $B_1, \ldots, B_J$, are differential operators of order one which generate commuting groups on $L^q(\R^d;\C^{N\times N})$ for all $q\in (1, \infty)$. Then  we can write $B_j u =  \sum_{k=1}^d M_{jk} \partial_k u  + N_j u$, where $M_{jk}, N_j$ are $N\times N$ matrices. Then the $M_{jk}$ have real eigenvalues since otherwise the Fourier symbol would be unbounded (use \cite[Theorem 2.5.16]{Grafakos1} to reduce to one fixed direction $k$). Now by \cite[Theorem 0.1]{Brenner73} the matrices $(M_{jk})_{k=1}^d$ commute and are diagonalizable.
Moreover, if the groups $(e^{\cdot B_j})_{j=1}^J$ are commuting, then the operators $(M_{jk})_{j=1,k=1}^{J,d}$ are commuting as well. Therefore, a standard result from linear algebra implies that $(M_{jk})_{j=1,k=1}^{J,d}$ are simultaneously diagonalizable. Hence by a coordinate transformation we could have assumed that $M_{jk}$ are diagonal matrices with real entries.

Of course to reduce to this setting in a general set-up the coordinate transformations would become $(t,\omega)$-dependent and then the reduction breaks down. Even in the $(t,\omega)$-independent case the coefficients of $A$ change after a coordinate transformation, and more importantly
the ellipticity conditions changes (unless all matrices $M_{jk}$ are hermitian, in which case the transformation is orthogonal).
On the other hand, if one does not assume the $B_j$'s generate commuting groups, then the above fails, and one needs to consider the case of general matrices. In general this leads to a $p$-dependent stochastic parabolicity condition. See \cite{DuLiuZhang,KimLeesystems} for results in this direction.
\end{remark}

\section{Divergence form equations of second order with measurable coefficients}
\label{sec:tent}
In this section, we consider the problem:
\begin{equation*}
\left\{
  \begin{array}{ll}
  dU(t) +L(t) U(t)\ud t & = f(t)\ud t +  g(t) \ud W_{H}(t), \quad \forall t>0\\
    U(0) & = 0,
  \end{array}
\right.
\end{equation*}
where $L(t) = - div\, a(t,.) \nabla$, and
$a \in L^{\infty}(\Omega \times\R_{+}; L^{\infty}(\R^{d}; \mathcal{L}(\C^{d})))$ is progressively measurable and satisfies the uniform ellipticity condition:
$$
Re \langle a(\omega, t,x)\xi,\overline{\xi} \rangle \geq C |\xi|^{2} \quad \forall \xi \in \C^{d},
$$
for almost every $(\omega, t,x) \in \Omega \times \R_{+} \times \R^{d}$, and $f,g$ belong to appropriate tent spaces defined below.\\
The idea of using tent spaces as solution spaces for stochastic PDE goes back to \cite{anp}. It can be seen as part of the trend to use harmonic analysis ``beyond Calder\'on-Zygmund theory" in PDE problems with rough coefficients (see e.g. \cite{hkmp,aa} and the references therein). The results given here demonstrate how tent spaces can be used to treat problems with $L^{\infty}(\R_{+}\times \R^{d})$ coefficients, extending the time-independent results of \cite{anp}. They include a deterministic result (under no extra assumption on the coefficients) that can be seen as the first (to the best of our knowledge) extension of Lions's maximal regularity result from \cite{L} to a non Hilbertian setting. For the stochastic problem, we impose that
$$
\|L(t)u\|_{L^{2}(\R^{d})} \lesssim \|u\|_{W^{2,2}(\R^{d})} \quad
\forall u \in W^{2,2}(\R^{d}) \quad \forall t\geq 0.
$$
This holds when $a$ is divergence free, i.e. when $\sum \limits _{i=1} ^{d} a_{i,j}\partial_{i} = 0$ (almost surely) in the sense of distributions for all $j=1,..,d$. See Remark \ref{rk:divfree} for a discussion of this condition.
\\

We plan to develop the theory presented in this section in future work. We thus only include here the simplest situation that showcases how the method of proof used in this paper (particularly in Theorem \ref{thm:timedepSMR}), as well as the idea of using the time weights $w_{\alpha}$ to vary regularity, can be combined with the tent space approach of \cite{anp}.
For this reason, we choose to take zero initial data (although we could add data in appropriate fractional domains, see Remark \ref{rk:init} below), and keep the equation linear (although semilinearities could be treated through fixed point arguments).\\

Our starting point is the $L^2$ theory of J.L. Lions (see \cite{Par2}, \cite{KR79}, \cite{LiuRock}), where solution spaces are the energy spaces $L^{2}(\R_{+};W^{1,2}(\R^{d}))$, forcing terms are taken in $L^{2}(\R_{+};W^{-1,2}(\R^{d}))$, and data are in $L^{2}(\R^{d})$. Lions's theory includes the existence of an evolution family $\{\Gamma(t,s) \;;\; t>s\}$ (see e.g. \cite[Chap.\,XVIII]{dl}).
In the deterministic setting, Lions's theory has been extended in \cite{AMP15} to allow data in $L^{p}(\R^{d})$.
The appropriate solution spaces then turn out to be tent spaces $T^{p,2} _{0}$, in the sense that there is a norm equivalence
$$
\|u_{0}\|_{L^{p}(\R^{d})} \eqsim \|(t,x)\mapsto \sqrt{t}\nabla\Gamma(t,0)u_{0}(x)\|_{T^{p,2}_{0}};
$$
see \cite[Corollary 7.5]{AMP15}. Let us recall the definition of these tent spaces, and more generally of their Sobolev counterparts.

\begin{definition}
Let $p \in [1,\infty)$ and $\sigma\geq 0$. Let $K$ be a Hilbert space.
The tent space $T^{p,2} _{\sigma}(K)$ is defined as the completion of $C^{\infty} _{c} (\R_{+} \times \R^{d};K)$ with respect to the norm
$$
\|g\|_{T_{\sigma }^{p,2}(K)} := \Big( \int  _{\R^{d}}
\Big(\int  _{0} ^{\infty} \fint  _{B(x,t^\frac12)}\|g(y,t)\|_{K}^{2} \,dy\,
\frac{dt}{t^{1+\sigma}} \Big)^{\frac{p}{2}} \,dx \Big)^{\frac{1}{p}}.
$$
When $K=\C$, we just write $T^{p,2} _{\sigma}$ instead of $T^{p,2} _{\sigma}(\C)$.
\end{definition}
Note that, by Fubini's theorem, $T^{2,2,\sigma} = L^{2}(\R_{+}\times \R^{d}, \frac{dt dx}{t^{1+\sigma}})$.

As proven in \cite{a-angle}, the aperture can be changed in the definition of $T_{\sigma }^{p,2}$ in the following way. There exists $C>0$ such that for $\alpha\geq 1$ and all $g \in T^{p,2} _{\sigma}(K)$:
\begin{equation}
\label{eq:change}
\Big( \int  _{\R^{n}}
\Big(\int  _{0} ^{\infty} \fint  _{B(x,\alpha t^\frac12)}\|g(y,t)\|_{K}^{2} \,dy\,
\frac{dt}{t^{1+\sigma}} \Big)^{\frac{p}{2}} \,dx \Big)^{\frac{1}{p}}
\leq C \alpha^{\frac{d}{\min(p,2)}} \|g\|_{T^{p,2}_{\sigma}(K)}.
\end{equation}
We think of $T^{p,2}_{\sigma}$ has being to $T^{p,2}_{0}$ what $W^{\sigma,p}$ is to $L^p$. This is motivated by the following classical fact from Littlewood-Paley theory: for all $p>1$ and all $u_{0} \in L^{p}(\R^{d})$, we have that
$$
\|u_{0}\|_{L^{p}(\R^{d})} \eqsim \|(t,x)\mapsto (-t\Delta)^{\frac{1}{2}} \exp(t\Delta)u_{0}(x)\|_{T^{p,2} _{0}}.
$$
This follows from the Hardy space theory introduced in \cite{fs} and interpolation.
Therefore, for $\sigma \geq 0$ and $f \in H^{\sigma,p}$, we have that
\begin{align*}
\|(-\Delta)^{\frac{\sigma}{2}} f\|_{L^{p}(\R^{d})}
&\eqsim \|(t,x) \mapsto (-t\Delta)^{\frac{1}{2}} \exp(t\Delta)(-\Delta)^{\sigma}f(x)\|_{T^{p,2}_{0}}\\
 &= \|(t,x) \mapsto (-t\Delta)^{\frac{1+\sigma}{2}} \exp(t\Delta)f(x)\|_{T^{p,2}_{\sigma}}
\\ & \eqsim \|(t,x) \mapsto (-t\Delta)^{\frac{1}{2}} \exp(t\Delta)f(x)\|_{T^{p,2}_{\sigma}},
\end{align*}
where the last equivalence comes from a change of square function argument involving Schur's Lemma (see e.g. \cite[Theorem 7.10]{hnp}). Note that, in the time dependent setting, \cite[Corollary 7.5]{AMP15} gives that there exists $p_{c}<2$ such that for all $p>p_{c}$,
$$
\|f\|_{L^{p}(\R^{d})} \eqsim \|(t,x)\mapsto \sqrt{t}\nabla \Gamma(t,0)f(x)\|_{T^{p,2}_{0}}.
$$
At this stage, we do not know if, more generally for $\sigma \geq 0$,
$$\|(-\Delta)^{\frac{\sigma}{2}} f\|_{L^{p}(\R^{d})}  \eqsim \|(t,x)\mapsto \sqrt{t}\nabla \Gamma(t,0)f(x)\|_{T^{p,2}_{\sigma}}.$$

In this section we thus use well chosen combinations of powers of $div \, a \nabla$ and the parameter $\sigma$ to measure regularity.
This is in the spirit of Amenta-Auscher's tent space approach to elliptic boundary value problems with fractional regularity \cite{aa} (where the link between $\sigma$ and powers of the relevant Dirac operator is completely understood).

Our main result is the following theorem, proven at the end of the section.
\begin{theorem}
\label{thm:tent}
Let  $\sigma \geq 0$ and $p>\min(1,\frac{2d}{d+2\sigma+2})$.
Let $f \in L^{p}(\Omega;T^{p,2}_{\sigma})$, and $g \in L^{p}(\Omega;T^{p,2}_{\sigma+1}(H))$ be an adapted process such that
$\nabla g \in L^{p}(\Omega;T^{p,2}_{\sigma}(H^{d}))$.
Under Assumption \eqref{ass}, we have that the solution process defined by
$$U(t,.) = \int \limits _{0} ^{t}
\Gamma(t,s)f(s)ds + \int \limits _{0} ^{t}
(\Gamma(t,s)\otimes I_{H})g(s)dW_{H}(s) \quad \forall t>0,$$
satisfies
$$
\mathbb{E}\|U\|_{T^{p,2}_{\sigma+2}}^{p}
\lesssim \E\|f\|_{T^{p,2}_{\sigma}} ^{p}
+\E\|g\|_{T^{p,2}_{\sigma+1}(H)} ^{p} + \E\|\nabla g\|_{T^{p,2}_{\sigma}(H^{d})} ^{p}.
$$
\end{theorem}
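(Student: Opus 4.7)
The plan is to adapt the splitting strategy of Theorem \ref{thm:timedepSMR} to the tent space setting by introducing $-\Delta$ as a reference operator. Define the stochastic convolution
\begin{equation*}
V_1(t) := \int_0^t (e^{(t-s)\Delta}\otimes I_H) g(s)\, dW_H(s),
\end{equation*}
so that $V_1$ solves $dV_1 - \Delta V_1\,dt = g\,dW_H$ with $V_1(0)=0$. Setting $V_2 := U - V_1$ and using the identity $L(t) - (-\Delta) = -\mathrm{div}\,((a(t,\cdot)-I)\nabla)$, we see that pathwise $V_2$ solves the deterministic evolution equation
\begin{equation*}
\partial_t V_2 + L(t) V_2 = f + \mathrm{div}\bigl((I-a(t,\cdot))\nabla V_1\bigr), \quad V_2(0) = 0.
\end{equation*}
It then suffices to estimate $V_1$ and $V_2$ separately in the tent space norm $T^{p,2}_{\sigma+2}$.

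For $V_1$, the key input is a tent space stochastic maximal regularity estimate for the heat semigroup, in the spirit of \cite{anp}, promoted to the gradient level by using that $\nabla$ commutes with $e^{t\Delta}$: indeed $\nabla V_1$ is the stochastic convolution of $\nabla g$ against the heat semigroup. The scaling between $\sigma$-indices is transferred by combining the bounded $H^\infty$-calculus of $-\Delta$, the Littlewood-Paley style square function characterizations of tent spaces, and the change-of-aperture estimate \eqref{eq:change}. The target estimate is
\begin{equation*}
\E\|V_1\|^p_{T^{p,2}_{\sigma+2}} + \E\|\nabla V_1\|^p_{T^{p,2}_{\sigma+1}(H^d)} \lesssim \E\|g\|^p_{T^{p,2}_{\sigma+1}(H)} + \E\|\nabla g\|^p_{T^{p,2}_\sigma(H^d)},
\end{equation*}
valid in the stated range of $p$, which corresponds to the range where tent space atomic/molecular decompositions are compatible with the Gaussian bounds of the heat kernel.

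For $V_2$, we invoke the deterministic Lions-type tent space maximal regularity for divergence form operators with $L^\infty(\R_+\times \R^d)$ coefficients from \cite{AMP15}, applied to a forcing that splits as a $T^{p,2}_\sigma$ term ($f$) plus the divergence of a $T^{p,2}_{\sigma+1}(H^d)$ field ($(I-a)\nabla V_1$). Since $a\in L^\infty$, one has $\|(I-a)\nabla V_1\|_{T^{p,2}_{\sigma+1}(H^d)} \lesssim \|\nabla V_1\|_{T^{p,2}_{\sigma+1}(H^d)}$, yielding pathwise
\begin{equation*}
\|V_2\|_{T^{p,2}_{\sigma+2}} \lesssim \|f\|_{T^{p,2}_\sigma} + \|\nabla V_1\|_{T^{p,2}_{\sigma+1}(H^d)}.
\end{equation*}
Raising to the $p$th power, taking expectation, and combining with the $V_1$ estimate gives the theorem. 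The main obstacle will be rigorously establishing the stochastic estimate for $V_1$: while $\sigma=0$ is essentially covered by \cite{anp}, extending to general $\sigma\geq 0$ with the matching gradient-level control requires carefully interfacing $H^\infty$-calculus, tent space Littlewood-Paley theory, and It\^o's isomorphism at $p=2$ on the Hilbert space $T^{2,2}_\sigma = L^2(\R_+\times \R^d, t^{-1-\sigma}\,dt\,dx)$, followed by extrapolation in $p$ via tent space molecular decompositions. The divergence-free hypothesis on $a$ enters precisely to guarantee $L(t):W^{2,2}\to L^2$, so that the deterministic tent space theory of \cite{AMP15} has enough regularity room to absorb the additional stochastic forcing generated by the comparison with $-\Delta$.
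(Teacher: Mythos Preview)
Your decomposition $U=V_1+V_2$ with $V_1$ the heat stochastic convolution matches the paper, but the way you propose to estimate $V_2$ contains a genuine gap.

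You treat the correction term as a divergence, $\operatorname{div}\bigl((I-a)\nabla V_1\bigr)$, and then claim a deterministic tent space estimate of the form
\[
\|V_2\|_{T^{p,2}_{\sigma+2}} \lesssim \|f\|_{T^{p,2}_{\sigma}} + \|(I-a)\nabla V_1\|_{T^{p,2}_{\sigma+1}}.
\]
No such ``divergence-form forcing'' estimate for the evolution family $\Gamma(t,s)$ is available. It would require uniform $L^2$ bounds on $(t-s)^{1/2}\Gamma(t,s)\operatorname{div}$ (equivalently on $(t-s)^{1/2}\nabla\Gamma(t,s)$), and the paper states explicitly in Remark~\ref{rk:maximal?}(2) that this is open for rough time-dependent coefficients and is deferred to future work. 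The only deterministic tent space maximal regularity proven here (Theorem~\ref{thm:nonaut-det-tent}) is $\mathcal{M}:T^{p,2}_{\sigma}\to T^{p,2}_{\sigma+2}$ for forcings in $T^{p,2}_{\sigma}$, not for forcings of divergence type.

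Relatedly, you misplace the role of Assumption~\ref{ass}. The deterministic result for $\Gamma$ does not use it at all; it relies only on uniform $L^2$ boundedness and off-diagonal decay of $\Gamma(t,s)$. The paper instead uses Assumption~\ref{ass} to put the correction term directly into $T^{p,2}_{\sigma}$, not $T^{p,2}_{\sigma+1}$: one writes $(L(s)+\Delta)V_1$ and estimates $\|\Delta V_1\|_{T^{p,2}_{\sigma}}$ via \eqref{eq:anp}, while for $\|L(\cdot)V_1\|_{T^{p,2}_{\sigma}}$ one factors
\[
L(t)V_1(t)=tL(t)(I-t\Delta)^{-1}\bigl(t^{-1}V_1(t)-\Delta V_1(t)\bigr),
\]
and uses that Assumption~\ref{ass} implies $\{tL(t)(I-t\Delta)^{-1}\}_{t>0}$ has $L^2$--$L^2$ off-diagonal decay of all orders (Lemma~\ref{lem:od}). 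This gives $\|L(\cdot)V_1\|_{T^{p,2}_{\sigma}}\lesssim \|V_1\|_{T^{p,2}_{\sigma+2}}+\|\Delta V_1\|_{T^{p,2}_{\sigma}}$, and both terms are then controlled by the $T^{p,2}$ extrapolation results (Propositions~\ref{prop:det-extra} and~\ref{prop:stoch-extra}) together with \eqref{eq:anp}. That is where the structural hypothesis on $a$ is genuinely needed.
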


\begin{remark}
\label{rk:maximal?}
(1) To compare such tent space estimates to $L^{p}(L^q)$ regularity, one should note that, by \cite[Proposition 2.1]{ahm}, we have that, for all $\sigma \geq 0$ and all $p \geq 2$, there exists $C>0$ such that for all $F \in L^{p}(\R^{d};L^{2}(\R_{+},\frac{dt}{t^{1+\sigma}}))$:
$$
\|F\|_{T^{p,2}_{\sigma}} \leq C \|F\|_{L^{p}(\R^{d};L^{2}(\R_{+},\frac{dt}{t^{1+\sigma}}))}.
$$
The reverse inequality holds for $p \leq 2$.
This means that for $p \geq 2$ the tent space theory allows more forcing terms than its $L^{p}(L^2)$ analogue. For $p<2$, the class of forcing terms $T^{p,2}$ is smaller than its $L^{p}(L^{2})$ counterpart, which it has to be, given that maximal regularity in $L^{p}(L^{2})$ does not hold for $p<2$ (see \cite{Kry}).\\
(2) On the other hand, we estimate $\|U\|_{T^{p,2}_{\sigma+2}}$ in terms of $\|\nabla g\|_{T^{p,2}_{\sigma}(H^{d})}$. A more natural generalisation of Lions's maximal regularity result would be to estimate $\|\nabla U\|_{T^{p,2}_{\sigma}(H^{d})}$ in terms of $\|g\|_{T^{p,2}_{\sigma}}$ or $\|U\|_{T^{p,2}_{\sigma}}$ in terms of  $\|G\|_{T^{p,2}_{\sigma}(H^{d})}$ for $g =div G$. At this stage, we would need to know that $\{(t-s)^{\frac{1}{2}}\nabla \Gamma(t,s) \;;\; t>s\}$ is uniformly bounded in $\mathcal{L}(L^{2}(\R^{d}),L^{2}(\R^{d};\C^{d}))$ to study such maximal regularity. The fact that we only know uniform boundedness of $\{\Gamma(t,s) \;;\; t>s\}$ is what led us to the notion of maximal regularity used here. Note, however, that this uniform boundedness of the evolution family is, in Lions's theory, a consequence of the energy estimates that also give the $L^{2}(W^{-1,2})-L^{2}(W^{1,2})$ maximal regularity. So, while
$L^{2}(\frac{dt}{t^{\sigma}};L^2)-L^{2}(\frac{dt}{t^{2+\sigma}};L^2)$ maximal regularity is trivial in the time independent case, it is not so in the time dependent case, where generation of a bounded evolution family is not a substantially easier question than maximal regularity. Nevertheless, we plan to return to the question of estimating $\|\nabla U\|_{T^{p,2}_{\sigma}(H^{d})}$ in future work.
\end{remark}

To prove Theorem \ref{thm:tent}, we proceed as in Theorem \ref{thm:timedepSMR}, and decompose the problem into a time-independent stochastic part, and a time-dependent deterministic part. Our key technical tool
to estimate both parts is extrapolation in tent spaces, as developed in \cite{akmp} and \cite{anp}. In particular, we need some simple variations of \cite[Proposition 5.1]{anp}, proven below (we include the details for the convenience of the reader).
These results make extensive use of the notion of $L^2 - L^2$ off-diagonal decay.

\begin{definition}
A family of bounded linear operators $\{K(t,s) \;;\; t>s\} \subset \mathcal{L}(L^{2}(\R^{d}))$ is said to have $L^2 - L^2$ off-diagonal decay of any order if, for each $m \in \N$, there exists $C_{m}>0$ such that, for every Borel sets $E,F \subset \R^{d}$, every $u \in L^{2}(\R^{d})$, every $t>s$, we have that
$$
\|1_{E} K(t,s)(1_{F}u)\|_{2} \leq C_{m}\big(1+\frac{d(E,F)^{2}}{t-s}\big)^{-m} \|1_{F}u\|_{2},
$$
where $d(E,F) = \inf\{d(x,y) \;;\; x \in E, \, y \in F\}$.
A one parameter family $\{K(t) \;;\; t>0\} \subset \mathcal{L}(L^{2}(\R^{d}))$ is said to have $L^2 - L^2$ off-diagonal decay of any order if the two parameter family $\{K(t-s) \;;\; t>s\}$ does.
\end{definition}

\begin{proposition}
\label{prop:det-extra}
Let $\{K(t,s) \;;\; t>s\} \subset \mathcal{L}(L^{2}(\R^{d}))$ be uniformly bounded with $L^2 - L^2$ off-diagonal decay of any order. Assume that
$$\mathcal{M}_{K} f(t,.) = \int \limits _{0} ^{t} K(t,s)f(s)ds,$$
defines a bounded linear operator from $T^{2,2}_{\sigma}$
to $T^{2,2}_{\sigma +2}$ for all $\sigma \geq 0$. Then, for all $\sigma \geq 0$ and all $p>\min(1,\frac{2d}{d+2\sigma+4})$, $\mathcal{M}_{K}$ extends to a bounded linear operator from $T^{p,2}_{\sigma}$
to $T^{p,2}_{\sigma +2}$.
\end{proposition}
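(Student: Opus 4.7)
The plan is to adapt the extrapolation argument of \cite[Proposition 5.1]{anp}, which in turn follows the atomic/molecular philosophy of \cite{akmp}, and split the range of $p$ into three parts according to its position relative to $2$.

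First I would dispose of the small-$p$ case, which is the heart of the matter. Recall that $T^{p,2}_\sigma$ admits an atomic decomposition for $p\leq 1$: every $f\in T^{p,2}_\sigma$ can be written as $f=\sum_k \lambda_k a_k$, with $\sum_k|\lambda_k|^p\lesssim \|f\|_{T^{p,2}_\sigma}^p$ and each $a_k$ a $T^{p,2}_\sigma$-atom supported in the tent $\widehat{B}_k$ over a ball $B_k$ of some radius $r_k$, normalized by $\|a_k\|_{T^{2,2}_\sigma}\leq |B_k|^{1/2-1/p}$. Fixing such an atom $a$ with $B=B(x_0,r)$, I would write $\mathcal{M}_K a = \sum_{j\geq 0} m_j$ with $m_j := \one_{C_j(B)}\mathcal{M}_K a$, where $C_0=4B$ and $C_j=2^{j+2}B\setminus 2^{j+1}B$ for $j\geq 1$, and show that each $m_j$ is, up to a factor $2^{-jN}$, a $T^{p,2}_{\sigma+2}$-atom over $2^{j+2}B$. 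The local piece $m_0$ is controlled by the assumed $T^{2,2}_\sigma\to T^{2,2}_{\sigma+2}$ boundedness of $\mathcal{M}_K$ and H\"older's inequality in the tent variables (parabolic scaling $t\sim r^2$). For $j\geq 1$, since the atom $a(\cdot,s)$ is supported in $B$ for $s\lesssim r^2$, the $L^2$-off-diagonal decay of $\{K(t,s)\}$ of order $m$ gives, after splitting the $s$-integral dyadically,
\begin{equation*}
\|\one_{C_j(B)}\mathcal{M}_K a(\cdot,t)\|_{L^2(\R^d)}\lesssim_m \Bigl(1+\frac{(2^j r)^2}{t}\Bigr)^{-m}\int_0^t \|a(\cdot,s)\|_{L^2(\R^d)}\,ds,
\end{equation*}
which after integration against $t^{-1-\sigma-2}\,dt$ produces an arbitrarily fast decay $2^{-jN}$ (by choosing $m$ large) in the $T^{2,2}_{\sigma+2}$ norm; this is exactly the molecular estimate needed. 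Summing over $j$ and using $\sum_j 2^{-jNp}2^{jd(1-p/2)}<\infty$ when $N>d(\tfrac{1}{p}-\tfrac{1}{2})$, which we can afford precisely when $p>\tfrac{2d}{d+2\sigma+4}$ (after absorbing the gain coming from the extra two powers of $t$ present in $T^{p,2}_{\sigma+2}$ compared to $T^{p,2}_\sigma$), yields the bound on atoms, hence on $\mathcal{M}_K$.

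For $p\in(1,2]$ I would interpolate between the case $p=2$ (hypothesis) and a Hardy-space $T^{1,2}$ (or any $T^{p_0,2}$ with $p_0$ below the threshold) endpoint obtained in the previous step, invoking the complex/real interpolation identity $[T^{p_0,2}_\sigma,T^{2,2}_\sigma]_\theta=T^{p,2}_\sigma$ from \cite{CMS}. For $p>2$, I would dualise: the formal adjoint $\mathcal{M}_K^*$ is the backwards Volterra operator with kernel $K(t,s)^*$, which is uniformly $L^2$-bounded and satisfies the same off-diagonal decay. Applying the previous cases to $\mathcal{M}_K^*$ in the range $p'<2$ and using $(T^{p,2}_{\sigma+2})^*\simeq T^{p',2}_{-\sigma-2}$ with an appropriate duality pairing (again from \cite{CMS}) yields the required estimate.

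The main technical obstacle will be Step 1: tracking how the $L^2-L^2$ off-diagonal decay of $\{K(t,s)\}$ interacts with the parabolic tent scaling and the extra weight $t^{-2}\,dt$ present in $T^{p,2}_{\sigma+2}$ versus $T^{p,2}_\sigma$. Schur-type estimates for the $s$-integral, combined with the change-of-aperture inequality \eqref{eq:change} whenever an enlargement of the ball $B$ is needed, should make this bookkeeping manageable, and will produce the explicit threshold $p>\min(1,\tfrac{2d}{d+2\sigma+4})$.
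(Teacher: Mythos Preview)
Your route differs substantially from the paper's, which does not split into ranges of $p$ at all: the paper fixes $x\in\R^d$, decomposes the $s$-integral dyadically in time ($s\in[2^{-k-1}t,2^{-k}t]$) and localises $f(s,\cdot)$ in annuli $C_j(x,4t^{1/2})$, applies the off-diagonal bounds, and then sums using the change-of-aperture inequality \eqref{eq:change}. That inequality carries the factor $\alpha^{d/\min(p,2)}$, so the whole estimate goes through uniformly in $p$ above the threshold, and the case $p>2$ needs no duality whatsoever.

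On your atomic step: the sentence ``produces an arbitrarily fast decay $2^{-jN}$ (by choosing $m$ large)'' is wrong. If you integrate $(1+(2^jr)^2/t)^{-2m}\,t^{-3-\sigma}\,dt$ the $m$-dependence cancels and you get exactly $(2^jr)^{-4-2\sigma}$; the molecular decay in $j$ is $2^{-j(2+\sigma)}$ regardless of $m$. This is precisely why the threshold is $p>\tfrac{2d}{d+2\sigma+4}$, so your final inequality is correct, but the reasoning should be fixed.

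The real gap is the duality step for $p>2$. The adjoint of $\mathcal{M}_K:T^{p,2}_\sigma\to T^{p,2}_{\sigma+2}$ with respect to any natural pairing is the backward operator
\[
g\longmapsto s^{1+\sigma}\int_s^\infty K(t,s)^*g(t)\,\frac{dt}{t^{3+\sigma}},
\]
which is \emph{not} of the form treated by the proposition: it is backward in time, carries the extra weight $s^{1+\sigma}/t^{3+\sigma}$, and there is no hypothesis giving it $T^{2,2}_{\sigma'}\to T^{2,2}_{\sigma'+2}$ boundedness. So ``applying the previous cases to $\mathcal{M}_K^*$'' does not go through, and you would have to prove the $p>2$ case by a separate direct argument---at which point you are essentially doing the paper's proof. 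The cleanest fix is to abandon the case split and use the change-of-aperture argument directly; for $p\geq2$ the aperture factor is $\alpha^{d/2}$, and both the $k$-sum and the $j$-sum converge without further restriction.
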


\begin{proof}
We introduce the sets
$$
C_j(x,t) = \left\{
\begin{array}{ll}
  B(x,t) &  \ j=0 \\
   B(x,2^jt)\setminus B(x,2^{j-1}t) &  \ j=1,2,\dots
\end{array}
\right.
$$
Let $t>0$ and $f \in T^{2,2} _{\sigma} \cap T^{p,2} _{\sigma}$.
\begin{equation*}
\begin{aligned}
\ & \Big\| (t,x)\mapsto \int  _{0} ^t K(t,s)f(s,\cdot)(x)\,ds\Big\|_{T^{p,2}_{\sigma+2}} \leq \sum_{j=0}^\infty \sum_{k=1}^\infty I_{j,k} +  \sum_{j=0}^\infty J_j,
\end{aligned}
\end{equation*}
where
$$ I_{j,k} ^{p} = 2^{-\frac{kp}{2}}\int_{\R^n} \!\Big(
 \int_0^\infty\!\! \fint_{B(x,t^\frac12)} \int^{2^{-k}t}_{2^{-k-1}t}
| K(t,s) [\one_{C_j(x,4t^\frac12)} f(s,\cdot)](y)|^2\,\,ds\,dy\frac{dt}{t^{\sigma+2}}
\Big)^\frac{p}{2}dx
$$
and
$$ J_j ^{p} = \int_{\R^n} \!\Big(
 \int_0^\infty\!\! \fint_{B(x,t^\frac12)} \Big(\int^t_{\frac{t}{2}}
K(t,s) [\one_{C_j(x,4s^\frac12)} f(s,\cdot)](y)ds \Big)^2\,\,dy\frac{dt}{t^{\sigma+3}}
\Big)^\frac{p}{2}dx.
$$
Let us start with the estimate for $I_{j,k}$ for $j\ge 0$ and $k\ge 1$. Using the off-diagonal decay, we have the following, for all $x\in \R^{d}$:
\begin{align*}
\ &  \int_0^\infty \!\!\!\fint_{B(x,t^\frac12)} \int^{2^{-k}t}_{2^{-k-1}t}
|K(t,s) [\one_{C_j(x,4t^\frac12)} f(s,\cdot)](y)|^2\,\,ds\,dy\frac{dt}{t^{\sigma+2}}
\\ & \lesssim  \int_0^\infty\!\!\! \int^{2^{-k}t}_{2^{-k-1}t}
\big\n \one_{B(x,t^\frac12)} K(t,s)
[\one_{C_j(x,4t^\frac12)} f(s,\cdot)]\big\n_{L^2}^2\,\,ds\frac{dt}{t^{\frac{d}{2}+\sigma+2}}
\\ & \lesssim   \int_0^\infty\!\!\! \int^{2^{-k}t}_{2^{-k-1}t}
(\frac{4^{j}t}{t-s})^{-d}\big\n
\one_{C_j(x,4t^\frac12)} f(s,\cdot)\big\n_{L^2}^2\,\,ds\frac{dt}{t^{\frac{d}{2}+\sigma+2}}
\\ & \lesssim 4^{-jd} \int_0^\infty\Big( \int^{2^{k+1}s}_{2^k s} \frac{dt}{t^{\frac{d}{2}+\sigma+2}}\Big)
\big\n \one_{B(x,2^{j+\frac{k}{2}+3}s^\frac12)} f(s,\cdot)\big\n_{L^2}^2\,ds
\\ & \lesssim 4^{-jd}2^{-k(\frac{d}{2}+\sigma+1)} \int_0^\infty
\big\n \one_{B(x,2^{j+\frac{k}{2}+3}s^\frac12)} f(s,\cdot)\big\n_{L^2}^2\,\frac{ds}{s^{\frac{d}{2}+\sigma+1}}.
\end{align*}
We then use the change of aperture formula in tent spaces \eqref{eq:change} to get that
\begin{align*}
I_{j,k}
\lesssim 2^{-jd}2^{-\frac12k(\frac{d}{2}+\sigma+2)}2^{(j+\frac{k}{2}+3)\frac{d}{p\wedge 2}}
\|f\|_{T^{p,2}_{\sigma}}.
\end{align*}
The sum  $\sum_{j,k} I_{j,k}$ thus converges since we assumed that $p> \frac{2d}{d+2\sigma+4}$.

Turning to $J_0$, and using the fact the $M$ is bounded from $T^{2,2}_{\sigma+\frac{d}{2}}$ to $T^{2,2}_{\sigma +2+\frac{d}{2}}$, we have that
\begin{align*}
\ &  \int_0^\infty \!\!\!\fint_{B(x,t^{\frac{1}{2}})} |\int_{\frac{t}{2}}^{t} K(t,s) [\one_{B(x,4s^\frac12)} f(s,\cdot)](y)ds|^{2}\,dy\,\frac{dt}{t^{\sigma+3}}
\\ & \lesssim \int_0^\infty \!\!\!\fint_{B(x,t^{\frac{1}{2}})} |\int_{0}^{t} K(t,s) [\one_{B(x,4s^\frac12)} f(s,\cdot)](y)ds|^{2}\,dy\,\frac{dt}{t^{\sigma+3}}
+ \sum \limits _{k=0} ^{\infty} I_{0,k}
\\ & \lesssim \|\mathcal{M}_{K} \left((s,y) \mapsto \one_{B(x,4s^\frac12)}(y) f(s,y) \right)\|_{T^{2,2}_{\sigma+2+\frac{d}{2}}}+ \sum \limits _{k=0} ^{\infty} I_{0,k}
\\ & \lesssim \int_0^\infty \!\!\!\fint_{B(x,s^{\frac{1}{2}})}
|f(s,y)|^{2} dy\,\frac{ds}{s^{\sigma+1}}+ \sum \limits _{k=0} ^{\infty} I_{0,k}.
\end{align*}
It remains to estimate $J_j$ for $j\ge 1$. We have
\begin{align*}
\ &  \int_0^\infty \!\!\! \fint_{B(x,t^\frac12)} |\int^{t}_{\frac{t}{2}} K(t,s) [\one_{C_j(x,4s^\frac12)} f(s,\cdot)](y)ds|^{2}\,dy\frac{dt}{t^{\sigma+3}}
\\& \lesssim \int_0^\infty \!\!\! \fint_{B(x,t^\frac12)} \int^{t}_{\frac{t}{2}} |K(t,s) [\one_{C_j(x,4s^\frac12)} f(s,\cdot)](y)|^{2} \,ds\,dy\frac{dt}{t^{\sigma+2}}
\\ & \lesssim
\int_0^\infty\!\!\!  \int^{t}_{\frac{t}{2}} (\frac{4^{j}s}{t-s})^{-d}
\n \one_{B(x,2^{j+2}s^\frac12)} f(s,\cdot)\n_{L^2}^2\,\,ds\,\frac{dt}{t^{\frac{d}{2}+\sigma+2}}
\\ & \le 4^{-jd}
\int_0^\infty \!\!\int^{t}_{\frac{t}{2}}
\n \one_{B(x,2^{j+2}s^\frac12)} f(s,\cdot)\n_{L^2}^2\,\frac{ds}{s^{\frac{d}{2}+\sigma+2}}\,dt
\\ & =4^{-jd}
\int_0^\infty \!\!\Big(\int^{2s}_{s}\,dt\Big)
\n \one_{B(x,2^{j+2}s^\frac12)} f(s,\cdot)\n_{L^2}^2\,\frac{ds}{s^{\frac{d}{2}+\sigma+2}}
\\ & \lesssim
4^{-jd} \int_0^\infty \!\!
\n \one_{B(x,2^{j+2}s^\frac12)} f(s,\cdot)\n_{L^2}^2\,\frac{ds}{s^{\frac{d}{2}+\sigma+1}}.
\end{align*}

Using the change of aperture formula \eqref{eq:change} one more time, we have that
\begin{align*}
J_{j}
& \lesssim 2^{-jd} 2^{(j+2)\frac{d}{p\wedge 2}}\n f\n_{T_{\sigma}^{p,2}},
\end{align*}
which concludes the proof.
\end{proof}

In a very similar way, we have the following stochastic version:

\begin{proposition}
\label{prop:stoch-extra}
Let $\{K(t,s) \;;\; t>s\}$ be a two-parameter progressively measurable process with values in $\mathcal{L}(L^{2}(\R^{d}))$. Assume that, almost surely, $\{K(t,s) \;;\; t>s\}$ is uniformly bounded with $L^2 - L^2$ off-diagonal decay of any order. Assume further that
$$\mathcal{S}_{K} g(t,.) = \int \limits _{0} ^{t} (K(t,s)\otimes I_{H})g(s)dW_{H}(s),$$
defines a bounded linear operator from $L^{2}(\Omega;T^{2,2}_{\sigma+1}(H))$
to $L^{2}(\Omega;T^{2,2}_{\sigma+2})$ for all $\sigma \geq 0$. Then, for all $\sigma \geq 0$ and $p>\min(1,\frac{2d}{d+2\sigma+2})$, $\mathcal{S}_{K}$ extends to a bounded linear operator from $L^{p}(\Omega;T^{p,2}_{\sigma+1}(H))$
to $L^{p}(\Omega;T^{p,2}_{\sigma+2})$.
\end{proposition}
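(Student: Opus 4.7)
My plan is to follow the blueprint of Proposition \ref{prop:det-extra} (the deterministic case), substituting It\^o's isometry applied on the Hilbert space $L^2(B(x, t^{1/2}))$ in place of the Cauchy-Schwarz estimate $|\int_a^b K(t,s)f(s)\,ds|^2 \leq |b-a| \int_a^b |K(t,s)f(s)|^2 \,ds$ used in the deterministic computation. The key algebraic fact that makes this substitution work is that $\gamma(H, L^2)$, being a Hilbert space, admits the operator ideal bound
\begin{equation*}
\|\one_E T \one_F G\|_{\gamma(H, L^2)} \leq \|\one_E T \one_F\|_{\mathcal{L}(L^2)} \|\one_F G\|_{\gamma(H, L^2)},
\end{equation*}
so that the $L^2$-$L^2$ off-diagonal decay of $K(t,s)$ transfers intact to the $\gamma$-norm appearing in the It\^o identity.

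First, I fix $x \in \R^d$ and $t > 0$, and decompose $g(s)$ spatially along the annuli $\{C_j(x, 4t^{1/2})\}_{j \geq 0}$ (or $\{C_j(x, 4s^{1/2})\}_{j \geq 0}$ for the diagonal block) and temporally along $(\bigcup_{k \geq 1} [2^{-k-1}t, 2^{-k}t]) \cup [t/2, t]$, producing stochastic analogues $I_{j,k}$ (for $s$ far from $t$) and $J_j$ (for $s \in [t/2, t]$) of the deterministic pieces. The (quasi-)triangle inequality in $T^{p,2}_{\sigma+2}$ reduces matters to a geometrically summable bound on each piece. For each off-diagonal piece, It\^o's isometry gives
\begin{equation*}
\E \Big\| \one_{B(x, t^{1/2})} \int_a^b (K(t,s) \otimes I_H)[\one_{C_j} g(s)] \, dW_H(s) \Big\|_{L^2}^2 = \E \int_a^b \|\one_{B(x, t^{1/2})} K(t,s) \one_{C_j} g(s)\|_{\gamma(H, L^2)}^2 \, ds,
\end{equation*}
and for $j \geq 1$ the $L^2$-$L^2$ off-diagonal decay of arbitrary order $m$ furnishes a factor $(1 + 4^j t/(t-s))^{-2m}$, exactly mirroring the deterministic estimate. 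For the diagonal piece $J_0$, where off-diagonal decay is unavailable, I invoke the assumed boundedness $\mathcal{S}_K : L^2(\Omega; T^{2,2}_{\sigma + d/2 + 1}(H)) \to L^2(\Omega; T^{2,2}_{\sigma + d/2 + 2})$ at the shifted smoothness $\sigma + d/2$, which absorbs the normalisation factor $t^{-d/2}$ arising from $\fint_{B(x, t^{1/2})}$.

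Taking $L^p(\R^d, dx)$-norms in $x$ and applying the change-of-aperture inequality \eqref{eq:change} contributes a factor $2^{jd/(p \wedge 2)}$ per piece, and the resulting sums in $j$ and $k$ converge exactly under the assumed threshold $p > \min(1, \frac{2d}{d + 2\sigma + 2})$. The shift from $\frac{2d}{d + 2\sigma + 4}$ (deterministic) to $\frac{2d}{d + 2\sigma + 2}$ (stochastic) reflects the fact that $\mathcal{S}_K$ only smooths by one unit of regularity (from $T^{p,2}_{\sigma + 1}(H)$ to $T^{p,2}_{\sigma + 2}$) rather than two; equivalently, at the level of the isometry, the factor $|b-a|$ contributed by Cauchy-Schwarz in the deterministic case is absent, costing one derivative. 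The main obstacle is to verify that the spatial cut-offs $\one_E, \one_F$ commute correctly with the progressively measurable stochastic integral and to justify the $\gamma$-ideal bound in the precise localized form required; once these technical points are in place, the rest of the argument follows the deterministic proof step by step.
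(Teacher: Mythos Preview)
Your overall plan matches the paper's proof: decompose into pieces $I_{j,k}$ and $J_j$, use off-diagonal decay for the $I_{j,k}$ and the assumed $T^{2,2}$-boundedness (at shifted smoothness $\sigma+\tfrac{d}{2}$) for $J_0$, then invoke change of aperture to sum. Your explanation of why the threshold moves from $\tfrac{2d}{d+2\sigma+4}$ to $\tfrac{2d}{d+2\sigma+2}$ (loss of the Cauchy--Schwarz factor $|b-a|$, i.e.\ of the prefactor $2^{-kp/2}$) is exactly what the paper says.

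There is, however, a gap in how you pass from the pointwise It\^o isometry to the $L^p(\Omega;T^{p,2})$-estimate. You apply the $L^2(\Omega)$-It\^o isometry on the Hilbert space $L^2(B(x,t^{1/2}))$ at fixed $(x,t)$, obtaining $\E\|\cdot\|_{L^2}^2$, and then propose to ``take $L^p(\R^d,dx)$-norms in $x$''. But the target quantity is
\[
\E\int_{\R^d}\Big(\int_0^\infty \fint_{B(x,t^{1/2})} |\mathcal{S}_K g(t,y)|^2\,dy\,\frac{dt}{t^{\sigma+3}}\Big)^{p/2}dx,
\]
that is, the expectation of an $L^{p/2}_x$-integral of the \emph{random} square function, not the $L^{p/2}_x$-integral of its expectation. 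Pointwise It\^o isometry only gives the latter, and the two are not comparable for $p>2$.

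The paper closes this gap by applying, \emph{before} any spatial decomposition, the two-sided $L^p$-It\^o isomorphism of Theorem~\ref{thm:UMD} together with a square-function identification of the $\gamma$-norm in the tent space (citing \cite[Proposition~6.1]{NVWco}). This yields directly
\[
\E\|\mathcal{S}_K g\|_{T^{p,2}_{\sigma+2}}^p \eqsim \E\int_{\R^d}\Big(\int_0^\infty \fint_{B(x,t^{1/2})} \int_0^t \|(K(t,s)\otimes I_H)g(s)(y)\|_H^2\,ds\,dy\,\frac{dt}{t^{\sigma+3}}\Big)^{p/2}dx.
\]
After this reduction the right-hand side is, pathwise, a deterministic tent-space expression in the $H$-valued function $(s,y)\mapsto g(s)(y)$, and one copies the proof of Proposition~\ref{prop:det-extra} with $\sigma$ replaced by $\sigma+1$, exactly along the lines you describe (undoing the equivalence once when handling $J_0$). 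Your $\gamma$-ideal bound and off-diagonal arguments are then correct. The fix is thus simply to replace ``It\^o's isometry on $L^2(B(x,t^{1/2}))$'' by the UMD It\^o isomorphism plus square-function estimate as the opening move.
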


\begin{proof}
Reasoning as in \cite[Proposition 5.1]{anp}, and using
the It\^o isomorphism for stochastic integrals
(Theorem \ref{thm:UMD}) as well as a
square function estimate \cite[Proposition 6.1]{NVWco}, we obtain
\begin{equation*}
\begin{aligned}
\ & \mathbb{E}\Big\|(t,x) \mapsto \int  _{0} ^t (K(t,s)\otimes I_{H})g(s,\cdot)(x)\,dW_{H}(s)\Big\|_{T^{p,2} _{\sigma+2}}^{p}
\\ & \quad \eqsim
\E \int  _{\R^{d}}
\Big(\int  _{0} ^{\infty} \fint_{B(x,t^\frac12)}
\int_0^t
\|(K(t,s)\otimes I_{H})[g(s,\cdot)](y)\|_{H} ^{2}\,ds \,dy\,\frac{dt}{t^{\sigma+3}} \Big)^{\frac{p}{2}} \,dx.
\end{aligned}
\end{equation*}
One can then copy the proof of Proposition \ref{prop:det-extra} with $\sigma$ replaced by $\sigma +1$, undoing the It\^o isometry when handling the $J_{0}$ term.
The only difference is that there is no factor $2^{-\frac{kp}{2}}$ in front of the $I_{j,k}$ term (as we use It\^o's isomorphism instead of Cauchy-Schwarz inequality). This is the reason why one needs $p>\min(1,\frac{2d}{d+2\sigma+2})$ rather than
$p>\min(1,\frac{2d}{d+2\sigma+4})$.
\end{proof}

\subsection{Deterministic time-dependent maximal regularity in $T^{p,2}_{\sigma}$.}

We consider the problem
\begin{equation}\label{eq:divform-det}
\left\{
  \begin{array}{ll}
  \partial_{t} u(t,x) - div\, a(t,.) \nabla u(t,x) & = f(t,x), \quad t \geq 0,x \in \R^{d},\\
    u(0) & = 0,
  \end{array}
\right.
\end{equation}
for $f \in T^{2,2}_{s} \cap T^{p,2}_{s}$.
Our goal is to derive a priori maximal regularity estimates for the solution given by
$$
u(t,.) = \int \limits _{0} ^{t} \Gamma(t,s)f(s,.)ds,
$$
where $\{\Gamma(t,s) \;;\; t>s\} \subset \mathcal{L}(L^{2}(\R^{d}))$ is Lions's evolution family.\\

To do so, we consider the maximal regularity operator
defined, for $f \in T^{2,2}_{-1} = L^{2}(\R_{+}\times \R^{d})$ by
$$
\mathcal{M}f(t,.). = \int \limits _{0} ^{t} \Gamma(t,s)f(s,.)ds
$$
Thanks to the uniform boundedness of $\Gamma(t,s) \in \mathcal{L}(L^{2}(\R^{d}))$ (see e.g. \cite[Lemma 3.14]{AMP15}), we have the following mapping properties in the $T^{2,2}_{\sigma}$ scale. See Remark \ref{rk:maximal?} for a discussion of the meaning of this notion of maximal regularity.

\begin{proposition}
\label{prop:HardyL2}
The operator $\mathcal{M}$, initially defined on $L^{2}(\R_{+}\times \R^{d})$, extends to a bounded linear operator from $T^{2,2}_{\sigma}$ to $T^{2,2}_{\sigma+2}$ for all $\sigma\geq 0$.
\end{proposition}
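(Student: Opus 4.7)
The plan is to exploit the identity $T^{2,2}_{\sigma}=L^{2}\big(\R_+\times\R^{d},\tfrac{dt\,dx}{t^{1+\sigma}}\big)$ (noted after the definition of tent spaces) and then reduce everything to a one-dimensional weighted Hardy inequality. The only analytic input from the PDE is the uniform $L^{2}$-boundedness of Lions's evolution family, namely the existence of $C>0$ such that $\|\Gamma(t,s)\|_{\mathcal{L}(L^{2}(\R^{d}))}\leq C$ for all $t>s\geq 0$ (this is a classical consequence of the energy estimates, see \cite[Lemma 3.14]{AMP15}).

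First, I would integrate the contraction estimate to obtain, for $f\in T^{2,2}_{-1}=L^{2}(\R_+\times\R^d)$,
\[
\|\mathcal{M}f(t,\cdot)\|_{L^{2}(\R^{d})} \leq C\int_{0}^{t}\|f(s,\cdot)\|_{L^{2}(\R^{d})}\,ds.
\]
Setting $F(s):=\|f(s,\cdot)\|_{L^{2}(\R^{d})}$ and $U(t):=\|\mathcal{M}f(t,\cdot)\|_{L^{2}(\R^{d})}$, the target bound $\|\mathcal{M}f\|_{T^{2,2}_{\sigma+2}}\lesssim \|f\|_{T^{2,2}_{\sigma}}$ becomes the scalar inequality
\[
\int_{0}^{\infty}U(t)^{2}\,\frac{dt}{t^{3+\sigma}} \lesssim \int_{0}^{\infty}F(s)^{2}\,\frac{ds}{s^{1+\sigma}}.
\]

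Second, I would verify this scalar inequality by Schur's test. Setting $g(s):=F(s)\,s^{-(1+\sigma)/2}$, the inequality is equivalent to the $L^{2}(ds)\to L^{2}(dt)$ boundedness of the integral operator with kernel
\[
K(t,s):= t^{-(3+\sigma)/2}\,s^{(1+\sigma)/2}\,\mathbf{1}_{\{0<s<t\}}.
\]
A direct computation, valid because $\sigma\geq 0$, gives
\[
\int_{0}^{t} K(t,s)\,ds = \frac{2}{3+\sigma},\qquad \int_{s}^{\infty} K(t,s)\,dt = \frac{2}{1+\sigma},
\]
so Schur's test provides an $L^{2}(ds)\to L^{2}(dt)$ bound independent of $f$. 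This gives the result on the dense subspace $L^{2}(\R_+\times\R^d)\cap T^{2,2}_{\sigma}$, and the extension to $T^{2,2}_{\sigma}$ follows by density.

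There is no real obstacle here: the uniform $L^{2}$-contractivity of $\Gamma(t,s)$ is exactly what collapses the two spatial variables into a pointwise-in-time estimate, after which the statement reduces to a standard weighted Hardy (Schur) inequality on $(0,\infty)$. The content of the proposition is really that, once one parameterises regularity through the weight $\sigma$ rather than through derivatives, Lions's energy-type bound immediately delivers ``$L^{2}(\tfrac{dt}{t^{\sigma}};L^{2})\to L^{2}(\tfrac{dt}{t^{2+\sigma}};L^{2})$'' maximal regularity without any additional structural assumption on $a$; this is precisely the observation that will be combined with Propositions \ref{prop:det-extra} and \ref{prop:stoch-extra} in the proof of Theorem \ref{thm:tent}.
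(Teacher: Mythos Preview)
Your proof is correct and follows the same strategy as the paper: use the uniform $L^{2}$-boundedness of $\Gamma(t,s)$ to reduce to a one-dimensional weighted Hardy-type inequality in the time variable. The only difference is in the execution of that scalar inequality: the paper splits $\mathcal{M}f$ into an integral over $(0,t/2)$ (handled by Cauchy--Schwarz and Fubini) and one over $(t/2,t)$ (handled by the classical Hardy inequality), whereas you treat the full kernel $t^{-(3+\sigma)/2}s^{(1+\sigma)/2}\mathbf{1}_{\{0<s<t\}}$ in one stroke via Schur's test. Your route is slightly cleaner and gives an explicit constant, but both arguments are essentially the same computation.
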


\begin{proof}
Let $f \in T^{2,2}_{\sigma}$. We have that $\mathcal{M}f=\mathcal{M}_{1}(f)+\mathcal{M}_{2}(f)$, where, for all $t>0$,
\begin{align*}
\mathcal{M}_{1}(f)(t,.) &= \int \limits _{0} ^{\frac{t}{2}} \Gamma(t,s)f(s)ds,\qquad
\mathcal{M}_{2}(f)(t,.) = \int \limits _{\frac{t}{2}} ^{t}\Gamma(t,s)f(s)ds.
\end{align*}
For $\mathcal{M}_{1}(f)$, we have the following
\begin{align*}
\|\mathcal{M}_{1}(f)\|_{T^{2,2}_{\sigma+2}} ^{2}
&= \int \limits _{0} ^{\infty} \|\int \limits _{0} ^{\frac{t}{2}}
\Gamma(t,s)f(s)ds\|_{2} ^{2} \frac{dt}{t^{\sigma +3}}\lesssim \int \limits _{0} ^{\infty} \int \limits _{0} ^{\frac{t}{2}}
\|\Gamma(t,s)f(s)\|^{2}_2 ds \frac{dt}{t^{\sigma +2}}\\
&\lesssim  \int \limits _{0} ^{\infty} \|f(s)\|_{2} ^{2} (\int \limits_{2s}  ^{\infty} \frac{dt}{t^{\sigma+2}})ds
\lesssim \int \limits _{0} ^{\infty} \|f(s)\|_{2} ^{2} \frac{ds}{s^{\sigma+1}}.
\end{align*}
To estimate $\mathcal{M}_{2}(f)$, we use Hardy inequality and the uniform boundedness of $\Gamma(t,s)$ as follows:
\begin{align*}
\|\mathcal{M}_{2}(f)\|_{T^{2,2}_{\sigma+2}} ^{2}
&= \int \limits _{0} ^{\infty} \|\int \limits _{\frac{t}{2}} ^{t}
\Gamma(t,s)f(s)ds\|_{2} ^{2} \frac{dt}{t^{\sigma +3}}
\lesssim
\int \limits _{0} ^{\infty} \big( \frac{1}{t}\int \limits _{\frac{t}{2}} ^{t}
\|\Gamma(t,s)(f(s)s^{-\frac{\sigma+1}{2}})\|_{2} ds\big) ^{2} dt\\
& \lesssim
\int \limits _{0} ^{\infty} \big( \frac{1}{t}\int \limits _{\frac{t}{2}} ^{t}
\|f(s)s^{-\frac{\sigma+1}{2}}\|_{2}ds \big) ^{2} dt
\lesssim \int \limits _{0} ^{\infty} \|f(s)\|_{2} ^{2} \frac{ds}{s^{\sigma+1}}.
\end{align*}
\end{proof}

\begin{theorem}
\label{thm:nonaut-det-tent}
Let $\sigma \geq 0$ and $p>\min(1,\frac{2d}{d+2\sigma+4})$
The deterministic maximal regularity operator $\mathcal{M}$ extends to a bounded linear operator from $T^{p,2}_{\sigma}$ to $T^{p,2}_{\sigma+2}$.
\end{theorem}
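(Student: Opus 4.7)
The plan is to apply Proposition~\ref{prop:det-extra} directly to the family $K(t,s):=\Gamma(t,s)$, so that $\mathcal{M}_K = \mathcal{M}$, and verify its three hypotheses. The scale of exponents $p > \min(1,\tfrac{2d}{d+2\sigma+4})$ that appears in the conclusion matches verbatim the one produced by Proposition~\ref{prop:det-extra}, so once the hypotheses are checked nothing further is required.

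First, the uniform $L^2$-boundedness $\sup_{t>s}\|\Gamma(t,s)\|_{\mathcal{L}(L^{2}(\R^{d}))}<\infty$ is a standard consequence of Lions's energy inequality and is recorded in \cite[Lemma~3.14]{AMP15}. Second, the $T^{2,2}_\sigma\to T^{2,2}_{\sigma+2}$ boundedness of $\mathcal{M}$ for every $\sigma\ge 0$ is exactly Proposition~\ref{prop:HardyL2}. The only substantive point is therefore the $L^2$–$L^2$ off-diagonal decay of any order for Lions's evolution family. This is a Davies--Gaffney type estimate: given Borel sets $E,F\subset\R^{d}$, $u\in L^2(\R^d)$, and $d:=d(E,F)>0$, one fixes $s$ and considers $v(r):=\Gamma(r,s)(\mathbf{1}_F u)$ for $r\in[s,t]$. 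Choosing a Lipschitz function $\rho$ with $\rho\equiv 0$ on $F$, $\rho\equiv d$ on $E$ and $|\nabla\rho|\le 1$, and a parameter $\xi>0$, one tests the weak formulation of $\partial_r v - \mathrm{div}(a(r,\cdot)\nabla v)=0$ against $e^{2\xi\rho}v$; using the uniform ellipticity and $\|a\|_\infty\le K$ one finds after integration in $r$ and optimisation in $\xi\sim d/(t-s)$ that
\[
\|\mathbf{1}_E\,\Gamma(t,s)(\mathbf{1}_F u)\|_{L^2}
\lesssim \exp\!\Bigl(-c\,\tfrac{d^{2}}{t-s}\Bigr)\,\|\mathbf{1}_F u\|_{L^2},
\]
which implies decay of any polynomial order $(1+\tfrac{d^{2}}{t-s})^{-m}$. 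This is entirely classical for measurable elliptic coefficients and is for instance used throughout \cite{AMP15}; the only care needed is that the Caccioppoli/Gaffney computation only uses $a\in L^\infty$ and uniform ellipticity, both of which are assumed here.

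With these three inputs, Proposition~\ref{prop:det-extra} applies and gives the boundedness of $\mathcal{M}:T^{p,2}_\sigma\to T^{p,2}_{\sigma+2}$ on the dense class $T^{2,2}_\sigma\cap T^{p,2}_\sigma$ for every $p>\min(1,\tfrac{2d}{d+2\sigma+4})$, and the extension by density concludes the proof. The main (and only) obstacle is verifying the Davies--Gaffney bound in the present time-dependent setting, but the standard cutoff/exponential-perturbation argument goes through because the spatial form associated with $a(t,\cdot)$ is elliptic and bounded uniformly in $t$.
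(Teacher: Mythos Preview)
Your proof is correct and follows exactly the paper's approach: verify the hypotheses of Proposition~\ref{prop:det-extra} (uniform boundedness, $T^{2,2}_\sigma\to T^{2,2}_{\sigma+2}$ boundedness via Proposition~\ref{prop:HardyL2}, and off-diagonal decay) and apply it. The only difference is that the paper simply cites \cite[Proposition~3.19]{AMP15} for the $L^2$--$L^2$ off-diagonal decay of $\{\Gamma(t,s)\}$, whereas you sketch the Davies--Gaffney exponential-perturbation argument yourself; both routes are valid and lead to the same conclusion.
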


\begin{proof}
The family $\{\Gamma(t,s) \;;\; t>s\}$ has $L^2 - L^2$ off-diagonal decay  of any order by \cite[Proposition 3.19]{AMP15}. The result thus follows from Proposition \ref{prop:HardyL2} and  Proposition \ref{prop:det-extra}.
\end{proof}

\begin{remark}
As in \cite{akmp,anp,AMP15}, we could also exploit $L^{p}-L^{2}$ off-diagonal decay for $p<2$ (and even $p=1$ if the coefficients are real valued). This would give a wider range of $p$ (and the full range $(1,\infty)$ if the coefficients are real valued). We leave this technical improvement for future work. \end{remark}

\subsection{Stochastic time-independent maximal regularity in $T^{p,2}_{\sigma}$.}
In this subsection we consider the problem
\begin{equation*}
\left\{
  \begin{array}{ll}
  dU(t) - \Delta U(t)\ud t & = g(t) \ud W_{H}(t), \\
    U(0) & = 0,
  \end{array}
\right.
\end{equation*}
where $\sigma \geq 0$, and $g$ is an adapted process in $L^{p}(\Omega;T^{p,2} _{\sigma}(H))$ such that
$\nabla g \in L^{p}(\Omega;T^{p,2} _{\sigma}(H^{d}))$.
Propositions 4.1 and 5.1(applied with the kernel
$K(t,s) = \exp((t-s)\Delta)div$)) from \cite{anp} give that the mild solution defined by
$$U(t) = \int \limits _{0} ^{t} \exp((t-s)\Delta)g(s) \ud W_{H}(s)$$
satisfies
\begin{equation}
\label{eq:anp}
\E\|\Delta U\|_{T^{p,2} _{\sigma}} ^{p} \lesssim \E\|\nabla g\|_{T^{p,2} _{\sigma}(H^{d})} ^{p},
\end{equation}
\begin{remark}
\label{rk:init}
As in \cite[Lemma 6.3]{anp}, we could add initial data $u_{0} \in L^{\frac{\beta}{2}}$ for appropriate values of $\beta$. To do so, one needs to modify the proof of \cite[Lemma 6.3]{anp} to control $L^{\frac{1}{2}}\exp(-tL)u_{0}$ instead of $\nabla \exp(-tL)u_{0}$.
\end{remark}

\subsection{Stochastic time-dependent maximal regularity in $T^{p,2}_{\sigma}$.}
We now combine the previous results to treat our main problem
\begin{equation*}
\left\{
  \begin{array}{ll}
  dU(t) +L(t) U(t)\ud t & = f(t)\ud t +  g(t) \ud W_{H}(t), \\
    U(0) & = 0,
  \end{array}
\right.
\end{equation*}
We choose not to include initial data, but could do so as indicated in the above remark.
To allow the approach used in Theorem \ref{thm:timedepSMR} to
work here, we need to make the following assumption on our $L(t,\omega)=-div a(t,\omega,.) \nabla$ operators:
\begin{assumption}
\label{ass}
There exists $C>0$ such that for all $t \geq 0$, all $\omega \in \Omega$, and all $u \in W^{2,2}(\R^{d})$,
$$
\|L(t,\omega)u\|_{L^{2}(\R^{d})} \leq C \|u\|_{W^{2,2}(\R^{d})}.$$
\end{assumption}

\begin{remark}
\label{rk:divfree}
Assumption \ref{ass} can be satisfied by coefficients that do not have any regularity in space or time. Indeed, it holds for all divergence free coefficients, i.e. coefficients such that $\sum \limits _{i=1} ^{d} a_{i,j}\partial_{i} = 0$ (almost surely) in the sense of distributions for all $j=1,..,d$.
This was first remarked (to the best of our knowledge) in \cite[Lemma 4.4]{ers}. Example of divergence free coefficients include, for $d=3$, matrices for which columns are of the form $curl F$ for some Lipschitz vector field $F$. Since Assumption \ref{ass} is also satisfied when the coefficients $a$ are Lipschitz continuous in space (by the product rule and Riesz transform boundedness), we have that Assumption \ref{ass} holds for all coefficients of the form $b+c$ where $b \in L^{\infty}(\Omega \times \R_{+};W^{1,\infty}(\R^{d}))$ and $c \in L^{\infty}(\Omega\times \R_{+} \times \R^{d})$ is divergence free.
\end{remark}

\begin{lemma}
\label{lem:od}
Under Assumption \eqref{ass}, we have that
$\{tL(t,\omega)(I-t\Delta)^{-1} \;,\; t>0\}$ has $L^{2}-L^{2}$ off-diagonal decay of any order, uniformly in $\omega \in \Omega$.
\end{lemma}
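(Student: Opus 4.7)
The plan is to combine the locality of $L(t,\omega)$ as a second-order differential operator in divergence form with the standard $L^{2}$-$L^{2}$ off-diagonal decay of any order for $\partial^{\alpha}(I-t\Delta)^{-1}$ (easily verified via Fourier analysis, with bounds $\|\one_{E}\partial^{\alpha}(I-t\Delta)^{-1}\one_{F}\|_{L^{2}\to L^{2}} \leq C_{m}t^{-|\alpha|/2}(1+d(E,F)^{2}/t)^{-m}$). A crucial technical point is that Assumption \ref{ass}, under the scale-invariant structure emphasised in Remark \ref{rk:divfree}, actually yields the \emph{homogeneous} bound $\|L(t,\omega)u\|_{L^{2}} \leq C\|\nabla^{2}u\|_{L^{2}}$ uniformly in $(t,\omega)$: in the divergence-free case this is immediate since there $L(t,\omega)u = -a_{ij}\partial_{i}\partial_{j}u$.

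The first step is uniform $L^{2}$-boundedness. Combining the homogeneous estimate with the Fourier multiplier bound $\|\nabla^{2}(I-t\Delta)^{-1}\|_{L^{2}\to L^{2}} \leq C/t$ gives $\|tL(t,\omega)(I-t\Delta)^{-1}\|_{L^{2}\to L^{2}} \leq C$ uniformly. This already disposes of the regime $d(E,F)^{2} \leq t$, since there $(1+d(E,F)^{2}/t)^{-m}\geq 2^{-m}$, so the required inequality follows from uniform boundedness alone (up to absorbing $2^{m}$ into $C_{m}$).

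In the opposite regime $d^{2} > t$ with $d := d(E,F)$, I would use a cutoff-and-locality argument. Choose a smooth cutoff $\psi$ with $\psi \equiv 1$ on $\{x: d(x,E) < d/4\}$, $\supp \psi \subset \{x: d(x,E) < d/2\}$, and $\|\nabla^{k}\psi\|_{\infty} \lesssim d^{-k}$ for $k=1,2$. Setting $f = \one_{F}u$ and $v = (I-t\Delta)^{-1}f \in W^{2,2}(\R^{d})$, the distribution $(1-\psi)v$ vanishes on the open neighborhood $\{x: d(x,E) < d/4\}$ of $E$, so the locality of $L(t,\omega)$ gives $\one_{E}L(t,\omega)((1-\psi)v) = 0$, hence
\[
\one_{E}\,tL(t,\omega)v \;=\; \one_{E}\,tL(t,\omega)(\psi v).
\]
Applying the homogeneous estimate to $\psi v$ and expanding $\nabla^{2}(\psi v)$ by Leibniz reduces the problem to controlling $d^{-|\beta|}\|\one_{\supp\psi}\partial^{\alpha-\beta}v\|_{L^{2}}$ for $|\alpha|=2$ and $\beta \leq \alpha$. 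Since $d(\supp\psi, F) \geq d/2$, the standard off-diagonal decay for $\partial^{k}(I-t\Delta)^{-1}$ gives $\|\one_{\supp\psi}\partial^{\alpha-\beta}v\|_{L^{2}} \leq C_{m}t^{-|\alpha-\beta|/2}(1+d^{2}/t)^{-m}\|f\|_{L^{2}}$, and collecting terms produces an overall prefactor $t\bigl(1+\sqrt{t}/d + t/d^{2}\bigr)(1+d^{2}/t)^{-m}$, which in the regime $t < d^{2}$ is bounded by $3(1+d^{2}/t)^{-m}$. The main obstacle is really just verifying the homogeneous form of Assumption \ref{ass} --- the rest is a clean combination of locality with standard resolvent off-diagonal estimates.
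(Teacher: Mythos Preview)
Your argument is correct and follows essentially the same strategy as the paper: a smooth cutoff near $E$, locality of $L(t,\omega)$, a Leibniz expansion, and the standard off-diagonal decay of $\partial^{\alpha}(I-t\Delta)^{-1}$. The only cosmetic difference is that the paper factors $tL(t,\omega)(\eta v) = \big[tL(t,\omega)(I-t\Delta)^{-1}\big](I-t\Delta)(\eta v)$ and invokes uniform boundedness of the bracketed operator (which, as you rightly flag, also tacitly requires the homogeneous scale-invariant form of Assumption~\ref{ass}), rather than applying the homogeneous bound directly to $\psi v$.
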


\begin{proof}
Let $E,F \subset \R^{d}$ be two Borel sets such that $d(E,F)>0$. Let $\eta \in C^{\infty}(\R^{d})$ be such that
$\eta(x)=1$ for all $x \in E$, $\eta(x) = 0$ for all $x \not \in \tilde{E}=\{y \in \R^{d} \;;\; d(y,E) \leq \frac{d(E,F)}{2}\}$, and $\|\nabla \eta\|_{\infty} \leq \frac{1}{d(E,F)}$.
Note that Assumption \ref{ass} implies that
$$
\underset{t\in \R_{+},\omega \in \Omega}{\sup}
\|tL(t,\omega)(I-t\Delta)^{-1}\|_{\mathcal{L}(L^{2}(\R^{d})}<\infty.
$$

For $u \in L^{2}(\R^{d})$, $t>0$, and $\omega \in \Omega$, we thus have that
\begin{align*}
\|1_{E} t & L(t,\omega) (I-t\Delta)^{-1}(1_{F}u)\|_{2}
= \|1_{E} t L(t,\omega)\eta (I-t\Delta)^{-1}(1_{F}u)\|_{2}
\\
& \leq \|tL(t,\omega) (I-t\Delta)^{-1}(I-t\Delta)(\eta(I-t\Delta)^{-1}(1_{F}u))\|_{2} \\
& \lesssim \|(I-t\Delta)(\eta(I-t\Delta)^{-1}(1_{F}u))\|_{2} \\
& = \|1_{\tilde{E}} (I-t\Delta)(\eta(I-t\Delta)^{-1}(1_{F}u))\|_{2}
\\
& \leq \|1_{\tilde{E}} (I-t\Delta)^{-1}(1_{F}u)\|_{2}
+\|1_{\tilde{E}} t\Delta(\eta(I-t\Delta)^{-1}(1_{F}u))\|_{2}
\end{align*}
Since $\{(I-t\Delta)^{-1} \;;\; t>0\}$ has $L^{2}-L^{2}$ off-diagonal decay of any order (see e.g. \cite{AKMc} or just use standard heat kernel bounds), we only need to consider the second term. From Leibnitz rule, we have that, for all $v \in D(L)$,
$$
\Delta (\eta v) =
\eta \Delta + 2\nabla \eta . \nabla v + div (v\nabla \eta).
$$
Therefore
\begin{align*}
\|1_{\tilde{E}} & t\Delta(\eta(I-t\Delta)^{-1}(1_{F}u))\|_{2} \\
& \lesssim
\|1_{\tilde{E}} t\Delta(I-t\Delta)^{-1}(1_{F}u)\|_{2}
+ \|\nabla \eta\|_{\infty} \|t\nabla (I-t\Delta)^{-1}(1_{F}u)\|_{2} \\
& \qquad \qquad
+ \|1_{\tilde{E}} tdiv ((\nabla {\eta})(I-t\Delta)^{-1}(1_{F}u))\|_{2}
\\ & \lesssim
\|1_{\tilde{E}} t\Delta(I-t\Delta)^{-1}(1_{F}u)\|_{2}
+ \frac{\sqrt{t}}{d(E,F)} \|\sqrt{t}\nabla (I-t\Delta)^{-1}(1_{F}u)\|_{2}\\
&\qquad \qquad
+ \|1_{\tilde{E}} tdiv((\nabla \eta)(I-t\Delta)^{-1}(1_{F}u))\|_{2}
\end{align*}
Since $\{t\Delta(I-t\Delta)^{-1} \;;\; t>0\}$ and $\sqrt{t}\nabla (I-t\Delta)^{-1}$ have $L^{2}-L^{2}$ off-diagonal decay of any order by \cite[Proposition 5.2]{AKMc} (or just heat kernel estimates), and since one can assume that $\frac{\sqrt{t}}{d(E,F)}\leq 1$ without loss of generality (if $\frac{\sqrt{t}}{d(E,F)}> 1$, the estimates follow directly from Assumption \ref{ass}), we only need to consider the last term. Using Leibnitz rule again, we have that
\begin{align*}
\|1_{\tilde{E}} & tdiv (\nabla {\eta}(I-t\Delta)^{-1}(1_{F}u))\|_{2}
\\ & \leq
\underset{i,j=1,...,d}{\max} \big( \|t(\partial_{i} \partial_{j}\eta)\|_{\infty} \|1_{\tilde{E}}(I-t\Delta)^{-1}(1_{F}u)\|_{2} \\
& \qquad \qquad
+ \|\sqrt{t}\partial_{j}\eta\|_{\infty} \|1_{\tilde{E}}\sqrt{t}\nabla(I-t\Delta)^{-1}(1_{F}u)\|_{2}\big),
\end{align*}
which concludes the proof since $\{(I-t\Delta)^{-1} \;;\; t>0\}$ and $\sqrt{t}\nabla (I-t\Delta)^{-1}$ have $L^{2}-L^{2}$ off-diagonal decay of any order by \cite[Proposition 5.2]{AKMc} (or just standard heat kernel bounds).
\end{proof}

We can now prove Theorem \ref{thm:tent}, which statement we recall here.

\begin{theorem}
Let  $\sigma \geq 0$ and $p>\min(1,\frac{2d}{d+2\sigma+2})$.
Let $f \in L^{p}(\Omega;T^{p,2}_{\sigma})$, and $g \in L^{p}(\Omega;T^{p,2}_{\sigma+1}(H))$ be an adapted process such that
$\nabla g \in L^{p}(\Omega;T^{p,2}_{\sigma}(H^{d}))$.
Under Assumption \eqref{ass}, we have that the solution process defined by
$$U(t,.) = \int \limits _{0} ^{t}
\Gamma(t,s)f(s)ds + \int \limits _{0} ^{t}
(\Gamma(t,s)\otimes I_{H})g(s)dW_{H}(s) \quad \forall t>0,$$
satisfies
$$
\mathbb{E}\|U\|_{T^{p,2}_{\sigma+2}}^{p}
\lesssim \E\|f\|_{T^{p,2}_{\sigma}} ^{p}
+\E\|g\|_{T^{p,2}_{\sigma+1}(H)} ^{p} + \E\|\nabla g\|_{T^{p,2}_{\sigma}(H^{d})} ^{p}.
$$
\end{theorem}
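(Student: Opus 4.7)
The plan is to mimic the structure of Theorem \ref{thm:timedepSMR}: by linearity, split $U = U^{\mathrm{det}} + U^{\mathrm{sto}}$ with $U^{\mathrm{det}}(t) = \int_0^t \Gamma(t,s) f(s)\, ds$ and $U^{\mathrm{sto}}(t) = \int_0^t (\Gamma(t,s)\otimes I_H) g(s)\, dW_H(s)$. The first piece is controlled pathwise by Theorem \ref{thm:nonaut-det-tent} and then taking $L^p(\Omega)$-norms, yielding $\E\|U^{\mathrm{det}}\|^p_{T^{p,2}_{\sigma+2}} \lesssim \E\|f\|^p_{T^{p,2}_\sigma}$. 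The real work is on the stochastic piece, and the plan is to let the heat semigroup $e^{t\Delta}$ play the role of the time-independent reference operator $A_0$ of Theorem \ref{thm:timedepSMR}.

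Let $V_1(t) = \int_0^t e^{(t-s)\Delta} g(s)\, dW_H(s)$ and $V_2 = U^{\mathrm{sto}} - V_1$. Subtracting the two It\^o equations shows that $V_2$ pathwise solves the deterministic Cauchy problem
\[dV_2 + L(t) V_2\, dt = -\bigl(L(t) + \Delta\bigr) V_1\, dt, \qquad V_2(0) = 0.\]
For $V_1$, equation \eqref{eq:anp} immediately gives $\E\|\Delta V_1\|^p_{T^{p,2}_\sigma} \lesssim \E\|\nabla g\|^p_{T^{p,2}_\sigma(H^d)}$. An application of Proposition \ref{prop:stoch-extra} with kernel $K(t,s) = e^{(t-s)\Delta}$---which has $L^2$--$L^2$ off-diagonal decay of any order, and whose $L^2$-level boundedness $T^{2,2}_{\sigma+1}(H)\to T^{2,2}_{\sigma+2}$ is direct from It\^o's isometry and Fubini---yields $\E\|V_1\|^p_{T^{p,2}_{\sigma+2}} \lesssim \E\|g\|^p_{T^{p,2}_{\sigma+1}(H)}$. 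Since $\nabla$ commutes with the heat semigroup, the same reasoning applied to $\nabla g$ gives analogous control of $\nabla V_1$ in $T^{p,2}_{\sigma+1}(H^d)$.

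Theorem \ref{thm:nonaut-det-tent} applied pathwise to $V_2$ then reduces the problem to bounding $\|(L(t) + \Delta) V_1\|_{T^{p,2}_\sigma}$. Since $\Delta V_1$ is already controlled, the decisive step---and the main anticipated obstacle---is the estimation of $\|L(\cdot) V_1\|_{T^{p,2}_\sigma}$ from Assumption \ref{ass} and the locality of $L(t)$. Given $(x,t)$, I would pick a smooth cutoff $\eta$ equal to $1$ on $B(x,\sqrt{t})$, supported in $B(x,2\sqrt{t})$ with $|\nabla^k \eta| \lesssim t^{-k/2}$, so that locality forces $\mathbf{1}_{B(x,\sqrt{t})} L(t) V_1 = \mathbf{1}_{B(x,\sqrt{t})} L(t)(\eta V_1)$. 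Assumption \ref{ass} then gives $\|L(t)(\eta V_1)\|_{L^2}^2 \lesssim \|\eta V_1\|^2_{W^{2,2}(\R^d)}$, and a Leibniz expansion together with the Calder\'on--Zygmund bound $\|\nabla^2 u\|_{L^2} \lesssim \|\Delta u\|_{L^2}$ on $\R^d$ produces the local pointwise-in-$(x,t)$ estimate
\[\fint_{B(x,\sqrt{t})}|L(t) V_1|^2 \lesssim \fint_{B(x,2\sqrt{t})}|\Delta V_1|^2 + \tfrac{1}{t} \fint_{B(x,2\sqrt{t})}|\nabla V_1|^2 + \tfrac{1}{t^2} \fint_{B(x,2\sqrt{t})}|V_1|^2.\]
Integrating against $t^{-1-\sigma}\, dt$, taking $L^p_x$-norms with the aperture-change inequality \eqref{eq:change}, and taking expectations yields
\[\E\|L(\cdot) V_1\|^p_{T^{p,2}_\sigma} \lesssim \E\|\Delta V_1\|^p_{T^{p,2}_\sigma} + \E\|\nabla V_1\|^p_{T^{p,2}_{\sigma+1}(H^d)} + \E\|V_1\|^p_{T^{p,2}_{\sigma+2}},\]
which---combined with the $V_1$ bounds above, the pathwise estimate of $V_2$ via Theorem \ref{thm:nonaut-det-tent}, and the contribution of $U^{\mathrm{det}}$---delivers the desired inequality. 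The hard part is this last localization step: matching the cutoff-derivative weights to the correct tent-space scales, and arranging the lower-order $L^2$-contribution produced by Assumption \ref{ass} so that it can be absorbed into the above three terms, which is precisely where Assumption \ref{ass} (rather than mere $L^\infty$ boundedness of $a$) becomes essential.
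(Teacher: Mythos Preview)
Your overall architecture---split off the heat-semigroup piece $V_1$, treat $V_2$ pathwise via Theorem \ref{thm:nonaut-det-tent}, and reduce everything to $\|L(\cdot)V_1\|_{T^{p,2}_\sigma}$---is exactly the paper's. The genuine difference is in that last step. The paper does not localize with cutoffs; instead it factorizes $L(t)V_1 = \bigl[tL(t)(I-t\Delta)^{-1}\bigr]\cdot t^{-1}(I-t\Delta)V_1$, proves in Lemma \ref{lem:od} that the bracketed family has $L^2$--$L^2$ off-diagonal decay of all orders, and then invokes a tent-space multiplier theorem (\cite[Theorem 5.2]{hnp}) to conclude $\|L(\cdot)V_1\|_{T^{p,2}_\sigma}\lesssim \|V_1\|_{T^{p,2}_{\sigma+2}}+\|\Delta V_1\|_{T^{p,2}_\sigma}$. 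This avoids your intermediate term $\|\nabla V_1\|_{T^{p,2}_{\sigma+1}}$ entirely. Your cutoff argument is more elementary---it bypasses both Lemma \ref{lem:od} and the external multiplier result---but controlling $\|\nabla V_1\|_{T^{p,2}_{\sigma+1}}$ via Proposition \ref{prop:stoch-extra} with the shifted index $\sigma-1$ needs $\sigma\ge 1$ as that proposition is stated, and even then yields the tighter constraint $p>\tfrac{2d}{d+2\sigma}$ rather than $p>\tfrac{2d}{d+2\sigma+2}$; so your route, as written, gives a narrower range of $(p,\sigma)$.

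Your worry about the ``lower-order $L^2$-contribution'' is well placed and is not merely a technicality: applying Assumption \ref{ass} verbatim to $\eta V_1$ produces an extra $\|\eta V_1\|_{L^2}^2$ term, which after averaging lands you in $\|V_1\|_{T^{p,2}_\sigma}$---a quantity not controlled by the data (it would require $g\in T^{p,2}_{\sigma-1}$). Your three displayed terms therefore do \emph{not} absorb it. The fix is to observe that for the intended class of coefficients one actually has the homogeneous bound $\|L(t)u\|_2\lesssim\|\Delta u\|_2$ (cf.\ Remark \ref{rk:divfree}), which removes the zeroth-order term; your argument then goes through cleanly. The paper's route has the same hidden reliance on homogeneity (uniform boundedness of $tL(t)(I-t\Delta)^{-1}$ fails for large $t$ under the inhomogeneous $W^{2,2}$ bound alone), so this is not a defect peculiar to your approach---but you should state the homogeneous estimate explicitly rather than hope to ``absorb'' a term that cannot be absorbed.
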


\begin{proof}
As in the proof of Theorem \ref{thm:timedepSMR}, we decompose $U$ as $U= V_{1} + V_{2}$, where
\begin{align*}
V_{1}(t,.) &=  \int \limits _{0} ^{t} \exp((t-s)\Delta) g(s)dW_{H}(s) \quad \forall t>0, \\
V_{2}(t,.) & = \int \limits _{0} ^{t} \Gamma(t,s)f(s)ds +
\int \limits _{0} ^{t} (L(s)-\Delta)V_{1}(s)ds \quad \forall t>0.
\end{align*}
Applying the deterministic estimate (Theorem \ref{thm:nonaut-det-tent}) pathwise, along with the stochastic time-independent estimate from \cite{anp} \eqref{eq:anp}, we have that
\begin{align*}
\E\|V_{2}\|_{T^{p,2}_{\sigma+2}} ^{p}
&\lesssim \E\|f\|_{T^{p,2}_{\sigma}} ^{p} +
\E\|\Delta V_{1}\|_{T^{p,2}_{\sigma}} ^{p}
+\E\|L(.)V_{1}\|_{T^{p,2}_{\sigma}} ^{p}\\
& \lesssim \E\|f\|_{T^{p,2}_{\sigma}} ^{p} +
\E\|\nabla g\|_{T^{p,2}_{\sigma}(H^{d})} ^{p}
+\E\|L(.)V_{1}\|_{T^{p,2}_{\sigma}} ^{p}.\\
\end{align*}
To estimate the last term, we use Lemma \ref{lem:od} and
\cite[Theorem 5.2]{hnp} (where the case $\sigma=0$ is treated but the proof extends verbatim to $\sigma>0$) to obtain
\begin{align*}
\E\|L(.)V_{1}\|_{T^{p,2}_{\sigma}} ^{p}
&\lesssim
\E\|(t,x) \mapsto tL(t)(I-t\Delta)^{-1}(t^{-1}-\Delta)V_{1}(t,x)\|_{T^{p,2}_{\sigma}} ^{p}\\
&\lesssim \E \|(t,x) \mapsto t^{-1}V_{1}(t,x)\|_{T^{p,2}_{\sigma}} ^{p} + \E\|\Delta V_{1}\|_{T^{p,2}_{\sigma}} ^{p}.
\end{align*}
To conclude the proof, we use \eqref{eq:anp} again to evaluate the last term, and treat the remaining term through an application of Propositions \ref{prop:det-extra} and \ref{prop:stoch-extra}. This is possible because $\{\exp((t-s)\Delta) \;;\; t>s\}$ has $L^{2}-L^{2}$ off-diagonal decay of any order, and because, by It\^o isometry,
\begin{align*}
\E\|(t,x) \mapsto & t^{-1} \int \limits _{0} ^{t} \exp((t-s)\Delta)g(s,.)(x)dW_{H}(s)\|_{T^{2,2}_{\sigma}} ^{2} \\\
& \eqsim \E \int \limits _{0} ^{\infty} \int \limits _{0} ^{t}
\|\exp((t-s)\Delta)g(s,.)\|_{L^2(\R^{d};H)} ^{2} ds \frac{dt}{t^{\sigma+3}} \\
& \leq  \E \int \limits _{0} ^{\infty} (\int \limits _{s} ^{\infty} \frac{dt}{t^{\sigma+3}})
\|g(s,.)\|_{L^2(\R^{d};H)} ^{2} ds \lesssim \|g\|_{T^{2,2}_{\sigma+1}(H)} ^{2}.
\end{align*}
We thus have that
\begin{align*}
\E\|L(.)V_{1}\|_{T^{p,2}_{\sigma}} ^{p}
\lesssim
\E\|g\|_{T^{p,2}_{\sigma+1}(H)}^{p} +
\E\|\nabla g\|_{T^{p,2}_{\sigma}(H^{d})} ^{p},
\end{align*}
which concludes the proof.
\end{proof}

\bibliographystyle{alpha}

\bibliography{literature}

\end{document}